\DeclareMathAlphabet{\mathpzc}{OT1}{pzc}{m}{it}
\newcommand{\EO}[1]{{\color{black}#1}}
\newcommand{\DQ}[1]{{\color{black}#1}}
\newcommand{\MS}[1]{{\color{black}#1}}
\newcommand{\TheTitle}{A bilinear pointwise tracking optimal control problem for a semilinear elliptic PDE}
\newcommand{\ShortTitle}{Bilinear pointwise tracking optimal control}
\newcommand{\TheAuthors}{E. Ot\'arola, D. Quero, M. Sasso}
\headers{\ShortTitle}{\TheAuthors}
\title{{\TheTitle}\thanks{EO is partially supported by ANID through FONDECYT grant 1220156. MS is supported by \MS{Direcci\'on de Postgrado of} UTFSM through Programa de Incentivos a la Investigaci\'on Cient\'ifica (PIIC) \MS{No. 005/2024} and by UV through Becas FIB\MS{-}UV \MS{2025}. }} 
\author{Enrique Ot\'arola\thanks{Departamento de Matem\'atica, Universidad T\'ecnica Federico Santa Mar\'ia, Valpara\'iso, Chile. \email{enrique.otarola@usm.cl}}
\and
Daniel Quero\thanks{Departamento de Matem\'atica, Universidad T\'ecnica Federico Santa Mar\'ia, Valpara\'iso, Chile \email{daniel.quero@alumnos.usm.cl}}
\and
Mat\'ias Sasso\thanks{Departamento de Matem\'atica, Universidad T\'ecnica Federico Santa Mar\'ia, Valpara\'iso, Chile \email{matias.sasso@sansano.usm.cl}}}
\date{Draft version of \today.}
\begin{document}

\maketitle
\begin{abstract}
We consider a bilinear optimal control problem with pointwise tracking for a semilinear elliptic PDE in two and three dimensions. The control variable enters the PDE as a (reaction) coefficient and the cost functional contains point evaluations of the state variable. These point evaluations lead to an adjoint problem with a linear combination of Dirac measures as a forcing term. In Lipschitz domains, we derive the existence of optimal solutions and analyze first and  necessary and sufficient second order optimality conditions. \EO{We also prove that every locally optimal control $\bar u$ belongs to $H^1(\Omega)$. Finally, assuming that the domain $\Omega \subset \mathbb{R}^2$ is \DQ{a} convex \DQ{polygon}, we prove that $\bar u \in C^{0,1}(\bar \Omega)$.}
\end{abstract}

\begin{keywords}
bilinear optimal control, semilinear equations, Dirac measures, first and second order optimality conditions, regularity estimates.
\end{keywords}

\begin{AMS}
35J61,   
49J20,   
49K20.   
35B65,   
49N60.   
\end{AMS}
\section{Introduction}
\label{sec:intro}
The aim of this work is to study optimality conditions and regularity estimates for an optimal control problem with pointwise tracking. The state equation corresponds to a semilinear elliptic partial differential equation (PDE) and the control variable enters the state equation as a reaction coefficient; constraints on the control variable are also considered. To make the discussion more precise, let $d \in \{2,3\}$, let $\Omega \subset \mathbb{R}^{d}$ be an open and bounded domain with Lipschitz boundary $\partial \Omega$, and let $\mathcal{D}$ be a finite ordered subset of $\Omega$. Given a set of desired states $\{y_{t}\}_{t \in \mathcal{D}}$ and a regularization parameter $\alpha > 0$, we introduce the cost functional
\begin{equation}\label{def:cost_func}
J(y,u) := \dfrac{1}{2}\sum_{t \in \mathcal{D}}(y(t) - y_{t})^{2} + \dfrac{\alpha}{2}\|u\|^{2}_{L^{2}(\Omega)}.
\end{equation}
Let $f \in H^{-1}(\Omega)$ be a fixed function. We are then interested in the following optimal control problem: Find $\min J(y,u)$ subject to the \emph{semilinear elliptic} PDE
\begin{equation}\label{def:state_eq}
-\Delta y + a(\cdot,y) + uy = f
\
\text{in } \Omega,
\qquad y = 0
\
\text{on } \partial \Omega,
\end{equation}
and the \emph{control constraints}
\begin{equation}\label{def:box_const}
u \in \mathbb{U}_{ad},
\qquad
\mathbb{U}_{ad}:= \{v \in L^{2}(\Omega): \mathtt{a} \leq v(x) \leq \mathtt{b} \text{ for a.e.}~x \in \Omega\}.
\end{equation}
The control bounds $\mathtt{a}$ and $\mathtt{b}$ are real numbers such that $-\infty < \mathtt{a} < \mathtt{b} < \infty$ and the nonlinear function $a: \Omega \times \mathbb{R} \rightarrow \mathbb{R}$ is a suitable Carath\'eodory function that satisfies the structural assumptions stated in section \ref{sec:assumptions} below.

The optimal control of systems governed by PDEs has important applications in various scientific fields \cite{MR2516528,MR271512,MR2583281,MR2839219}. In general, the mathematical modeling involves a control that appears as an additive term in the system, which is called \emph{additive control}. Based on the seminal work of J. L. Lions \cite{MR271512}, considerable efforts have been made in recent decades to study additive optimal control, and numerous mathematical and computational tools have been developed; see, for instance, \cite{MR2349487,MR2516528,MR2583281,MR3308473,MR4599941}. However, additive controls are not able to modify some of the key intrinsic properties of certain systems \cite{MR0414174,MR2641453}. For example, if we want to change the reaction rate in some chain reaction--like processes from chemistry, additive controls amount to controlling by adding or removing a certain amount of the reactants, which is not realistic. To solve this problem, it is useful to use specific catalysts to control the systems, which can be mathematically modeled by \emph{bilinear optimal control} \cite{MR0414174,MR2641453}.

In contrast to the additive case, the bilinear counterpart, also known as \emph{multiplicative control}, enters the state equation as a coefficient that interacts multiplicatively with the state variable. Several applications of bilinear optimal control can be found in the literature. In medicine, for example, bilinear controls can be used in modeling cancer treatments such as chemotherapy, where the drug dose (control) interacts with the cancer cell population (state) \cite{ledzewicz2004,Zerrik_etal,SERHAL2025104362}. Other applications include neutron transport in physics \cite{MR735811} and ecosystem management in ecology \cite{MR0332249}. 

Mathematically, bilinear optimal control can be formulated as in \eqref{def:state_eq}: The control variable $u$ enters the state equation as a coefficient and generates the nonlinear coupling $uy$, which exactly shows the bilinear structure of the control \cite{MR2536007,Casas_bilinear_1}. Compared to the additive case \cite{MR2583281}, the bilinear optimal control entails some additional difficulties. First, even in the linear case $a \equiv 0$ in \eqref{def:state_eq}, the control\DQ{-}to-state map is nonlinear, so the uniqueness of optimal solutions cannot be guaranteed \cite{MR2536007,Casas_bilinear_1}. As a result, the optimization problem is nonconvex and a complete optimization study requires the derivation of second order optimality conditions \cite{MR2536007,Casas_bilinear_1,Casas_bilinear_2}. Second, the sign of $u$ is not necessarily positive. Therefore, the derivation of the second-order conditions differs significantly from the classical case \cite{MR2583281,MR2902693,MR3586845}: several results concerning the well-posedness and differentiability properties of the control-to-state and \EO{control-to-adjoint state} maps are not standard \cite{Casas_bilinear_1}. We refer the reader to \cite{MR2536007,MR4416986,MR4679130,Casas_bilinear_1} for the analysis of some bilinear optimal control problems.

Our optimal control problem \eqref{def:cost_func}--\eqref{def:box_const} corresponds to a special case of bilinear optimal control. In particular, \eqref{def:cost_func}--\eqref{def:box_const} is characterized by a cost functional $J$ that contains point evaluations of the state variable. This structure leads to an adjoint problem with a linear combination of Dirac measures as a forcing term. As a consequence, and in contrast to the state equation, which can be posed naturally in $H^{1}_{0}(\Omega)$, the analysis of the adjoint problem must be performed in a less regular Sobolev space, for example, in $W^{1,r}_{0}(\Omega)$ ($r <  d/(d-1)$) as in \cite{MR0812624,MR3449612,MR3973329,MR4438718}. The latter complicates the analysis of the optimal control problem, especially when deriving optimality conditions and regularity estimates.

Apart from the fact that this presentation is the first one that studies a bilinear optimal control problem with pointwise tracking, the analysis itself entails a number of difficulties. To overcome them, we had to provide several results. Let us briefly discuss some of them:
\begin{itemize}[leftmargin=*]
\item \emph{The state equation}: We review the well-posedness of \eqref{def:state_eq} for controls $u$ that belong to the set $\mathcal{A}_0 \subset L^2(\Omega)$ defined in \eqref{eq:mathcal_A_0}. Note that the involved controls are not necessarily positive. We also derive regularity results and analyze differentiability properties of the underlying control-to-state map.
\item \emph{The adjoint equation}: We derive the well-posedness of the \emph{singular} adjoint problem for controls $u$ belonging to the set $\mathcal{A} \subset L^2(\Omega)$ defined in \eqref{eq:mod_mathcal_A}. We also analyze the differentiability properties of the underlying control-to-adjoint state map. Finally, under the assumption that $\Omega$ is \MS{a convex polytope}, we prove that the solution of the adjoint equation belongs to $H^2$ and $C^{0,1}$ away from the singular points.
\item \emph{Optimality conditions:} We derive first order and second order necessary and sufficient optimality conditions. To this end, we have adapted the arguments from the additive case to the bilinear scenario and also dealt with the non-smoothness of the corresponding adjoint state.
\item \emph{Regularity of locally optimal controls:} \MS{We prove that every locally optimal control $\bar{u}$ belongs to $H^1(\Omega)$. If $\MS{\Omega \subset \mathbb{R}^{2}}$ is a convex polygon, we prove that $\bar{u} \in C^{0,1}(\bar \Omega)$. This requires a precise understanding of the multiplication $\bar{y} \bar{p}$ near the singular points.} 
\end{itemize}
The structure of this manuscript is as follows. Section \ref{sec:notation_and_prel} establishes the main notation and assumptions that we use in this paper. In section \ref{sec:semilinear_eq}, we prove the existence and uniqueness of solutions for a weak formulation of the state equation \eqref{def:state_eq}, derive suitable regularity properties for the state variable, and analyze differentiability properties of the corresponding mapping $u \mapsto y$. In section \ref{sec:OCP}, we propose and analyze a weak formulation for the optimal control problem \eqref{def:cost_func}--\eqref{def:box_const}. More precisely, we show the existence of optimal solutions, analyze the adjoint problem and derive regularity estimates for its solution as well as first and second order optimality conditions. We conclude in section \ref{sec:loc_opt_control_reg} with regularity results for locally optimal controls.


\section{Notation and assumptions}
\label{sec:notation_and_prel}

In this section we present the main notation and assumptions under which we will work.


\subsection{Notation}\label{sec:notation}
 In the context of our work, $d\in \{2,3\}$ and $\Omega \subset \mathbb{R}^d$ is an open and bounded domain with Lipschitz boundary $\partial \Omega$. We will make further regularity requirements on $\Omega$ to perform regularity estimates.

For a Banach function space $\mathfrak{X}$, we denote the dual and the norm of $\mathfrak{X}$ by $\mathfrak{X}'$ and $\|\cdot\|_{\mathfrak{X}}$, respectively. Let $\mathfrak{Y}$ be another Banach function space. We write $\mathfrak{X}\hookrightarrow\mathfrak{Y}$ to denote that $\mathfrak{X}$ is continuously embedded in $\mathfrak{Y}$. We denote by $\langle \cdot, \cdot\rangle_{\mathfrak{X}',\mathfrak{X}}$ the duality pairing between $\mathfrak{X}'$ and $\mathfrak{X}$. If $\mathfrak{X}'$ and $\mathfrak{X}$ are clear from the context, we write $\langle \cdot, \cdot\rangle$. Let $\{ x_n \}_{n\in \mathbb{N}} \subset \mathfrak{X}$. We denote the strong, the weak, and the weak$^{\star}$ convergence of $\{ x_n \}_{n\in \mathbb{N}}$ to $x$ in $\mathfrak{X}$ as $n \uparrow \infty$ by $x_n \rightarrow x$, $x_n \rightharpoonup x$, and $x_{n} \mathrel{\ensurestackMath{\stackon[1pt]{\rightharpoonup}{\scriptstyle\ast}}} x$, respectively.

By $a \lesssim b$ we mean $a \leq C b$, with a constant $C>0$ that depends neither on $a$ nor on $b$. The value of $C$ might change at each occurrence. If the particular value of $C$ is important for our analysis, we will give it a name.

\subsection{Assumptions}\label{sec:assumptions}

To perform an analysis for our optimal control problem, we make three assumptions on $a$: \ref{A1}, \ref{A2}, and \ref{A3}. We emphasize that assumptions \ref{A1} and \ref{A3} are classical assumptions in the analysis of optimal control problems governed by semilinear elliptic PDEs \cite{inbook,MR2583281}. In contrast, the assumption \ref{A2} has recently been introduced in \cite{Casas_bilinear_1} in the context of bilinear optimal control.

\begin{enumerate}[label=(A.\arabic*)]
\item \label{A1} $a : \Omega \times \mathbb{R} \rightarrow \mathbb{R}$ is a Carath\'eodory function of class $C^{2}$ with respect to the second variable and $a(\cdot , 0) \in L^{2}(\Omega)$.
\item \label{A2} There exists a function $a_{0} \in L^{\infty}(\Omega)$ such that $\frac{\partial a}{\partial y}(x,y) \geq a_{0}(x)$ for a.e.~$x \in \Omega$ and for all $y \in \mathbb{R}$.
\item \label{A3} For all $\mathfrak{m}>0$, there exists a positive constant $C_{a,\mathfrak{m}}$ such that
\begin{equation*}
\sum_{i=1}^{2} \left| \frac{\partial^{i} a}{\partial y^{i}}(x,y) \right| \leq C_{a,\mathfrak{m}},
\qquad
\left| \frac{\partial^{2} a}{\partial y^{2}}(x,v) - \frac{\partial^{2} a}{\partial y^{2}}(x,w) \right| \leq C_{a,\mathfrak{m}}\left| v-w\right|
\end{equation*}
for a.e.~$x \in \Omega$ and $y,v,w \in [-\mathfrak{m},\mathfrak{m}]$.
\end{enumerate}


\section{The state equation}\label{sec:semilinear_eq}

In this section we analyze the state equation \eqref{def:state_eq}. For this purpose, following the ideas developed in \cite{Casas_bilinear_1}, we introduce the set
\begin{equation}
\mathcal{A}_{0} := \{ u \in L^{2}(\Omega): a_{0}(x)+ u(x) \geq 0~\text{for a.e.}~x\in \Omega \},
\label{eq:mathcal_A_0}
\end{equation}
where $a_0$ is as in \ref{A2}. Given $f \in L^{q}(\Omega)$ for some $q > d/2$, we introduce the following weak formulation of the state equation \eqref{def:state_eq}: Find $y \in H_{0}^{1}(\Omega)$ such that
\begin{equation}\label{eq:weak_semi_bilinear_eq}
\int_{\Omega} \nabla y \cdot \nabla v \mathrm{d}x + \int_{\Omega} a(\cdot,y) v \mathrm{d}x 
+ \int_{\Omega} u y v \mathrm{d}x = \int_{\Omega} fv \mathrm{d}x
    \quad \forall v \in H_{0}^{1}(\Omega).
\end{equation} 

The well-posedness of the weak problem \eqref{eq:weak_semi_bilinear_eq} is as follows. We note that, contrary to the usual assumptions $u \in L^{\infty}(\Omega)$ and $u \geq 0$ a.e.~in $\Omega$ (see, for instance, \cite[Assumption 4.2 (ii)]{MR2583281}), we have that $u \in L^2(\Omega)$ and that $a_0 + u \geq 0$ a.e.~in $\Omega$.

\begin{theorem}[well-posedness]
Let us assume that \ref{A1}--\ref{A3} hold. Given $u \in \mathcal{A}_0$ and $f \in L^{q}(\Omega)$ for some $q > d/2$, there exists a unique solution $y \in H_0^1(\Omega) \cap L^{\infty}(\Omega)$ for problem \eqref{eq:weak_semi_bilinear_eq}. In addition, we have the following stability bounds
\begin{equation}
\label{eq:stability_weak_problem}
\| \nabla y\|_{L^{2}(\Omega)} \leq \| f-a(\cdot,0) \|_{H^{-1}(\Omega)},
\qquad
\|y\|_{L^{\infty}(\Omega)} \lesssim \| f-a(\cdot,0) \|_{L^{q}(\Omega)},
\end{equation}
where the hidden constant in the $L^{\infty}(\Omega)$-estimate is independent of $y$, $a$, and $f$.
\label{thm:well-posedness-semilinearPDE}
\end{theorem}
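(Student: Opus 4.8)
The plan is to exploit the monotone structure that assumption \ref{A2} and the definition of $\mathcal{A}_0$ jointly provide. Writing $g := f - a(\cdot,0) \in L^q(\Omega)$ and introducing $b(x,s) := a(x,s) - a(x,0) + u(x) s$, assumption \ref{A2} together with $u \in \mathcal{A}_0$ yields $\partial_s b(x,s) = \partial_y a(x,s) + u(x) \geq a_0(x) + u(x) \geq 0$ for a.e. $x$ and all $s$, so $b(x,\cdot)$ is nondecreasing with $b(x,0) = 0$. The weak problem \eqref{eq:weak_semi_bilinear_eq} thus reads $-\Delta y + b(\cdot,y) = g$, and all of existence, the stability bounds, and uniqueness will follow from this monotone form. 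Before anything else I would record that the trilinear term $\int_\Omega u y v\,\mathrm{d}x$ is well defined on $H_0^1(\Omega)$: since $u \in L^2(\Omega)$ and $H_0^1(\Omega) \hookrightarrow L^6(\Omega)$ (resp. $\hookrightarrow L^p(\Omega)$ for every $p<\infty$ when $d=2$), Hölder's inequality gives $uy \in L^{6/5}(\Omega) \hookrightarrow H^{-1}(\Omega)$, so the associated operator maps $H_0^1(\Omega)$ into $H^{-1}(\Omega)$.

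For the $H^1$ estimate I would test the equation with $v = y$. The mean value theorem and \ref{A2} give $[a(\cdot,y) - a(\cdot,0)]y + u y^2 \geq (a_0 + u) y^2 \geq 0$, whence
\[
\|\nabla y\|_{L^2(\Omega)}^2 \leq \langle f - a(\cdot,0), y\rangle \leq \|f - a(\cdot,0)\|_{H^{-1}(\Omega)}\,\|\nabla y\|_{L^2(\Omega)},
\]
which is the first bound in \eqref{eq:stability_weak_problem}. Existence cannot be obtained by working directly in $H_0^1(\Omega)$, because \ref{A3} controls the derivatives of $a$ only on bounded sets; I would therefore first truncate: for $k > 0$ replace $a$ by a function $a_k$ that coincides with $a$ on $[-k,k]$ and is extended affinely outside, preserving both $a_k(\cdot,0) = a(\cdot,0)$ and the monotone structure (the affine slopes $\partial_y a(\cdot,\pm k) \geq a_0$). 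The resulting operator $y \mapsto -\Delta y + a_k(\cdot,y) + uy$ is bounded, monotone, coercive, and hemicontinuous from $H_0^1(\Omega)$ to $H^{-1}(\Omega)$, so the Browder--Minty theorem furnishes a unique $y_k \in H_0^1(\Omega)$ solving the truncated problem.

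The $L^\infty$ bound, and with it the removal of the truncation, is where the monotone form pays off. Fix $k \geq 0$ and test the truncated equation with $(y_k - k)^+$. On the set $\{y_k > k\}$ one has $y_k > k \geq 0$, hence $b_k(x,y_k) \geq b_k(x,0) = 0$, so the nonlinear term has the right sign and drops out, leaving
\[
\|\nabla (y_k - k)^+\|_{L^2(\Omega)}^2 \leq \int_{\{y_k > k\}} |g|\,(y_k - k)\,\mathrm{d}x.
\]
A standard Stampacchia iteration (Sobolev embedding together with Hölder, using $q > d/2$) then yields $\|y_k\|_{L^\infty(\Omega)} \leq M := C\|f - a(\cdot,0)\|_{L^q(\Omega)}$ with $C$ depending only on $\Omega$, $d$, and $q$; applying the same argument to $-y_k$ controls the lower bound. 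Crucially this constant is independent of $u$ and of the truncation level $k$. Choosing $k > M$ makes the truncation inactive ($|y_k| \leq M < k$), so $a_k(\cdot,y_k) = a(\cdot,y_k)$ and $y_k$ solves the original problem, giving both existence and the second bound in \eqref{eq:stability_weak_problem}. Uniqueness follows by subtracting two solutions, testing with their difference, and invoking \ref{A2} and $a_0 + u \geq 0$ exactly as in the $H^1$ estimate.

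I expect the main obstacle to be twofold: handling the merely local Lipschitz control on $a$ from \ref{A3} (resolved by the truncation, whose consistency with the uniform $L^\infty$ bound must be verified), and arranging the Stampacchia argument so that $M$ depends only on $\|f - a(\cdot,0)\|_{L^q(\Omega)}$ and not on the $L^2$-coefficient $u$. The latter is precisely what the reformulation through $b(x,s)$, with $b(x,0)=0$ and $b(x,\cdot)$ nondecreasing, is designed to achieve.
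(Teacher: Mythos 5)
Your proposal is correct and follows essentially the same route as the paper: a monotone reformulation with a nonlinearity vanishing at zero, a truncation to handle the merely local bounds in \ref{A3}, existence and uniqueness for the truncated problem via the Browder--Minty theorem, a truncation-independent $L^{\infty}$ bound (which the paper obtains by citing standard Stampacchia-type results rather than spelling out the iteration), and removal of the truncation plus uniqueness by monotonicity. The only cosmetic differences are that you fold $u\,s$ into the monotone term $b$ and extend $a$ affinely, whereas the paper sets $b := a - a(\cdot,0) - a_0 y$, keeps $(u+a_0)y$ as a separate coercive linear term, and truncates $b$ by constants.
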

\begin{proof}
We begin the proof by noting that the assumptions \ref{A1}--\ref{A3} guarantee the following property: for all $\mathfrak{m}>0$, $|a(x,y)| \leq C_{a,\mathfrak{m}} \mathfrak{m} + |a(x,0)| =: \psi_{\mathfrak{m}}(x)$ for a.e.~$x \in \Omega$ and for all $|y| \leq \mathfrak{m}$. Note that for all $\mathfrak{m}>0$,  $\psi_{\mathfrak{m}}$ belongs to $L^q(\Omega)$ for some $q>d/2$. Next, as in the proof of \cite[Theorem 2.4]{Casas_bilinear_1}, we introduce the function
\[
 b : \Omega \times \mathbb{R} \rightarrow \mathbb{R}:
 \qquad
 (x,y) \mapsto b(x,y):= a(x,y) - a(x,0) - a_0(x)y.
\]
The function $b$ satisfies the following three properties. First, $b(x,0) = 0$ for a.e.~$x \in \Omega$. Second, $\partial b/ \partial y (x,y) = \partial a/ \partial y (x,y) - a_0(x) \geq 0$ for a.e.~$x \in \Omega$ and for all $y \in \mathbb{R}$. Third, for all $\mathfrak{m}>0$, $|b(x,y)| \leq ( C_{a,\mathfrak{m}} + |a_0(x)|) \mathfrak{m} =: \chi_{\mathfrak{m}}(x)$ for a.e.~$x\in \Omega$ and for all $|y| \leq \mathfrak{m}$. For all $\mathfrak{m}>0$, the function $\chi_{\mathfrak{m}}$ belongs to $L^{\infty}(\Omega)$. With the function $b$ in hand, we rewrite the problem \eqref{eq:weak_semi_bilinear_eq} as follows: Find $y \in H_0^1(\Omega)$ such that, for every $v \in H_0^1(\Omega)$,
\begin{equation}\label{eq:weak_semi_bilinear_eq_b}
    \int_{\Omega}\nabla y  \cdot \nabla v \mathrm{d}x + \int_{\Omega} b(\cdot,y)v \mathrm{d}x  + \int_{\Omega}[u + a_0] yv \mathrm{d}x = \int_{\Omega}[f - a(\cdot,0)]v \mathrm{d}x.
\end{equation}
Let us now analyze the problem \eqref{eq:weak_semi_bilinear_eq_b}. For this purpose, we introduce a truncation $b_k$ of the function $b$ as follows: For an arbitrary $k>0$, we define, for a.e.~$x \in \Omega$,
\[
b_k(x,y) = b(x,k) 
\textrm{ if } y > k,
\, \,
b_k(x,y) = b(x,y) 
\textrm{ if } |y| \leq k,
\, \,
b_k(x,y) = b(x,-k) 
\textrm{ if } y < - k.
\]
We note that $|b_k(x,y)| \leq \chi_k(x)$ for a.e.~$x \in \Omega$ and for all $y \in \mathbb{R}$, where $\chi_k \in L^{\infty}(\Omega)$. With the truncation $b_{k}$ at hand, we introduce the following weak problem: Find $y_k \in H_0^1(\Omega)$ such that, for every $v \in H_0^1(\Omega)$,
\begin{equation}\label{eq:weak_semi_bilinear_eq_b_k}
    \int_{\Omega}\nabla y_k \cdot \nabla v \mathrm{d}x + \int_{\Omega}b_k(\cdot,y_k)v \mathrm{d}x + \int_{\Omega}[u + a_0] y_kv \mathrm{d}x = \int_{\Omega}[f - a(\cdot,0)]v \mathrm{d}x.
\end{equation}
Define $\mathfrak{A}: H_0^1(\Omega) \rightarrow H^{-1}(\Omega)$ as
$
 \langle \mathfrak{A}(w),v \rangle :=  \int_{\Omega} \nabla w \cdot \nabla v \mathrm{d}x + \int_{\Omega} (u + a_0) w v \mathrm{d}x
$
for every $v \in H_0^1(\Omega)$. The linear map $\mathfrak{A}$, which is related to the linear part of \eqref{eq:weak_semi_bilinear_eq_b_k}, is continuous and coercive in $H_0^1(\Omega)$. In fact, for every 
$w \in H_0^1(\Omega)$, we have
\begin{equation*}
\| \mathfrak{A}(w) \|_{H^{-1}(\Omega)}
\leq
\left[ 1
+
C_{4 \hookrightarrow 2}^2 \| u + a_0 \|_{L^2(\Omega)} \right]
\| \nabla w \|_{L^2(\Omega)},
\quad
\langle \mathfrak{A}(w),w \rangle
\geq  \| \nabla w \|^2_{L^2(\Omega)},
\end{equation*}
where $C_{4 \hookrightarrow 2}$ denotes the best constant in $H_0^1(\Omega) \hookrightarrow L^4(\Omega)$. To derive the coercivity property we have used that $u \in \mathcal{A}_0$ so that $a_0 + u \geq 0$ a.e.~in $\Omega$. We now define
\begin{equation}
 \mathfrak{B}_k: H_0^1(\Omega) \rightarrow H^{-1}(\Omega),
 \qquad
 \langle \mathfrak{B}_k(w),v \rangle :=
 \langle \mathfrak{A}(w),v \rangle + \int_{\Omega}b_k(\cdot,w)v \mathrm{d}x.
\end{equation}
If we proceed as in the proof of \cite[Theorem 4.4]{MR2583281}, we can prove that $\mathfrak{B}_k$ is \emph{strongly monotone}, \emph{coercive}, and \emph{hemicontinuous} in $H_0^1(\Omega)$. Thus, an application of the main theorem on monotone operators \cite[Theorem 2.18]{MR3014456} yields the existence and uniqueness of $y_k \in H_0^1(\Omega)$, which solves \eqref{eq:weak_semi_bilinear_eq_b_k}. A stability estimate in $H_0^1(\Omega)$ follows from setting $v = y_k$ in \eqref{eq:weak_semi_bilinear_eq_b_k}. The bound $\| y_k \|_{L^{\infty}(\Omega)} \leq c_{\infty} \| f-a(\cdot,0) \|_{L^{q}(\Omega)}$ follows from the arguments developed in the proof of \cite[Theorem 4.5]{MR2583281}; see also \cite[Theorem B.2]{MR1786735} ($c_{\infty}$ is independent of $k$). Given this bound, we can deduce that $b_k(x,y_k(x)) = b(x,y_k(x))$ for $k$ sufficiently large and for a.e.~$x \in \Omega$. Consequently, $y_k$ solves the original problem \eqref{eq:weak_semi_bilinear_eq_b}. The uniqueness of solutions follows from \ref{A2} and the monotonicity of $b$.
\end{proof}

\subsection{Regularity results}
The following result shows that we can expect better Sobolev regularity properties when $f$ is slightly smoother.

\begin{theorem}[Sobolev regularity]\label{thm:cont_reg}
Let the assumptions of Theorem~\ref{thm:well-posedness-semilinearPDE} hold. If, in addition, $f\in L^{2}(\Omega)$, then there exists $\kappa > 4$ for $d = 2$ and $\kappa > 3$ for $d = 3$, so that the solution of \eqref{eq:weak_semi_bilinear_eq} belongs to $W^{1,\kappa}(\Omega) \cap C^{0,\varsigma}(\bar{\Omega})$. We also have the bound
\begin{equation}
\|\nabla y\|_{L^{\kappa}(\Omega)} 
+
\|y\|_{C^{0,\varsigma}(\bar \Omega)} 
\lesssim \|f - a(\cdot,0)\|_{L^{2}(\Omega)}\left( 1 + \| u \|_{L^2(\Omega)} \right),
\label{eq:regularity_bound_W1k}
\end{equation}
where $\varsigma$ is such that $0 < \varsigma \leq 1 - d/\kappa < 1$.
\label{thm:W1kappa-regularity}
\end{theorem}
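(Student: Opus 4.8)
The plan is to recast the semilinear problem as a linear Poisson problem with an $L^2$ right-hand side and then invoke the sharp $W^{1,\kappa}$ regularity theory for the Dirichlet Laplacian on Lipschitz domains. First I would freeze the nonlinearity: rewrite the weak formulation \eqref{eq:weak_semi_bilinear_eq} as
\[
\int_\Omega \nabla y \cdot \nabla v \, \mathrm{d}x = \int_\Omega g v \, \mathrm{d}x \quad \forall v \in H_0^1(\Omega), \qquad g := f - a(\cdot, y) - uy,
\]
so that $y$ is the weak solution of $-\Delta y = g$ in $\Omega$ subject to homogeneous Dirichlet data. Since $2 > d/2$ for $d \in \{2,3\}$, Theorem \ref{thm:well-posedness-semilinearPDE} applies with $q = 2$ and provides $y \in L^\infty(\Omega)$ together with $\|y\|_{L^\infty(\Omega)} \lesssim \|f - a(\cdot,0)\|_{L^2(\Omega)} =: \mathfrak{m}$.

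The next step would be to show that $g \in L^2(\Omega)$ with a bound of the advertised form. I would split $a(\cdot, y) = [a(\cdot, y) - a(\cdot, 0)] + a(\cdot, 0)$ and use the mean value theorem together with \ref{A3} to obtain $|a(\cdot, y) - a(\cdot, 0)| \leq C_{a,\mathfrak{m}} |y| \leq C_{a,\mathfrak{m}}\|y\|_{L^\infty(\Omega)}$ a.e.~in $\Omega$; since $a(\cdot, 0) \in L^2(\Omega)$ by \ref{A1} and $\|uy\|_{L^2(\Omega)} \leq \|y\|_{L^\infty(\Omega)} \|u\|_{L^2(\Omega)}$, collecting terms and inserting the $L^\infty$ bound yields
\[
\|g\|_{L^2(\Omega)} \lesssim \|f - a(\cdot, 0)\|_{L^2(\Omega)}\bigl(1 + \|u\|_{L^2(\Omega)}\bigr).
\]

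With $g \in L^2(\Omega)$ in hand, I would invoke the Dirichlet regularity theory for the Laplacian on bounded Lipschitz domains (Jerison--Kenig and its refinements): there is an exponent $\kappa$, with $\kappa > 4$ if $d = 2$ and $\kappa > 3$ if $d = 3$, for which $-\Delta$ is an isomorphism from $W_0^{1,\kappa}(\Omega)$ onto $W^{-1,\kappa}(\Omega)$; this is precisely where the Lipschitz hypothesis on $\partial\Omega$ enters. To feed our datum into this result one checks the embedding $L^2(\Omega) \hookrightarrow W^{-1,\kappa}(\Omega)$, equivalently $W_0^{1,\kappa'}(\Omega) \hookrightarrow L^2(\Omega)$ with $\kappa' = \kappa/(\kappa-1)$; a Sobolev-exponent computation shows this holds for every $\kappa < \infty$ when $d = 2$ and for $\kappa \leq 6$ when $d = 3$, so one selects $\kappa$ in the (nonempty) overlap of the two ranges. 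Hence $y \in W_0^{1,\kappa}(\Omega)$ with $\|\nabla y\|_{L^\kappa(\Omega)} \lesssim \|g\|_{W^{-1,\kappa}(\Omega)} \lesssim \|g\|_{L^2(\Omega)}$, and the estimate of the previous step closes the $W^{1,\kappa}$ bound. Finally, since $\kappa > d$ in both cases, Morrey's embedding $W^{1,\kappa}(\Omega) \hookrightarrow C^{0,\varsigma}(\bar\Omega)$ with $\varsigma = 1 - d/\kappa \in (0,1)$ upgrades this to the Hölder estimate in \eqref{eq:regularity_bound_W1k}.

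The principal difficulty is not the algebra of the $L^2$ estimate but pinning down the admissible exponent $\kappa$: its value is dictated by the Lipschitz geometry through the Jerison--Kenig theory, and one must simultaneously respect the lower bound coming from that theory ($\kappa > 4$, resp.~$\kappa > 3$) and the upper bound needed so that the $L^2$ datum lands in $W^{-1,\kappa}(\Omega)$ (no constraint when $d = 2$, but $\kappa \leq 6$ when $d = 3$). Verifying that these two windows overlap, and that $\kappa > d$ so that Morrey applies, is the technical heart of the argument.
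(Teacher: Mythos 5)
Your proposal is correct and follows essentially the same route as the paper's proof: rewrite the equation as $-\Delta y = g$ with $g = f - a(\cdot,y) - uy$, bound $\|g\|_{L^2(\Omega)}$ using the $L^\infty$ bound on $y$ together with \ref{A1} and \ref{A3}, embed $L^2(\Omega)$ into $W^{-1,\kappa}(\Omega)$, apply the Jerison--Kenig Lipschitz regularity theory (the paper's citation of \cite[Theorem 0.5]{MR1331981}), and conclude with the Sobolev--Morrey embedding. Your explicit verification of the exponent window (any $\kappa<\infty$ for $d=2$, $\kappa\leq 6$ for $d=3$) makes precise what the paper compresses into the remark that ``$\kappa$ may be further restricted if necessary.''
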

\begin{proof}
We first note that \ref{A1}--\ref{A3} and $f \in L^{2}(\Omega)$ together with the fact that $y \in L^{\infty}(\Omega)$ (cf. Theorem \ref{thm:well-posedness-semilinearPDE}) allow us to prove that $g:= f - a(\cdot,y) - uy \in L^2(\Omega)$ and 
\begin{multline} 
\| g \|_{L^2(\Omega)} 
\leq 
\|  f - a(\cdot,0) \|_{L^2(\Omega)} 
+ 
\| a(\cdot,y) - a(\cdot,0) \|_{L^2(\Omega)} 
+
\|  uy \|_{L^2(\Omega)}
\\
\lesssim 
\|  f - a(\cdot,0) \|_{L^2(\Omega)} \left[ 1 + C_{a,\mathfrak{m}} + \|  u \|_{L^2(\Omega)} \right], 
\qquad 
\mathfrak{m} = \| y \|_{L^{\infty}(\Omega)}.
\label{eq:estimate_for_g}
\end{multline}
We have also used that $\|y\|_{L^{\infty}(\Omega)} \lesssim \| f-a(\cdot,0) \|_{L^{2}(\Omega)}$. We now perform a simple calculation based on the definition of negative Sobolev norms and standard Sobolev embeddings to conclude that $g \in W^{-1,\kappa}(\Omega)$ for some $\kappa > 4$ when $d=2$ and for some $\kappa > 3$ when $d=3$. The desired bound for $\|\nabla y\|_{L^{\kappa}(\Omega)}$ results from the application of \cite[Theorem 0.5]{MR1331981} ($\kappa$ may be further restricted if necessary). Finally, invoking \cite[Theorem 4.12, Part II]{adams2003sobolev}, we obtain that $y \in C^{0,\varsigma}(\bar{\Omega})$ along with the desired bound.
\end{proof}

\begin{remark}[H\"older regularity]
 \EO{Let the assumptions of Theorem~\ref{thm:well-posedness-semilinearPDE} hold. In the case where $f\in L^{q}(\Omega)$ and $a(\cdot,0) \in L^{q}(\Omega)$, for some $q>d/2$, the arguments used in the proof of Theorem~\ref{thm:cont_reg} allow us to derive the following regularity results: there exists $\mathtt{k} > 2$ for $d = 2$ and $\mathtt{k} > 3$ for $d = 3$ such that $y \in W^{1,\mathtt{k}}(\Omega) \cap H_0^1(\Omega)$. Since $\mathtt{k} > d$, an immediate application of \cite[Theorem 4.12, Part II]{adams2003sobolev} shows that $y \in C^{0,\nu}(\bar \Omega)$, where $\nu$ satisfies $0 < \nu \leq 1 - d/\mathtt{k} < 1$. In Theorem~\ref{thm:cont_reg}, however, we have assumed that $f \in L^2(\Omega)$. The fact that $a(\cdot,0) \in L^2(\Omega)$ follows from assumption \ref{A1}. These stronger regularity assumptions on the data are necessary to obtain the $H^2(\Omega)$-regularity result derived in Theorem~\ref{thm:H^2-reg-y} below, which is essential for analyzing finite element schemes \cite[Remark 2.4]{Casas_bilinear_2}. We emphasize that in this work, our intention is to lay the foundations for the development of finite element schemes.} 
\end{remark}

We continue with a standard $H^2(\Omega)$-regularity result on convex domains.

\begin{theorem}[$H^{2}(\Omega)$-regularity]
\label{thm:H^2-reg-y}
Let the assumptions of Theorem~\ref{thm:well-posedness-semilinearPDE} hold. If, in addition, $\Omega$ is convex and $f\in L^{2}(\Omega)$, then the solution $y$ of problem \eqref{eq:weak_semi_bilinear_eq} belongs to $H^{2}(\Omega)$ and $\|y\|_{H^{2}(\Omega)} \lesssim \|f - a(\cdot,0)\|_{L^{2}(\Omega)} (1 + \| u \|_{L^{2}(\Omega)})$.
\end{theorem}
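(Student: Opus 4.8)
The plan is to recast the semilinear equation as a linear Poisson problem with an $L^2(\Omega)$ forcing term and then invoke the classical shift estimate for the Laplacian on convex domains. First I would freeze the lower-order terms by introducing $g := f - a(\cdot,y) - uy$, so that the weak formulation \eqref{eq:weak_semi_bilinear_eq} states precisely that $y \in H^1_0(\Omega)$ is the weak solution of $-\Delta y = g$ in $\Omega$ with $y = 0$ on $\partial\Omega$. The decisive ingredient is that $g \in L^2(\Omega)$, which was already obtained in the proof of Theorem~\ref{thm:cont_reg}: estimate \eqref{eq:estimate_for_g} gives
\[
\|g\|_{L^2(\Omega)} \lesssim \|f - a(\cdot,0)\|_{L^2(\Omega)}\left(1 + C_{a,\mathfrak{m}} + \|u\|_{L^2(\Omega)}\right), \qquad \mathfrak{m} = \|y\|_{L^\infty(\Omega)}.
\]
Here I would stress that the membership $uy \in L^2(\Omega)$ rests on combining $u \in L^2(\Omega)$ with the $L^\infty(\Omega)$-bound for $y$ furnished by Theorem~\ref{thm:well-posedness-semilinearPDE}, while the control of $a(\cdot,y) - a(\cdot,0)$ uses assumption \ref{A3} together with the same uniform bound on $y$.

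Second, since $\Omega$ is convex and $g \in L^2(\Omega)$, I would apply the classical $H^2$-regularity (shift) theorem for the homogeneous Dirichlet problem for the Laplacian on convex domains, of Grisvard type: the solution belongs to $H^2(\Omega) \cap H^1_0(\Omega)$ and satisfies $\|y\|_{H^2(\Omega)} \lesssim \|g\|_{L^2(\Omega)}$. Chaining this with the previous bound, and absorbing the factor $1 + C_{a,\mathfrak{m}}$ into the hidden constant by means of $1 + C_{a,\mathfrak{m}} + \|u\|_{L^2(\Omega)} \leq (1 + C_{a,\mathfrak{m}})(1 + \|u\|_{L^2(\Omega)})$, yields
\[
\|y\|_{H^2(\Omega)} \lesssim \|f - a(\cdot,0)\|_{L^2(\Omega)}\left(1 + \|u\|_{L^2(\Omega)}\right),
\]
as claimed. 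Note that $C_{a,\mathfrak{m}}$ is a legitimate constant because $\mathfrak{m} = \|y\|_{L^\infty(\Omega)}$ is itself controlled by the data through the stability bound in Theorem~\ref{thm:well-posedness-semilinearPDE}.

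I do not expect a genuine obstacle here: the nonlinearity is tamed by the uniform $L^\infty$-bound on $y$ and assumption \ref{A3}, and the bilinear term $uy$ is handled by the same bound, so all the real work has already been carried out in establishing $g \in L^2(\Omega)$ in Theorem~\ref{thm:cont_reg}. The only point deserving care is the clean identification of $y$ as the weak solution of the linear problem $-\Delta y = g$ with the same homogeneous boundary data, which legitimizes applying the linear convex-domain regularity theory verbatim; this identification is immediate from \eqref{eq:weak_semi_bilinear_eq} once $g$ is recognized as an element of $L^2(\Omega)$.
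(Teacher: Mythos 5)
Your proposal is correct and follows essentially the same route as the paper: both recast \eqref{eq:weak_semi_bilinear_eq} as $-\Delta y = g$ with $g = f - a(\cdot,y) - uy \in L^2(\Omega)$ satisfying \eqref{eq:estimate_for_g}, and then invoke Grisvard-type $H^2$-regularity for the Dirichlet Laplacian on convex domains. The only cosmetic difference is that the paper cites the specific convex-domain regularity results (\cite[Theorems 3.2.1.2 and 4.3.1.4]{MR3396210} for $d=2$, and \cite[Theorem 3.2.1.2]{MR3396210} with \cite[section 4.3.1]{MR2641539} for $d=3$), while you state the shift estimate generically.
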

\begin{proof}
The proof follows directly from \cite[Theorems 3.2.1.2 and 4.3.1.4]{MR3396210} for $d = 2$ and \cite[Theorem 3.2.1.2]{MR3396210} and \cite[section 4.3.1]{MR2641539} for $d = 3$ in conjunction with the fact that $g = f - a(\cdot,y) - uy \in L^2(\Omega)$ and satisfies the bound \eqref{eq:estimate_for_g}.
\end{proof}

\subsection{Differentiability properties}
We now examine differentiability properties of the map $u \mapsto y$, where $y$ corresponds to the solution of problem \eqref{eq:weak_semi_bilinear_eq}.

\begin{theorem}[differentiability properties of $u \mapsto y$]\label{thm:properties_S}
Let us assume that \ref{A1}--\ref{A3} hold and let $f \in L^{2}(\Omega)$. Let $D(\Omega) = \{y \in H_{0}^{1}(\Omega): \Delta y \in L^{2}(\Omega)\}$. Then there exists an open set $\mathcal{A}$ in $L^{2}(\Omega)$ such that $\mathcal{A}_{0} \subset \mathcal{A}$ and for every $u \in \mathcal{A}$ the problem \eqref{eq:weak_semi_bilinear_eq} has a unique solution $y \in H_{0}^{1}(\Omega) \cap C^{0,\varsigma}(\bar{\Omega})$, where $0 < \varsigma \leq 1-d/\kappa < 1$ and $\kappa$ is as in the statement of Theorem \ref{thm:W1kappa-regularity}. Moreover, there exists a map $\mathcal{S}: \mathcal{A} \rightarrow D(\Omega)$ of class $C^{2}$ so that for every $u \in \mathcal{A}$ we have the following properties:
\begin{itemize}
\item[(i)] $\mathcal{S}(u) = y \in H_{0}^{1}(\Omega) \cap C^{0,\varsigma}(\bar{\Omega})$, where $y$ is the unique solution to \eqref{eq:weak_semi_bilinear_eq},
\item[(ii)] for every $h \in L^{2}(\Omega)$, the function $z = \mathcal{S}'(u)h \in H_{0}^{1}(\Omega) \cap C^{0,\varsigma}(\bar{\Omega})$ is the unique solution to the problem
\begin{equation}\label{eq:DS}
(\nabla z, \nabla v)_{L^{2}(\Omega)} + \left(\tfrac{\partial a}{\partial y}(\cdot,y)z,v \right)_{L^{2}(\Omega)} + (uz,v)_{L^{2}(\Omega)} = -(hy,v)_{L^{2}(\Omega)}
\end{equation}
for all $v \in H^{1}_{0}(\Omega)$, and
\item[(iii)] for every $h_{1},h_{2} \in L^{2}(\Omega)$, the function $\gamma = \mathcal{S}''(u)h_{1}h_{2} \in H_{0}^{1}(\Omega) \cap C^{0,\varsigma}(\bar{\Omega})$ is the unique solution to the problem
\begin{multline}\label{eq:D2_S}
    (\nabla \gamma, \nabla v)_{L^{2}(\Omega)} + \left( \tfrac{\partial a}{\partial y}(\cdot, y)\gamma,v\right)_{L^{2}(\Omega)} + (u\gamma ,v)_{L^{2}(\Omega)} 
\\
= -(h_{1}z_{2} + h_{2}z_{1},v)_{L^{2}(\Omega)} - \left(\tfrac{\partial^{2}a}{\partial y^{2}}(\cdot,y)z_{2}z_{1},v\right)_{L^{2}(\Omega)}
\end{multline}
for all $v \in H^{1}_{0}(\Omega)$, where $z_i = \mathcal{S}'(u)h_i$ and $i \in \{1,2\}$.
\end{itemize}
\end{theorem}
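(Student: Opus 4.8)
The plan is to recast the statement as an application of the implicit function theorem in suitable Banach spaces. First I would equip $D(\Omega)$ with the graph norm $\|y\|_{D(\Omega)} := \|y\|_{H_0^1(\Omega)} + \|\Delta y\|_{L^2(\Omega)}$ and record the continuous embedding $D(\Omega) \hookrightarrow C^{0,\varsigma}(\bar{\Omega})$: if $y \in D(\Omega)$ then $-\Delta y \in L^2(\Omega)$, and the regularity argument behind Theorem~\ref{thm:W1kappa-regularity} (applied to the Poisson problem with right-hand side $-\Delta y$) yields $y \in W^{1,\kappa}(\Omega)$ with $\kappa > d$, whence the Hölder embedding applies with a linear bound. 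With this in hand I define
\[
F : L^2(\Omega) \times D(\Omega) \to L^2(\Omega), \qquad F(u,y) := -\Delta y + a(\cdot,y) + uy - f,
\]
so that solutions of \eqref{eq:weak_semi_bilinear_eq} lying in $D(\Omega)$ correspond exactly to the zeros of $F$.

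Next I would verify that $F$ is of class $C^2$. The term $-\Delta y$ is linear and bounded from $D(\Omega)$ to $L^2(\Omega)$, and the bilinear term $(u,y)\mapsto uy$ is bounded since $\|uy\|_{L^2(\Omega)} \leq \|u\|_{L^2(\Omega)}\|y\|_{L^\infty(\Omega)}$ together with $D(\Omega)\hookrightarrow L^\infty(\Omega)$, hence smooth. The only delicate contribution is the Nemytskii operator $y \mapsto a(\cdot,y)$: here I would combine \ref{A1} with the uniform bounds and the Lipschitz estimate on $\partial^2 a/\partial y^2$ from \ref{A3}, using $D(\Omega)\hookrightarrow C(\bar{\Omega})$, to show this map is $C^2$ from $D(\Omega)$ into $L^2(\Omega)$, with first and second derivatives given by multiplication by $\partial a/\partial y(\cdot,y)$ and $\partial^2 a/\partial y^2(\cdot,y)$. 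This two-norm setting (domain in a space of continuous functions, range in $L^2$) is precisely what renders the superposition operator differentiable and is the reason we must work in $D(\Omega)$ rather than in $H_0^1(\Omega)$ alone.

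The crucial structural step is the invertibility of the partial derivative $\partial_y F(u,y)\,w = -\Delta w + (\partial a/\partial y(\cdot,y)+u)w$ as a map $D(\Omega)\to L^2(\Omega)$. For $u\in\mathcal{A}_0$, assumption \ref{A2} gives $\partial a/\partial y(\cdot,y)+u \geq a_0+u \geq 0$, so the associated bilinear form is coercive on $H_0^1(\Omega)$ and $\partial_y F(u,y)$ is an isomorphism by Lax--Milgram together with the elliptic regularity that lands the solution in $D(\Omega)$. To build the open set $\mathcal{A}$, I would exploit that coercivity survives small $L^2$-perturbations: using $H_0^1(\Omega)\hookrightarrow L^4(\Omega)$ one estimates $|\int_\Omega \delta u\, w^2| \leq C_{4\hookrightarrow 2}^2 \|\delta u\|_{L^2(\Omega)}\|\nabla w\|_{L^2(\Omega)}^2$, so for $\rho < C_{4\hookrightarrow 2}^{-2}$ the tubular neighborhood $\mathcal{A}:=\{u\in L^2(\Omega): \operatorname{dist}_{L^2}(u,\mathcal{A}_0)<\rho\}$ keeps both the linearized form and (via monotonicity of the function $b$ from the proof of Theorem~\ref{thm:well-posedness-semilinearPDE}) the full nonlinear operator strongly monotone. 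On $\mathcal{A}$ the monotone-operator argument therefore yields existence and uniqueness of the state, so $\mathcal{S}$ is well defined and single-valued, while the maintained coercivity supplies the isomorphism hypothesis of the implicit function theorem.

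With these ingredients the implicit function theorem produces, locally around each $(u,\mathcal{S}(u))$, a $C^2$ map into $D(\Omega)$; global uniqueness of the state glues these into a single $C^2$ map $\mathcal{S}:\mathcal{A}\to D(\Omega)$, which is (i). Formulas (ii) and (iii) then follow by differentiating the identity $F(u,\mathcal{S}(u))=0$: one differentiation gives $\partial_y F(u,y)[\mathcal{S}'(u)h] = -\partial_u F(u,y)[h] = -hy$, i.e.\ the weak form \eqref{eq:DS}, and a second differentiation, using $\partial_{uu}F=0$, $\partial_{uy}F[h,z]=hz$, and $\partial_{yy}F[z_1,z_2]=\tfrac{\partial^2 a}{\partial y^2}(\cdot,y)z_1z_2$, produces \eqref{eq:D2_S}. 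Since the right-hand sides $hy$, $h_1z_2+h_2z_1$, and $\tfrac{\partial^2 a}{\partial y^2}(\cdot,y)z_1z_2$ all lie in $L^2(\Omega)$ (the factors $y$ and $z_i$ being bounded), Theorem~\ref{thm:cont_reg} places $z$ and $\gamma$ in $H_0^1(\Omega)\cap C^{0,\varsigma}(\bar\Omega)$. I expect the principal obstacles to be establishing the $C^2$-differentiability of the Nemytskii term into $L^2(\Omega)$ and the clean construction of $\mathcal{A}$ that simultaneously secures global uniqueness of the state and the invertibility required by the implicit function theorem.
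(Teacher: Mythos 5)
Your proposal is correct and follows essentially the same route as the paper: the same graph norm on $D(\Omega)$ and embedding $D(\Omega) \hookrightarrow C^{0,\varsigma}(\bar{\Omega})$, the same map $F(u,y) = -\Delta y + a(\cdot,y) + uy - f$ of class $C^2$, the same construction of $\mathcal{A}$ as an $L^2$-neighborhood of $\mathcal{A}_0$ with radius below $C_{4\hookrightarrow 2}^{-2}$, and the implicit function theorem plus differentiation of $F(u,\mathcal{S}(u))=0$ to obtain (ii) and (iii). The paper merely outsources the implicit-function-theorem details to \cite[Theorem 2.5]{Casas_bilinear_1}, which is exactly the argument you spell out.
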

\begin{proof}
We begin the proof by introducing the following norm in $D(\Omega)$:
\begin{equation}\label{eq:norm_D}
 \| y \|_{D(\Omega)} := \|\nabla y\|_{L ^{2}(\Omega)} + \| \Delta y \|_{L^{2}(\Omega)}, \qquad y \in D(\Omega).
\end{equation}
It can be proved that $D(\Omega)$, endowed with this norm, is a Banach space. We now prove that $D(\Omega) \hookrightarrow C^{0,\varsigma}(\bar{\Omega})$. In fact, let  $y \in D(\Omega)$. Note that $y$ can be seen as the weak solution to the following problem: Find $y \in H_0^1(\Omega)$ such that
$
-\Delta y = g
$
in $\Omega$ and $y = 0$ on $\partial \Omega$,
where $g \in L^{2}(\Omega)$. An application of \cite[Theorem 0.5]{MR1331981} shows that
\begin{equation*}
\| y \|_{C^{0,\varsigma}(\bar{\Omega})}
\lesssim
\| \nabla y \|_{L^{\kappa}(\Omega)}
\lesssim \| g \|_{L^{2}(\Omega)} \lesssim \| y \|_{D(\Omega)}.
\end{equation*} 

We now define
\[
F: L^{2}(\Omega) \times D(\Omega) \rightarrow L^{2}(\Omega), \qquad (u,y) \mapsto F(u,y) := -\Delta y + a(\cdot,y) + uy - f.
\]
Based on the assumptions \ref{A1}--\ref{A3}, it follows immediately that $F$ is well-defined and that $F$ is of class $C^{2}$. The rest of the proof follows from the arguments developed in \cite[Theorem 2.5]{Casas_bilinear_1} in combination with the arguments elaborated in the proofs of Theorems \ref{thm:well-posedness-semilinearPDE} and \ref{thm:W1kappa-regularity}. In our framework, the open set $\mathcal{A}$ is as follows:
\begin{equation}
\mathcal{A} \subset L^2(\Omega),
\qquad
\mathcal{A} := \bigcup_{\bar{u} \in \mathcal{A}_{0}} B_{\varepsilon_{\bar{u}}}(\bar{u}),
\qquad
0< \varepsilon_{\bar{u}} < C_{4\hookrightarrow2}^{-2}.
\label{eq:mathcal_A}
\end{equation}
Here, $B_{\varepsilon_{\bar{u}}}(\bar{u})$ denotes the open ball in $L^2(\Omega)$ centered at $\bar{u} \in \mathcal{A}_{0}$ of radius $\varepsilon_{\bar{u}}$ and $C_{4\hookrightarrow 2}$ corresponds to the best constant in the Sobolev embedding $H^{1}_{0}(\Omega) \hookrightarrow L^{4}(\Omega)$.
\end{proof}

As explained in the following remark, problems \eqref{eq:DS} and \eqref{eq:D2_S} are well-posed.

\begin{remark}[well-posedness of \eqref{eq:DS} and \eqref{eq:D2_S}]\label{rmk:wp_derivatives_problems} Let $u \in \mathcal{A}$ and $y = \mathcal{S}(u)$; $\mathcal{A}$ is defined in \eqref{eq:mathcal_A}. By construction, we have the existence of $\bar{u} \in \mathcal{A}_{0}$ and  $\varepsilon_{\bar{u}} < C_{4 \hookrightarrow 2}^{-2}$ such that $u \in B_{\varepsilon_{\bar{u}}}(\bar{u})$. Define the form $B : H_{0}^{1}(\Omega) \times H_{0}^{1}(\Omega) \rightarrow \mathbb{R}$ by
\[
(w,v) \mapsto B(w,v) := \int_{\Omega}\nabla w \cdot \nabla v \mathrm{d}x + \int_{\Omega}\left[\tfrac{\partial a}{\partial y}(\cdot,y) + \bar{u} \right]wv  \mathrm{d}x + \int_{\Omega}(u - \bar{u})wv \mathrm{d}x.
\]
It is clear that $B$ is a bilinear and continuous form in $H_{0}^{1}(\Omega) \times H_{0}^{1}(\Omega)$. Moreover, the bilinear form $B$ is coercive in $H_{0}^{1}(\Omega) \times H_{0}^{1}(\Omega)$. In fact, given $w \in H_0^1(\Omega)$, we have
\begin{equation}
B(w,w) \geq \|\nabla w\|^{2}_{L^{2}(\Omega)} - \|u - \bar{u}\|_{L^{2}(\Omega)}\|w\|^{2}_{L^{4}(\Omega)} \geq (1 - \varepsilon_{\bar{u}}C_{4 \hookrightarrow 2}^{2})\|\nabla w\|^{2}_{L^{2}(\Omega)},
\label{eq:B_coercive}
\end{equation}
where we have used $H_0^{1}(\Omega) \hookrightarrow L^{4}(\Omega)$, the assumption \ref{A2}, and the fact that $\bar{u} \in \mathcal{A}_0$. Given $h \in L^2(\Omega)$, it is clear that $L$ defined by $H_{0}^{1}(\Omega) \ni v \mapsto L(v) = -(hy,v)_{L^{2}(\Omega)} \in \mathbb{R}$ is linear and continuous. Therefore, a simple application of the Lax-Milgram lemma shows that \eqref{eq:DS} is well-posed. Moreover, using \eqref{eq:B_coercive} and \eqref{eq:stability_weak_problem} we can obtain
\[
\|\nabla z \|_{L^{2}(\Omega)} \leq C \|h\|_{L^{2}(\Omega)}\|\nabla y\|_{L^{2}(\Omega)} \leq C \|h\|_{L^{2}(\Omega)}\|f - a(\cdot, 0)\|_{H^{-1}(\Omega)},
\]
where $C = C^{2}_{4 \hookrightarrow 2} ( 1 - \varepsilon_{\bar{u}}C^{2}_{4 \hookrightarrow 2} )^{-1}$. The fact that $z \in C^{0,\varsigma}(\bar \Omega)$ follows from the arguments developed in the proof of Theorem \ref{thm:cont_reg}.
The same arguments show that \eqref{eq:D2_S} is also well-posed and similar arguments show that $\gamma \in C^{0,\varsigma}(\bar \Omega)$.
\end{remark}


\section{The optimal control problem}\label{sec:OCP}
In this section, we introduce a weak formulation of the optimal control problem \eqref{def:cost_func}--\eqref{def:box_const}, analyze the existence of optimal solutions, and study the well-posedness, as well as differentiability and regularity properties, of the associated adjoint problem. 
With these results at hand, we derive first and necessary and sufficient second order optimality conditions.

The weak formulation mentioned above reads as follows: Find
\begin{equation}\label{eq:weak_ocp}
    \min
    \left\lbrace J(y,u) : (y,u) \in H^{1}_{0}(\Omega) \cap C^{0,\varsigma}(\bar{\Omega})  
    \times \mathbb{U}_{ad}\right\rbrace
\end{equation}
subject to the \emph{monotone}, \emph{semilinear}, and \emph{elliptic} PDE
\begin{equation}\label{eq:weak_state_eq}
    (\nabla y, \nabla v)_{L^{2}(\Omega)} + (a(\cdot,y),v)_{L^{2}(\Omega)} + (uy,v)_{L^{2}(\Omega)} = (f,v)_{L^{2}(\Omega)}  \quad \forall v \in H^{1}_{0}(\Omega). 
\end{equation}
Here, $a : \Omega \times \mathbb{R} \rightarrow \mathbb{R}$ satisfies \ref{A1}--\ref{A3}, $f \in L^{2}(\Omega)$, and $\varsigma$ is such that $0 < \varsigma \leq 1 - d/\kappa < 1$, where $\kappa$ is as in the statement of Theorem \ref{thm:W1kappa-regularity}.

To perform an analysis for the optimal control problem \eqref{eq:weak_ocp}--\eqref{eq:weak_state_eq}, we make the following assumption \cite[Assumption 3.1]{Casas_bilinear_1}:
\begin{equation}\label{eq:restriction_a}
a_{0}(x)+\mathtt{a} \geq 0 
\quad
\text{for a.e. } x \in \Omega.
\end{equation}
From this assumption it follows directly that $\mathbb{U}_{ad} \subset \mathcal{A}_{0} \subset \mathcal{A}$. We recall that the set $\mathcal{A}_0$ is defined in \eqref{eq:mathcal_A_0} and the set $\mathcal{A}$ is given as in \eqref{eq:mathcal_A}.

Under assumption \eqref{eq:restriction_a}, we are in a position to apply the results of section \ref{sec:semilinear_eq} directly. In particular, we have for $u \in \mathbb{U}_{ad}$ the existence and uniqueness of a solution $y \in H^{1}_{0}(\Omega) \cap C^{0,\varsigma}(\bar{\Omega})$ to problem \eqref{eq:weak_state_eq}. We note that since $y \in C^{0,\varsigma}(\bar \Omega)$, the point evaluations of $y$ in \eqref{def:cost_func} are well-defined.

\subsection{Existence of solutions}

In Theorem \ref{thm:properties_S} we have proved the existence of the \emph{control-to-state map} $\mathcal{S}: \mathcal{A} \rightarrow H^{1}_{0}(\Omega) \cap C^{0,\varsigma}(\bar{\Omega})$, which assigns to a control $u$ the unique state $y$ that solves \eqref{eq:weak_state_eq}. With this map in hand, we introduce the notion of \emph{global solution}: $\bar{u} \in \mathbb{U}_{ad}$ is a global solution of \eqref{eq:weak_ocp}--\eqref{eq:weak_state_eq} if
$
J(\mathcal{S}(\bar{u}),\bar{u}) \leq J(\mathcal{S}(u),u)
$
for all $u \in \mathbb{U}_{ad}$. If $\bar{u}$ is a global solution, the pair $(\bar{y},\bar{u})$, where $\bar{y} = \mathcal{S} (\bar u)$, is called a \emph{globally optimal state--control pair}.

The existence of a globally optimal state--control pair is as follows.

\begin{theorem}[existence of global solutions]
The control problem \eqref{eq:weak_ocp}--\eqref{eq:weak_state_eq} admits at least one globally optimal state--control pair $(\bar{y},\bar{u}) \in H^{1}_{0}(\Omega) \cap C^{0,\varsigma}(\bar{\Omega}) \times \mathbb{U}_{ad}$.
\end{theorem}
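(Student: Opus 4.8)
The plan is to use the direct method of the calculus of variations. Since $J$ is bounded below by zero, the infimum over the admissible set is finite; I would therefore take a minimizing sequence $\{u_n\}_{n \in \mathbb{N}} \subset \mathbb{U}_{ad}$ such that $J(\mathcal{S}(u_n),u_n) \to \inf_{u \in \mathbb{U}_{ad}} J(\mathcal{S}(u),u)$, and set $y_n = \mathcal{S}(u_n)$. The goal is to extract a subsequence that converges to an admissible pair $(\bar y, \bar u)$ and then to pass to the limit in both the state equation and the cost functional.

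For the compactness step I would first observe that $\mathbb{U}_{ad}$ is bounded, closed, and convex in $L^2(\Omega)$, hence weakly sequentially compact; thus, after relabeling, $u_n \rightharpoonup \bar u$ in $L^2(\Omega)$ with $\bar u \in \mathbb{U}_{ad}$ (weak closedness of the convex set $\mathbb{U}_{ad}$ follows from Mazur's lemma). Since $\|u_n\|_{L^2(\Omega)}$ is uniformly bounded, the stability and regularity bounds \eqref{eq:regularity_bound_W1k} from Theorem~\ref{thm:W1kappa-regularity} give a uniform bound on $\{y_n\}$ in $W^{1,\kappa}(\Omega) \cap C^{0,\varsigma}(\bar\Omega)$. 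The compact embedding $W^{1,\kappa}(\Omega) \hookrightarrow\hookrightarrow C(\bar\Omega)$ (equivalently $C^{0,\varsigma}(\bar\Omega) \hookrightarrow\hookrightarrow C(\bar\Omega)$ by Arzel\`a--Ascoli, using $\varsigma > 0$) then lets me extract a further subsequence with $y_n \to \bar y$ strongly in $C(\bar\Omega)$ and $y_n \rightharpoonup \bar y$ in $H_0^1(\Omega)$. The strong uniform convergence is exactly what is needed to handle the point evaluations in \eqref{def:cost_func}.

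Next I would verify that $\bar y = \mathcal{S}(\bar u)$, i.e.\ that the limit pair satisfies \eqref{eq:weak_state_eq}. Passing to the limit in the diffusion term uses $y_n \rightharpoonup \bar y$ in $H_0^1(\Omega)$; the term $(a(\cdot,y_n),v)$ converges by the uniform convergence $y_n \to \bar y$ together with the continuity and local boundedness of $a$ in its second argument (assumptions \ref{A1} and \ref{A3}) and dominated convergence. The bilinear term $(u_n y_n, v)$ is the delicate one: I would write $(u_n y_n, v) = (u_n, y_n v)$ and split it as $(u_n, (y_n - \bar y)v) + (u_n, \bar y v)$; the first piece tends to zero because $\{u_n\}$ is $L^2$-bounded and $(y_n - \bar y)v \to 0$ strongly in $L^2(\Omega)$ (using $y_n \to \bar y$ uniformly and $v \in L^2$), while the second piece converges by the weak convergence $u_n \rightharpoonup \bar u$ tested against the fixed function $\bar y v \in L^2(\Omega)$. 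This confronts the product of two weakly/strongly converging sequences, and it is the main obstacle of the proof; the resolution hinges on upgrading the state convergence to \emph{strong} (uniform) convergence so that only one factor remains merely weakly convergent. Hence $(\bar y, \bar u)$ solves \eqref{eq:weak_state_eq}, so $\bar y = \mathcal{S}(\bar u)$ and the pair is admissible.

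Finally I would pass to the limit in the cost functional. The tracking term $\tfrac12 \sum_{t \in \mathcal{D}} (y_n(t) - y_t)^2 \to \tfrac12 \sum_{t \in \mathcal{D}} (\bar y(t) - y_t)^2$ by the strong convergence in $C(\bar\Omega)$, and the regularization term is weakly lower semicontinuous, $\|\bar u\|_{L^2(\Omega)}^2 \leq \liminf_{n} \|u_n\|_{L^2(\Omega)}^2$, since the $L^2$-norm is weakly lower semicontinuous. Combining these,
\begin{equation*}
J(\bar y, \bar u) \leq \liminf_{n \to \infty} J(y_n, u_n) = \inf_{u \in \mathbb{U}_{ad}} J(\mathcal{S}(u),u),
\end{equation*}
which forces equality and shows that $(\bar y, \bar u)$ is a globally optimal state--control pair, completing the proof.
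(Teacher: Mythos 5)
Your proof is correct and follows essentially the same route as the paper, which simply cites an adaptation of the direct method from \cite[Theorem 4.15]{MR2583281} to the bilinear, pointwise-tracking setting and omits the details. Your write-up supplies precisely the two adaptations the paper alludes to: the splitting $(u_n, y_nv) = (u_n,(y_n-\bar y)v) + (u_n,\bar y v)$ to pass to the limit in the bilinear term with only $u_n$ weakly convergent, and the uniform (H\"older/compactness) convergence of the states needed to handle the point evaluations in the cost.
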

\begin{proof}
The proof follows from an adaptation of the arguments elaborated in the proof of \cite[Theorem 4.15]{MR2583281} to our bilinear scenario with pointwise tracking. For brevity, we omit the details.
\end{proof}


\subsection{The adjoint equation}

To derive optimality conditions, we introduce the \emph{adjoint equation}. Let $u \in \mathcal{A}$ and let $y=\mathcal{S}(u)$ (cf. Theorem \ref{thm:properties_S}). Let $r$ be such that
\begin{equation}
\label{eq:r}
 r \in (1,2) \textrm{ if } d=2,
 \qquad
 r \in \left[ \tfrac{6}{5},\tfrac{3}{2} \right) \textrm{ if } d=3,
\end{equation}
and let $s$ be the H\"older conjugate of $r$, i.e., $s$ is such that $1/r + 1/s = 1$. We note that $s>d$. In this framework, we introduce the \emph{adjoint equation} as follows: Find $p \in W^{1,r}_{0}(\Omega)$ such that
\begin{equation}\label{eq:weak_adj_eq}
 \int_{\Omega}\nabla w \cdot \nabla p \mathrm{d}x +  \int_{\Omega}\left[ \tfrac{\partial a}{\partial y}(\cdot,y) + u \right]pw \mathrm{d}x = \sum_{t \in \mathcal{D}} \langle (y(t)-y_{t})\delta_{t},w \rangle \quad \forall w \in W^{1,s}_{0}(\Omega).
\end{equation} 
Here, $\langle \cdot , \cdot \rangle$ denotes the duality pairing between $W^{-1,r}(\Omega)$ and $W^{1,s}_{0}(\Omega)$. We note that the restriction $r \geq 6/5$ in three dimensions in \eqref{eq:r} guarantees that the term $([ \partial a/ \partial y (\cdot,y) + u ]w,p )_{L^{2}(\Omega)}$ in \eqref{eq:weak_adj_eq} is well-defined because
\[
p \in W^{1,r}_{0}(\Omega) \hookrightarrow L^2(\Omega),
\qquad
w \in W^{1,s}_{0}(\Omega) \hookrightarrow C(\bar \Omega);
\]
see \cite[Theorem 4.12, Part I, Case C]{adams2003sobolev} and \cite[Theorem 4.12, Part II]{adams2003sobolev}, respectively.

\subsubsection{Well-posedness}
We now establish the well-posedness of the adjoint problem \eqref{eq:weak_adj_eq}. To do so, we consider two cases: $u \in \mathcal{A}_0$ and $u \in \mathcal{A} \setminus \mathcal{A}_0$.

\begin{theorem}[well-posedness of the adjoint problem I]\label{thm:adjoint_problem}
Let $u \in \mathcal{A}_0$, let $y=\mathcal{S}(u)$, and let $r$ be as in \eqref{eq:r}. Then, there is a unique solution $p \in W^{1,r}_{0}(\Omega)$ for \eqref{eq:weak_adj_eq}. Moreover, we have the bound
\begin{equation}\label{eq:stab_adj_eq}
\|\nabla p\|_{L^{r}(\Omega)} \lesssim \|f - a(\cdot,0)\|_{L^{2}(\Omega)} + \sum_{t \in \mathcal{D}}|y_{t}|,
\end{equation}
where the hidden constant depends on $\# \mathcal{D}$ but is independent of $p$.
\end{theorem}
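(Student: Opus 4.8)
The plan is to combine three ingredients: the fact that the Dirac right-hand side is a bounded functional on the test space, the sign and integrability of the reaction coefficient that is available in the regime $u\in\mathcal{A}_0$, and the classical regularity theory for linear elliptic equations with measure data. First I would record the structure of the zeroth-order coefficient. Writing $c:=\partial a/\partial y(\cdot,y)+u$, assumption \ref{A2} gives $\partial a/\partial y(\cdot,y)\ge a_0$, while $u\in\mathcal{A}_0$ gives $a_0+u\ge 0$; hence $c=(\partial a/\partial y(\cdot,y)-a_0)+(a_0+u)\ge 0$ a.e.~in $\Omega$. Since $y\in L^\infty(\Omega)$ by Theorem~\ref{thm:well-posedness-semilinearPDE}, \ref{A3} yields $\partial a/\partial y(\cdot,y)\in L^\infty(\Omega)$, so that $c\in L^2(\Omega)$ with $c\ge 0$. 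The restriction $r\ge 6/5$ for $d=3$ in \eqref{eq:r} guarantees $W^{1,r}_0(\Omega)\hookrightarrow L^2(\Omega)$, so every candidate solution lies in $L^2(\Omega)$ and $cpw$ is integrable for $w\in W^{1,s}_0(\Omega)\hookrightarrow C(\bar\Omega)$. Next I would verify that the right-hand side is an element of $W^{-1,r}(\Omega)=(W^{1,s}_0(\Omega))'$: as $s>d$, the embedding $W^{1,s}_0(\Omega)\hookrightarrow C(\bar\Omega)$ gives $|\sum_{t}(y(t)-y_t)\langle\delta_t,w\rangle|\le(\sum_t|y(t)-y_t|)\|w\|_{C(\bar\Omega)}\lesssim(\sum_t|y(t)-y_t|)\|w\|_{W^{1,s}_0(\Omega)}$, so the functional is bounded with norm controlled by $\sum_t|y(t)-y_t|$.

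For existence together with the stability bound I would argue by approximation. I would replace each $\delta_t$ by a mollification $\delta_t^n\in L^2(\Omega)$ with $\|\delta_t^n\|_{L^1(\Omega)}\le 1$ and $\delta_t^n\rightharpoonup\delta_t$ weakly-$\star$, set $\mu_n:=\sum_t(y(t)-y_t)\delta_t^n$, and solve the regularized problems $-\Delta p_n+c\,p_n=\mu_n$ in $H^1_0(\Omega)$; this is well-posed by Lax--Milgram, the coercivity coming from $c\ge 0$ exactly as for the operator $\mathfrak{A}$ in the proof of Theorem~\ref{thm:well-posedness-semilinearPDE}. The crucial step is a uniform $W^{1,r}_0(\Omega)$ a priori estimate $\|\nabla p_n\|_{L^r(\Omega)}\le C\|\mu_n\|_{L^1(\Omega)}$ valid for $r<d/(d-1)$, which I would obtain by the truncation technique of Boccardo and Gallou\"et: testing with $T_k(p_n)$, the zeroth-order term enters with the favorable sign $\int_\Omega c\,p_n T_k(p_n)\ge 0$ and can simply be discarded in the level-set estimates, so only $\|\mu_n\|_{L^1(\Omega)}$ survives and the constant is independent of $c$ and of $n$. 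By weak compactness $p_n\rightharpoonup p$ in $W^{1,r}_0(\Omega)$ along a subsequence, and since $W^{1,r}_0(\Omega)\hookrightarrow L^2(\Omega)$ also $p_n\rightharpoonup p$ in $L^2(\Omega)$.

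I would then pass to the limit in the weak formulation tested against a fixed $w\in W^{1,s}_0(\Omega)$: the principal term converges by the weak $L^r$-convergence of $\nabla p_n$ against $\nabla w\in L^s(\Omega)$; the reaction term converges because $cw\in L^2(\Omega)$ is a fixed element paired with the weakly $L^2$-convergent $p_n$; and the right-hand side converges by construction of $\mu_n$ together with the continuity of $w$. Lower semicontinuity of the norm under weak convergence then transfers the a priori bound to $p$. For uniqueness I would pass to the homogeneous problem and use a duality argument: given $\tilde p\in W^{1,r}_0(\Omega)$ with $\int_\Omega\nabla\tilde p\cdot\nabla w+\int_\Omega c\tilde p w=0$ for all $w\in W^{1,s}_0(\Omega)$, I would test against solutions $\varphi\in W^{1,s}_0(\Omega)$ of the adjoint (predual) problem $-\Delta\varphi+c\varphi=g$ for a sufficiently rich family of data $g$ and conclude $\tilde p=0$; the well-posedness of that predual problem in the regularity class dictated by \eqref{eq:r} is precisely what the linear theory of \cite{MR0812624,MR3449612,MR3973329,MR4438718} supplies. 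Finally, I would convert the bound into \eqref{eq:stab_adj_eq} by estimating $\sum_t|y(t)-y_t|\le\#\mathcal{D}\,\|y\|_{C(\bar\Omega)}+\sum_t|y_t|$ and invoking $\|y\|_{L^\infty(\Omega)}\lesssim\|f-a(\cdot,0)\|_{L^2(\Omega)}$ from Theorem~\ref{thm:well-posedness-semilinearPDE}, which absorbs the state contribution into the first term on the right-hand side.

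The main obstacle is the uniform $W^{1,r}$ estimate and the matching uniqueness. Because $r$ can be as small as $6/5$ when $d=3$, one is below the range in which the pure Laplacian is an isomorphism from $W^{1,r}_0(\Omega)$ onto $W^{-1,r}(\Omega)$ over a general Lipschitz domain, so the estimate cannot be read off from $W^{1,s}$-duality at the conjugate exponent; it must instead be extracted from the measure-data/truncation machinery, and it is exactly the nonnegativity of $c$ (guaranteed here by $u\in\mathcal{A}_0$) that renders the lower-order term harmless in both the a priori bound and the passage to the limit.
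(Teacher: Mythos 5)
Your existence and stability arguments are correct, but they take a genuinely different route from the paper. The paper disposes of the whole theorem in one stroke by invoking Stampacchia's duality (transposition) method, namely slight modifications of \cite[Th\'eor\`eme 9.1]{MR0192177}, with the single observation that the strict positivity of the reactive coefficient required there can be relaxed to the nonnegativity $\partial a/\partial y(\cdot,y)+u\geq 0$ that \ref{A2} and $u\in\mathcal{A}_0$ provide. You instead construct the solution: mollify the Dirac data, solve the regularized problems by Lax--Milgram, and extract a uniform $W^{1,r}_0(\Omega)$ bound by Boccardo--Gallou\"et truncation, where the sign $c:=\partial a/\partial y(\cdot,y)+u\geq 0$ lets you discard the zeroth-order term. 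This part is sound and self-contained: the limit passage uses exactly the embeddings guaranteed by \eqref{eq:r}, the constant is independent of $u$ (consistent with the statement, and in contrast to Theorem \ref{thm:well_posed_adjoint2}), and the final estimate $\sum_{t}|y(t)-y_t|\lesssim\|f-a(\cdot,0)\|_{L^2(\Omega)}+\sum_t|y_t|$ correctly yields \eqref{eq:stab_adj_eq}. As an existence proof it is a legitimate, arguably more transparent, alternative to the paper's citation.

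The gap is in uniqueness. Your duality argument needs, for each datum $g$, a predual solution $\varphi$ of $-\Delta\varphi+c\varphi=g$ that is an \emph{admissible test function}, i.e.\ $\varphi\in W^{1,s}_0(\Omega)$ with $s=r'>d$. Since $c\in L^2(\Omega)$ and $\varphi\in L^{\infty}(\Omega)$, the most one can extract on a Lipschitz domain is $-\Delta\varphi\in L^2(\Omega)$, hence $\varphi\in W^{1,\kappa}_0(\Omega)$ with $\kappa$ only slightly above $4$ for $d=2$ and slightly above $3$ for $d=3$, by \cite[Theorem 0.5]{MR1331981}. But \eqref{eq:r} forces $s$ as large as $6$ (when $r=6/5$, $d=3$) and arbitrarily large $s$ (as $r\downarrow 1$, $d=2$), so the ``linear theory'' you appeal to does not supply predual well-posedness in the class you need; your argument closes only for those $r$ with $r'\leq\kappa$, i.e.\ near the upper end of \eqref{eq:r}. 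Notably, you identified precisely this exponent obstruction when discussing the a priori estimate, but did not notice that it equally blocks the duality step. Moreover, the obstruction is not an artifact of your method: take $a\equiv 0$, $u\equiv 0$ (admissible under \ref{A1}--\ref{A3} and \eqref{eq:mathcal_A_0}) and a Lipschitz domain with a re-entrant corner of opening $\omega>\pi$; then the dual singular function, in polar coordinates at the corner $\rho^{-\pi/\omega}\sin(\pi\theta/\omega)$, corrected by an $H^1_0(\Omega)$ function to have zero trace, is a nontrivial element of $W^{1,r}_0(\Omega)$ for $1<r<2\omega/(\omega+\pi)$ that annihilates the bilinear form against every $w\in W^{1,s}_0(\Omega)$. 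Hence for small $r$ in the range \eqref{eq:r}, uniqueness in the literal weak sense of \eqref{eq:weak_adj_eq} cannot be rescued by any argument. To complete your proof you must either restrict $r$ so that $r'$ lies within the Jerison--Kenig range, or formulate and prove uniqueness in Stampacchia's transposition sense (duality against predual solutions in $H^1_0(\Omega)\cap C(\bar\Omega)$ with $L^{q}(\Omega)$ data, $q>d/2$), which is the solution concept underlying the proof the paper cites.
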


\begin{proof}
If $u \in \mathcal{A}_{0}$, the proof follows from slight modifications of the arguments developed in the proof of \cite[Th\'eor\`eme 9.1]{MR0192177}. It is important to note that in our framework the strict positivity of the reactive coefficient required in \cite[Th\'eor\`eme 9.1, assumptions (9.2) and (9.2')]{MR0192177} can be relaxed so that it is nonnegative a.e.~in $\Omega$. 
\end{proof}

To analyze the well-posedness of \eqref{eq:weak_adj_eq} in the case that $u \in \mathcal{A}\setminus \mathcal{A}_0$, we introduce the following bilinear form:
$Q_{u}: W^{1,r}_{0}(\Omega) \times W^{1,s}_{0}(\Omega) \rightarrow \mathbb{R}$, where
\[
Q_{u}(v,w) :=  \int_{\Omega} \nabla w \cdot \nabla v \mathrm{d}x 
 + 
\int_{\Omega}
\left[\frac{\partial a}{\partial y}(\cdot,y) +u\right]vw 
\mathrm{d}x,
\qquad
u \in L^2(\Omega).
\]
It is immediate that $Q_u$ is well-defined and continuous in $W^{1,r}_{0}(\Omega) \times W^{1,s}_{0}(\Omega)$. Moreover, the well-posedness result of Lemma \ref{thm:adjoint_problem} directly implies the following inf-sup condition for $u \in \mathcal{A}_0$ \cite[Theorem 2.6]{MR2050138}: there exists $\beta >0$ such that
 \begin{equation}\label{eq:inf_sup_u_in_A0}
\beta \|\nabla v\|_{L^{r}(\Omega)} \leq \sup_{w \in W^{1,s}_{0}(\Omega)}\dfrac{Q_{u}(v,w)}{\|\nabla w\|_{L^{s}(\Omega)}} 
\quad \forall v \in W_{0}^{1,r}(\Omega).
\end{equation}

To present the next result, we further restrict $\varepsilon_{\bar{u}}$ in the definition of $\mathcal{A}$ in \eqref{eq:mathcal_A}:
\begin{equation}
\mathcal{A} \subset L^2(\Omega),
\qquad
\mathcal{A} = \bigcup_{\bar{u} \in \mathcal{A}_{0}} B_{\varepsilon_{\bar{u}}}(\bar{u}),
\qquad
0< \varepsilon_{\bar{u}} < \min \{C_{4\hookrightarrow2}^{-2}, \beta C_{2\hookrightarrow r}^{-1}C_{\infty \hookrightarrow s}^{-1}\},
\label{eq:mod_mathcal_A}
\end{equation}
where $C_{2\hookrightarrow r}$ and $C_{\infty\hookrightarrow s}$ denote the constants involved in $W^{1,r}_{0}(\Omega) \hookrightarrow L^{2}(\Omega)$ and $W^{1,s}_{0}(\Omega) \hookrightarrow C(\bar{\Omega})$, respectively. The well-posedness of the problem \eqref{eq:weak_adj_eq} for $u \in \mathcal{A} \setminus \mathcal{A}_0$ is therefore as follows.

\begin{theorem}[well-posedness of the adjoint problem II]\label{thm:well_posed_adjoint2}
Let $u \in \mathcal{A} \setminus \mathcal{A}_0$ and let $y=\mathcal{S}(u)$. Let $r$ be as in \eqref{eq:r}. Then, there is a unique solution $p \in W^{1,r}_{0}(\Omega)$ for \eqref{eq:weak_adj_eq} that satisfies the bound \eqref{eq:stab_adj_eq} with a hidden constant that depends on $u$ and $\# \mathcal{D}$ but is independent of $p$.
\end{theorem}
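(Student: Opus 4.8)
The plan is to realize the operator in \eqref{eq:weak_adj_eq} as a small perturbation of a well-posed adjoint problem whose reaction coefficient is nonnegative, so that the inf-sup condition \eqref{eq:inf_sup_u_in_A0} becomes available. Since $u \in \mathcal{A}\setminus\mathcal{A}_0$, the definition \eqref{eq:mod_mathcal_A} furnishes $\bar u \in \mathcal{A}_0$ with $\|u-\bar u\|_{L^2(\Omega)} < \varepsilon_{\bar u} < \beta\,C_{2\hookrightarrow r}^{-1}C_{\infty\hookrightarrow s}^{-1}$. Keeping the state $y=\mathcal{S}(u)$ frozen, I would introduce the auxiliary form obtained from $Q_u$ by replacing the multiplier $u$ with $\bar u$,
\[
\widetilde{Q}(v,w) := \int_\Omega \nabla w\cdot\nabla v\,\mathrm{d}x + \int_\Omega\Big[\tfrac{\partial a}{\partial y}(\cdot,y) + \bar u\Big] v w\,\mathrm{d}x.
\]
The crucial observation is that $\tfrac{\partial a}{\partial y}(\cdot,y) + \bar u \geq a_0 + \bar u \geq 0$ a.e. in $\Omega$ by \ref{A2} and $\bar u \in \mathcal{A}_0$; hence $\widetilde Q$ has a nonnegative reaction coefficient and the argument of Theorem \ref{thm:adjoint_problem} applies to it, so $\widetilde Q$ satisfies \eqref{eq:inf_sup_u_in_A0} with constant $\beta$.

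Next I would control the perturbation $Q_u - \widetilde Q$. For $v \in W^{1,r}_0(\Omega)$ and $w\in W^{1,s}_0(\Omega)$, the Cauchy-Schwarz inequality together with the embeddings $W^{1,r}_0(\Omega)\hookrightarrow L^2(\Omega)$ (guaranteed by $r\geq 6/5$ when $d=3$) and $W^{1,s}_0(\Omega)\hookrightarrow C(\bar\Omega)$ (guaranteed by $s>d$) gives
\[
\Big|\int_\Omega (u-\bar u) v w\,\mathrm{d}x\Big| \leq \|u-\bar u\|_{L^2(\Omega)}\,\|v\|_{L^2(\Omega)}\,\|w\|_{C(\bar\Omega)} \leq \eta\,\|\nabla v\|_{L^r(\Omega)}\,\|\nabla w\|_{L^s(\Omega)},
\]
with $\eta := \|u-\bar u\|_{L^2(\Omega)}\,C_{2\hookrightarrow r}\,C_{\infty\hookrightarrow s} < \beta$ by the choice of $\varepsilon_{\bar u}$. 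Consequently, for every $v\in W^{1,r}_0(\Omega)$,
\[
\sup_{w\in W^{1,s}_0(\Omega)}\frac{Q_u(v,w)}{\|\nabla w\|_{L^s(\Omega)}} \geq \sup_{w\in W^{1,s}_0(\Omega)}\frac{\widetilde{Q}(v,w)}{\|\nabla w\|_{L^s(\Omega)}} - \eta\,\|\nabla v\|_{L^r(\Omega)} \geq (\beta-\eta)\,\|\nabla v\|_{L^r(\Omega)},
\]
so $Q_u$ inherits the inf-sup condition with the strictly positive constant $\beta-\eta$. Because the perturbation estimate is symmetric in $v$ and $w$, the same computation applied to the transpose form (using that $\widetilde Q$, being an isomorphism on reflexive spaces, also satisfies the transpose inf-sup with constant $\geq\beta$) shows $\sup_{v} Q_u(v,w) > 0$ for every $w\neq 0$. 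With both conditions in hand, the generalized Lax-Milgram theorem \cite[Theorem 2.6]{MR2050138} yields a unique $p\in W^{1,r}_0(\Omega)$ solving \eqref{eq:weak_adj_eq}, together with $\|\nabla p\|_{L^r(\Omega)} \leq (\beta-\eta)^{-1}\|G\|_{W^{-1,r}(\Omega)}$, where $G=\sum_{t\in\mathcal{D}}(y(t)-y_t)\delta_t$.

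To recover \eqref{eq:stab_adj_eq}, I would estimate the data: since $s>d$ ensures $\delta_t\in W^{-1,r}(\Omega)$ with norm bounded by the constant of $W^{1,s}_0(\Omega)\hookrightarrow C(\bar\Omega)$, one gets $\|G\|_{W^{-1,r}(\Omega)} \lesssim \sum_{t}|y(t)-y_t| \lesssim \#\mathcal{D}\,\|y\|_{C(\bar\Omega)} + \sum_t|y_t|$, and Theorem \ref{thm:W1kappa-regularity} bounds $\|y\|_{C(\bar\Omega)} \lesssim \|f-a(\cdot,0)\|_{L^2(\Omega)}(1+\|u\|_{L^2(\Omega)})$; the factors $(1+\|u\|_{L^2(\Omega)})$ and $(\beta-\eta)^{-1}$ are absorbed into the hidden constant, which thus depends on $u$ and $\#\mathcal{D}$ but not on $p$. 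The main obstacle is the first paragraph: one must compare $Q_u$ not with the genuine adjoint form $Q_{\bar u}$ built from $\mathcal{S}(\bar u)$ — whose state differs from $\mathcal{S}(u)$ — but with the frozen-state form $\widetilde Q$, and recognize that its nonnegative reaction coefficient is exactly what makes Theorem \ref{thm:adjoint_problem}, and hence the inf-sup with constant $\beta$, applicable; some care is also needed to ensure $\beta$ may be chosen uniformly over the relevant coefficients, which is legitimate because \ref{A3} bounds $\tfrac{\partial a}{\partial y}(\cdot,y)$ and the states $\mathcal{S}(u)$ are uniformly bounded in $C(\bar\Omega)$.
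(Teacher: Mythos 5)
Your proposal is correct and follows essentially the same route as the paper: the same decomposition of $Q_u$ into the frozen-state form with multiplier $\bar u$ (the paper's $Q_{\bar u}$, in which the coefficient $\frac{\partial a}{\partial y}(\cdot,y)$ keeps $y=\mathcal{S}(u)$) plus the perturbation $((u-\bar u),vw)_{L^{2}(\Omega)}$, the same perturbation bound through $W^{1,r}_{0}(\Omega)\hookrightarrow L^{2}(\Omega)$ and $W^{1,s}_{0}(\Omega)\hookrightarrow C(\bar\Omega)$ giving the inf-sup constant $\beta-C_{2\hookrightarrow r}C_{\infty\hookrightarrow s}\varepsilon_{\bar u}>0$, and the same appeal to \cite[Theorem 2.6]{MR2050138} for existence, uniqueness, and the stability bound \eqref{eq:stab_adj_eq}.

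The one place you genuinely deviate is the verification of the second condition (BNB2). The paper takes $w\in W^{1,s}_{0}(\Omega)$ with $Q_u(v,w)=0$ for all $v\in W^{1,r}_{0}(\Omega)$, uses $W^{1,s}_{0}(\Omega)\hookrightarrow W^{1,r}_{0}(\Omega)$ to set $v=w$, and concludes $w\equiv 0$ from the $H^{1}_{0}(\Omega)$-coercivity $Q_u(w,w)\geq(1-\varepsilon_{\bar u}C_{4\hookrightarrow 2}^{2})\|\nabla w\|_{L^{2}(\Omega)}^{2}$; this is precisely where the other constraint $\varepsilon_{\bar u}<C_{4\hookrightarrow 2}^{-2}$ in \eqref{eq:mod_mathcal_A} enters. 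You instead perturb the transpose inf-sup of the frozen-state form, which is legitimate because that form is an isomorphism between reflexive spaces, so its transpose inherits the same inf-sup constant. Your variant buys a quantitative transpose inf-sup constant and needs only the single smallness condition $\varepsilon_{\bar u}<\beta C_{2\hookrightarrow r}^{-1}C_{\infty\hookrightarrow s}^{-1}$, while the paper's argument is more elementary. Two further points are to your credit: your insistence on comparing with the frozen-state form rather than with the adjoint form built from $\mathcal{S}(\bar u)$ is exactly the right reading of the paper's (slightly loose) notation $Q_{\bar u}$ and of the constant $\beta$ in \eqref{eq:inf_sup_u_in_A0}, a subtlety the paper passes over silently; and your explicit estimate of the Dirac data in $W^{-1,r}(\Omega)$ combined with the $C(\bar\Omega)$ bound on $y$ from Theorem \ref{thm:cont_reg} spells out what the paper obtains by simply citing the bound (2.5) of \cite[Theorem 2.6]{MR2050138}.
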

\begin{proof}
Let $u \in \mathcal{A} \setminus \mathcal{A}_0$. We rewrite the adjoint problem \eqref{eq:weak_adj_eq} as follows:
\begin{equation}\label{eq:adj_problem_op}
p \in W^{1,r}_{0}(\Omega): \quad Q_{u}(p,w) = \sum_{t \in \mathcal{D}}\left<(y(t)-y_{t})\delta_{t} ,w \right> \quad \forall w \in W^{1,s}_{0}(\Omega).
\end{equation}
To examine the well-posedness of \eqref{eq:adj_problem_op}, we verify the conditions (BNB1) and (BNB2) in \cite[Theorem 2.6]{MR2050138} for $Q_{u}$. 

(BNB1): We first verify the inf-sup condition for $Q_{u}$ in \cite[Theorem 2.6, (BNB1)]{MR2050138}. Since $u \in \mathcal{A}$, from \eqref{eq:mod_mathcal_A} there exists $\bar{u} \in \mathcal{A}_{0}$ and an open ball $B_{\varepsilon_{\bar{u}}}(\bar{u}) \subset L^{2}(\Omega)$ such that $u \in B_{\varepsilon_{\bar{u}}}(\bar{u})$. Let us now observe that for all $v \in W^{1,r}_{0}(\Omega)$ and for all $w \in W^{1,s}_{0}(\Omega)$, $Q_{u}(v,w) = Q_{\bar{u}}(v,w) + ((u-\bar{u}),vw)_{L^{2}(\Omega)}$. It therefore follows from \eqref{eq:inf_sup_u_in_A0} that
\begin{equation*}
\tilde{\beta} \|\nabla v\|_{L^{r}(\Omega)} \leq \sup_{w \in W^{1,s}_{0}(\Omega)}\dfrac{Q_{u}(v,w)}{\|\nabla w\|_{L^{s}(\Omega)}} \qquad \forall v \in W_{0}^{1,r}(\Omega),
\end{equation*}
where $\tilde{\beta} := \beta - C_{2\hookrightarrow r}C_{\infty\hookrightarrow s}\varepsilon_{\bar{u}}$. The modification in \eqref{eq:mod_mathcal_A} leads to the conclusion that $\tilde{\beta}>0$.

(BNB2): We now verify the second condition for $Q_{u}$ in \cite[Theorem 2.6, (BNB2)]{MR2050138}. Let $w \in W_{0}^{1,s}(\Omega)$ be such that
\begin{equation}\label{eq:bilinear_adj_eq_zero}
Q_{u}(v,w) = 0 \quad \forall v \in W^{1,r}_{0}(\Omega). 
\end{equation}
We need to prove that $w \equiv 0$. To do this, we first note that \eqref{eq:bilinear_adj_eq_zero} holds for all $v \in W_{0}^{1,s}(\Omega)$. In fact, since $r<s$, it follows that $W_{0}^{1,s}(\Omega) \hookrightarrow W_{0}^{1,r}(\Omega)$. We can therefore set $v=w$ in \eqref{eq:bilinear_adj_eq_zero} and proceed as in Remark \ref{rmk:wp_derivatives_problems} to obtain that
\begin{equation*}
0 = Q_{u}(w,w) \geq  (1-\varepsilon_{\bar{u}}C^{2}_{4 \hookrightarrow 2})\|\nabla w\|^{2}_{L^{2}(\Omega)} \geq 0.
\end{equation*} 
From this inequality and \eqref{eq:mod_mathcal_A} it follows that $w \equiv 0$, as we intended to show. 

The stability estimate \eqref{eq:stab_adj_eq} follows from an application of the bound (2.5) in \cite[Theorem 2.6]{MR2050138}.
\end{proof}

\subsubsection{\EO{The control-to-adjoint state map: differentiability}}

\EO{We define the control-to-adjoint state map as follows: $\Phi: \mathcal{A} \rightarrow W_{0}^{1,r}(\Omega)$, which, given a control $u \in \mathcal{A}$, associates to it the unique adjoint state $p = \Phi(u)$ that solves \eqref{eq:weak_adj_eq}. Here, $\mathcal{A}$ is described in \eqref{eq:mod_mathcal_A}. In light of Theorems \ref{thm:adjoint_problem} and \ref{thm:well_posed_adjoint2}, the map $\Phi$ is well-defined. In the following result, we show that $\Phi$ is differentiable.}


\begin{theorem}[differentiability properties of $u \mapsto p$]\label{thm:diff_u_p}
\EO{Let $r$ be as in \eqref{eq:r}. Then, the map $\Phi: \mathcal{A} \rightarrow W_{0}^{1,r}(\Omega)$ is of class $C^{1}$. Moreover, for every $u \in \mathcal{A}$ and for every $h \in L^{2}(\Omega)$, the function $\eta = \Phi'(u)h \in W_{0}^{1,r}(\Omega)$ is the unique solution to}
\begin{multline}\label{eq:DPhi}
 \int_{\Omega}\nabla w \cdot \nabla \eta \mathrm{d}x +  \int_{\Omega}\left[ \frac{\partial a}{\partial y}(\cdot,y) + u \right]\eta w \mathrm{d}x \\
= \sum_{t \in \mathcal{D}} \langle z(t)\delta_{t},w \rangle - \int_{\Omega}\left[ \frac{\partial^{2} a}{\partial y^{2}}(\cdot,y)z + h \right]pw \mathrm{d}x
\quad \forall w \in W_0^{1,s}(\Omega),
\end{multline}  
where $r^{-1} + s^{-1} = 1$, $y = \mathcal{S}(u)$, and $z = \mathcal{S}'(u)h$.

\end{theorem}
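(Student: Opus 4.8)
The plan is to deduce the result from the implicit function theorem (IFT), exploiting that the adjoint equation \eqref{eq:weak_adj_eq} is \emph{linear} in $p$ and that the control-to-state map $\mathcal{S}$ is of class $C^{2}$ by Theorem~\ref{thm:properties_S}. To this end I would introduce the map
\[
 G: \mathcal{A} \times W_{0}^{1,r}(\Omega) \rightarrow W^{-1,r}(\Omega),
 \qquad
 \langle G(u,p),w\rangle := Q_{u}(p,w) - \sum_{t\in\mathcal{D}}(\mathcal{S}(u)(t)-y_{t})\,w(t),
\]
defined for all $w \in W_{0}^{1,s}(\Omega)$, where $Q_{u}$ is the bilinear form of the previous subsection. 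Each contribution lands in $W^{-1,r}(\Omega)=(W_{0}^{1,s}(\Omega))'$ because $p \in W_0^{1,r}(\Omega)\hookrightarrow L^2(\Omega)$, $w \in W_0^{1,s}(\Omega)\hookrightarrow C(\bar\Omega)$, $\partial a/\partial y(\cdot,\mathcal{S}(u))\in L^\infty(\Omega)$, and the point evaluations are bounded on $C(\bar\Omega)$. By Theorems~\ref{thm:adjoint_problem} and \ref{thm:well_posed_adjoint2}, the adjoint state $\Phi(u)$ is then characterized by $G(u,\Phi(u))=0$.

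The two ingredients required by the IFT are: (a) $G$ is of class $C^{1}$, and (b) the partial derivative $\partial_p G(u,p)$ is an isomorphism from $W_0^{1,r}(\Omega)$ onto $W^{-1,r}(\Omega)$. Since $G$ is affine in $p$, we have $\langle \partial_p G(u,p)\zeta, w\rangle = Q_u(\zeta,w)$, which is exactly the operator of the well-posed adjoint problem; the conditions (BNB1)--(BNB2) established in the proofs of Theorems~\ref{thm:adjoint_problem} and \ref{thm:well_posed_adjoint2} (via \cite[Theorem 2.6]{MR2050138}) guarantee that it is an isomorphism, so (b) holds. For (a), the term $\int_\Omega u\,p\,w$ is bilinear and continuous in $(u,p)$, hence smooth, while $u\mapsto \mathcal{S}(u)(t)$ is $C^{2}$ because $\mathcal{S}:\mathcal{A}\to C^{0,\varsigma}(\bar\Omega)$ is $C^{2}$ and point evaluation is a bounded functional on $C(\bar\Omega)$. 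The only remaining dependence on $u$ enters through the coefficient $u\mapsto \partial a/\partial y(\cdot,\mathcal{S}(u))$ multiplying $p$.

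The main obstacle is precisely the differentiability of the superposition (Nemytskii) operator $N: C(\bar\Omega)\to L^\infty(\Omega)$, $N(y)=\partial a/\partial y(\cdot,y)$. Such operators are generally \emph{not} Fréchet differentiable in $L^\infty$-type topologies, so the Lipschitz bound on $\partial^2 a/\partial y^2$ in \ref{A3} is essential: a first-order Taylor expansion with integral remainder gives, for a.e.~$x\in\Omega$,
\[
\Big|\tfrac{\partial a}{\partial y}(x,y+\delta y) - \tfrac{\partial a}{\partial y}(x,y) - \tfrac{\partial^2 a}{\partial y^2}(x,y)\,\delta y(x)\Big| \leq \tfrac{1}{2}C_{a,\mathfrak{m}}\,|\delta y(x)|^{2} \leq \tfrac{1}{2}C_{a,\mathfrak{m}}\,\|\delta y\|_{C(\bar\Omega)}^{2},
\]
so the remainder is $o(\|\delta y\|_{C(\bar\Omega)})$ in $L^\infty(\Omega)$ and $N$ is Fréchet differentiable with $N'(y)\delta y = \partial^2 a/\partial y^2(\cdot,y)\delta y$. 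Composing with the $C^{2}$ map $\mathcal{S}$ and observing that the trilinear map $L^\infty(\Omega)\times L^2(\Omega)\times C(\bar\Omega)\to\mathbb{R}$, $(\beta,p,w)\mapsto\int_\Omega \beta pw\,\mathrm{d}x$, is continuous, we conclude that $G$ is of class $C^{1}$.

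With (a) and (b) in hand, the IFT yields that $\Phi$ is $C^{1}$ on $\mathcal{A}$ and that $\eta=\Phi'(u)h$ solves $\partial_p G(u,\Phi(u))\eta = -\partial_u G(u,\Phi(u))h$. Writing $z=\mathcal{S}'(u)h$, the derivatives identified above give $\tfrac{d}{du}\mathcal{S}(u)(t)[h]=z(t)$ and $\tfrac{d}{du}\big(\partial a/\partial y(\cdot,\mathcal{S}(u))\big)[h]=\partial^2 a/\partial y^2(\cdot,y)z$, so that for all $w\in W_0^{1,s}(\Omega)$,
\[
Q_u(\eta,w) = \sum_{t\in\mathcal{D}} z(t)\,w(t) - \int_\Omega\Big[\tfrac{\partial^2 a}{\partial y^2}(\cdot,y)z + h\Big]pw\,\mathrm{d}x,
\]
which is exactly \eqref{eq:DPhi}; uniqueness of $\eta$ is immediate from the isomorphism property in (b).
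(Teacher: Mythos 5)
Your proposal is correct and follows essentially the same route as the paper: characterize the adjoint state by $G(u,\Phi(u))=0$, verify that $\partial_p G(u,p)$ is an isomorphism from $W_0^{1,r}(\Omega)$ onto $W^{-1,r}(\Omega)$ via the (BNB1)--(BNB2) conditions established in Theorems \ref{thm:adjoint_problem} and \ref{thm:well_posed_adjoint2}, and apply the implicit function theorem, recovering \eqref{eq:DPhi} by differentiating the identity $G(u,\Phi(u))=0$. Your explicit Taylor-remainder argument for the Fr\'echet differentiability of the Nemytskii operator $y \mapsto \partial a/\partial y(\cdot,y)$, based on the Lipschitz bound in \ref{A3}, simply fills in a step that the paper compresses into the single assertion that \ref{A1}--\ref{A3} imply $G$ is of class $C^{1}$.
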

\begin{proof} 
First, we show that problem \eqref{eq:DPhi} is well-posed. To this end, we note that the involved bilinear form $Q_u$ satisfies the conditions (BNB1) and (BNB2) in \cite[Theorem 2.6]{MR2050138} provided that $u \in \mathcal{A}$; see Theorems \ref{thm:adjoint_problem} and \ref{thm:well_posed_adjoint2}. We also note that, since $y \in H_0^1(\Omega) \cap C^{0,\varsigma}(\bar \Omega)$ (cf.~Theorem \ref{thm:cont_reg}), $z \in H_0^1(\Omega) \cap C^{0,\varsigma}(\bar \Omega)$ (cf.~Theorem \ref{thm:properties_S}) and $p \in W_0^{1,r}(\Omega)$ (cf.~Theorems \ref{thm:adjoint_problem} and \ref{thm:well_posed_adjoint2}), it can be proved that  the forcing term in \eqref{eq:DPhi} belongs to $W^{-1,r}(\Omega)$.  Thus, we have all the ingredients for the application of \cite[Theorem 2.6]{MR2050138} and can deduce that problem \eqref{eq:DPhi} is well-posed.

To prove \EO{the differentiability of $\Phi$}, we define
\[
G: \mathcal{A} \times W_0^{1,r}(\Omega) \rightarrow W^{-1,r}(\Omega), 
\quad 
G(u,p) := -\Delta p + \left( \tfrac{\partial a}{\partial y}(\cdot,y)+u \right)p  - \sum_{t \in \mathcal{D}}(y(t)-y_{t})\delta_{t},
\]
where $y = \mathcal{S}(u)$. From \ref{A1}--\ref{A3} it follows that $G$ is well-defined and that $G$ is of class $C^1$. Moreover, for $(\bar{u},\bar{p}) \in \mathcal{A} \times W_0^{1,r}(\Omega)$, the derivative $\tfrac{\partial G}{\partial p} (\bar u, \bar p)$ is given by
\[
\frac{\partial G}{\partial p} (\bar u, \bar p) : W_0^{1,r}(\Omega) \rightarrow W^{-1,r}(\Omega),
\quad
h \mapsto -\Delta h + \left( \frac{\partial a}{\partial y} (\cdot,\bar{y}) + \bar{u} \right) h,
\]
where $\bar{y} = \mathcal{S}(\bar{u})$. The map $\partial G/ \partial p (\bar u, \bar p)$ is linear and continuous. It satisfies the bound

\begin{equation}
\left\| \frac{\partial G} {\partial p} (\bar u, \bar p) h \right \|_{W^{-1,r}(\Omega)} \lesssim (1 + C_{a,\mathfrak{m}} |\Omega|^{\frac{1}{2}} + \| \bar u \|_{L^2(\Omega)}) \| \nabla h \|_{L^r(\Omega)}
\quad
\forall h \in W_0^{1,r}(\Omega).
\label{eq:dfdu_bound}
\end{equation}
Applying the results of Theorems \ref{thm:adjoint_problem} and \ref{thm:well_posed_adjoint2}, we deduce that $\partial G/ \partial p (\bar u, \bar p)$ is a bijection. From this, from the bound \eqref{eq:dfdu_bound}, and from the application of the open mapping theorem, we can conclude that $\partial G/ \partial p (\bar u, \bar p)$ is an isomorphism. \EO{An application of the implicit function theorem thus implies that $\Phi$ is of class $C^{1}$.}
Finally, the fact that $\eta = \Phi'(u)h$ is the solution of \eqref{eq:DPhi} follows from differentiating the relation $G(u,\Phi(u)) = 0$. This concludes the proof.
\end{proof}

\subsubsection{Regularity results}
We now analyze suitable regularity properties for the solution of the adjoint problem, which are important to derive regularity properties for locally optimal controls. To begin our analysis, we introduce the set 
\begin{equation}\label{eq:set_mathcal_E}
\mathcal{E}:=\left\lbrace t \in \mathcal{D}: y(t)\neq y_{t} \right\rbrace.
\end{equation}
We immediately note that if $\mathcal{E} = \emptyset$, then $p \equiv 0$.

Following the arguments in the proofs of \cite[Theorem 3.4]{MR2434067} and \cite[Lemma 5.2]{MR3973329}, we now state and prove the following regularity result.

\begin{theorem}[local regularity]\label{thm:H^2-reg-p} 
Let $r$ be as in \eqref{eq:r}. Let $p \in W^{1,r}_{0}(\Omega)$ be the solution of problem \eqref{eq:weak_adj_eq}, where $y = \mathcal{S}(u)$ and $u \in \mathbb{U}_{ad}$. If $\Omega$ is a convex polytope, then
\begin{equation}\label{eq:H2_Holder_reg_adj}
p \in H^{2}\left(\Omega \setminus \cup_{t \in \mathcal{E}} \bar{B}_{t}\right) \cap C^{0,1}(\bar{\Omega} \setminus \cup_{t \in \mathcal{E}}B_{t}).
\end{equation}
Here, $B_{t} \subset \Omega$ denotes an open ball centered at $t \in \mathcal{E}$ of strictly positive radius.
\end{theorem}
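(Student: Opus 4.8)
The plan is to localize the equation away from the finitely many interior points of $\mathcal{E}$, where the Dirac data concentrates, and then to invoke convex-domain elliptic regularity for the resulting problem, whose right-hand side will turn out to be in $L^2(\Omega)$ and in fact in $L^\infty(\Omega)$. First I would record that the reaction coefficient $c := \tfrac{\partial a}{\partial y}(\cdot,y) + u$ belongs to $L^\infty(\Omega)$: assumption \ref{A3} together with $y \in L^\infty(\Omega)$ (Theorem~\ref{thm:well-posedness-semilinearPDE}) bounds $\tfrac{\partial a}{\partial y}(\cdot,y)$, while $u \in \mathbb{U}_{ad}$ gives $\mathtt{a}\le u\le\mathtt{b}$. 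Writing $\mu := \sum_{t\in\mathcal{E}}(y(t)-y_t)\delta_t$, the adjoint state solves $-\Delta p + cp = \mu$ in the distributional sense, with $\operatorname{supp}\mu = \mathcal{E}\subset\Omega$ a finite set of interior points. Since $\mathcal{D}\subset\Omega$ is an open domain, no Dirac mass sits on $\partial\Omega$, so near the boundary and the corners (edges) of $\Omega$ the equation carries vanishing forcing.

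Next I would fix, for each $t\in\mathcal{E}$, a smaller ball $B_t'\subset\subset B_t$ and choose a cutoff $\eta\in C^{\infty}(\bar\Omega)$ with $\eta\equiv 0$ on $\cup_t B_t'$ and $\eta\equiv 1$ on $\bar\Omega\setminus\cup_t B_t$. The product $\hat p := \eta p\in W^{1,r}_{0}(\Omega)$ (vanishing trace times a smooth factor) then satisfies, distributionally,
\[
-\Delta\hat p = -c\hat p - 2\nabla\eta\cdot\nabla p - p\,\Delta\eta \;=:\; F,
\]
because $\eta\mu = 0$. I would then verify that $F\in L^2(\Omega)$. The commutator terms $\nabla\eta\cdot\nabla p$ and $p\,\Delta\eta$ are supported in the interior annuli $\{0<\eta<1\}\subset\subset\Omega\setminus\mathcal{E}$; there the equation reads $-\Delta p = -cp$ with $c\in L^\infty$, and interior elliptic regularity (bootstrapping this identity) makes $p$ locally of class $W^{2,q}$ for every $q<\infty$, so $\nabla p$ is bounded and both terms lie in $L^\infty$. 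For $c\hat p$ I would use $W^{1,r}_{0}(\Omega)\hookrightarrow L^{r^\ast}(\Omega)$ with $1/r^\ast=1/r-1/d$; the range of $r$ in \eqref{eq:r} gives $r^\ast\ge 2$, whence $\hat p\in L^2(\Omega)$ and $c\hat p\in L^2(\Omega)$.

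With $F\in L^2(\Omega)$ and $\Omega$ convex, I would identify $\hat p$ with the canonical $H^2$-solution $v$ of $-\Delta v = F$, $v|_{\partial\Omega}=0$, furnished by the convex-domain results used in Theorem~\ref{thm:H^2-reg-y} (\cite[Theorems 3.2.1.2 and 4.3.1.4]{MR3396210} for $d=2$, and \cite[Theorem 3.2.1.2]{MR3396210} with \cite[section 4.3.1]{MR2641539} for $d=3$). Both $\hat p$ and $v$ are distributional solutions lying in $W^{1,r}_{0}(\Omega)$, so a standard uniqueness/duality argument for very weak solutions (testing $-\Delta(\hat p-v)=0$ against the solution of an auxiliary problem with $L^{s}$ data, $s$ the conjugate of $r$) yields $\hat p=v\in H^2(\Omega)$. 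As $\hat p = p$ on $\Omega\setminus\cup_t\bar B_t$, this proves the $H^2$ assertion in \eqref{eq:H2_Holder_reg_adj}. Moreover, $H^2(\Omega)\hookrightarrow L^\infty(\Omega)$ for $d\le 3$, so now $c\hat p\in L^\infty(\Omega)$ and therefore $F\in L^{\kappa}(\Omega)$ for every $\kappa<\infty$.

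Finally, for the Lipschitz assertion I would feed $F\in L^{\kappa}(\Omega)$ with $\kappa>d$ into the corner (and edge, for $d=3$) singularity decomposition of the Dirichlet Laplacian on a convex polytope: $\hat p = \hat p_{\mathrm{reg}} + \sum_j \chi_j S_j$, where $\hat p_{\mathrm{reg}}\in W^{2,\kappa}(\Omega)\hookrightarrow C^{0,1}(\bar\Omega)$ and each singular function $S_j$ behaves, near its associated vertex or edge of opening $\omega<\pi$, like $\rho^{\pi/\omega}$ with $\rho$ the distance to the singular set. Convexity forces the exponent $\pi/\omega>1$, so $|\nabla S_j|\lesssim\rho^{\pi/\omega-1}$ remains bounded and $S_j\in C^{0,1}$; hence $\hat p\in C^{0,1}(\bar\Omega)$, and restricting to $\{\eta=1\}$ gives $p\in C^{0,1}(\bar\Omega\setminus\cup_t B_t)$, following the arguments of \cite[Theorem 3.4]{MR2434067} and \cite[Lemma 5.2]{MR3973329}. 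The hard part will be precisely this last step: the borderline embedding $H^2\hookrightarrow C^{0,1}$ is false (it fails at the endpoint for $d=2$ and is genuinely absent for $d=3$), so Lipschitz continuity up to the corners cannot be read off from the $H^2$ regularity and must instead be extracted from the strict inequality $\pi/\omega>1$ for convex openings. A secondary technical point is the identification of the low-integrability object $\hat p\in W^{1,r}_{0}(\Omega)$ with the regular solution, which requires the duality/uniqueness argument rather than a direct energy estimate.
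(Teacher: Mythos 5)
Your proposal is correct and rests on the same core device as the paper's proof: multiply $p$ by a smooth cutoff that vanishes near the points of $\mathcal{E}$ (so the Dirac data is annihilated), and apply convex-domain elliptic regularity to the localized problem, with a bootstrap to reach $H^2$ and then $C^{0,1}$. Within that common frame you deviate in two ways that are worth recording. First, where the paper runs a global bootstrap --- it only claims $\mathfrak{g}\in L^r(\Omega)$ at the outset, invokes Grisvard's $W^{2,r}$ regularity on convex domains, and uses the resulting $W^{2,r}$ regularity of $p$ away from the balls to upgrade the right-hand side to $L^2(\Omega)$ --- you dispose of the commutator terms $\nabla\eta\cdot\nabla p$ and $p\,\Delta\eta$ in one stroke via interior regularity: they are supported in annuli compactly contained in $\Omega\setminus\mathcal{E}$, where $-\Delta p=-cp$ with $c\in L^\infty(\Omega)$ yields $p\in W^{2,q}_{\mathrm{loc}}$ for every $q<\infty$, hence these terms are bounded; combined with $W^{1,r}_0(\Omega)\hookrightarrow L^2(\Omega)$ (precisely where the restriction $r\geq 6/5$ for $d=3$ in \eqref{eq:r} enters) this gives the right-hand side in $L^2(\Omega)$ directly, shortening the argument. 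Second, for the Lipschitz endpoint the paper bootstraps once more and then cites \cite[Lemma 4.1]{MR3973329} together with the equivalence $W^{1,\infty}(\Omega)=C^{0,1}(\bar\Omega)$, whereas you rederive that ingredient from the corner/edge singular-function decomposition with singular exponents exceeding $1$ on convex polytopes. That is the argument underlying the cited lemma and is legitimate, but your description is loose in three dimensions: at the vertices of a polyhedron the exponents are not of the form $\pi/\omega$; they come from eigenvalues of the Laplace--Beltrami operator on the spherical domain cut out by the tangent cone, and one must argue (using convexity) that these, as well as the edge exponents, clear the Lipschitz threshold. Citing the known $W^{1,\infty}$ regularity result for convex polytopes, as the paper does, avoids this delicacy. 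Finally, your explicit duality/uniqueness step identifying the $W^{1,r}_0(\Omega)$ distributional solution of the localized problem with the variational one makes precise a point the paper leaves implicit; this identification is indeed needed in both arguments.
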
 
\begin{proof}
We assume that $\mathcal{E} \neq \emptyset$ and proceed as in the proof of \cite[Lemma 5.2]{MR3973329} and \cite[Theorem 3.4]{MR4438718}. For every $t \in \mathcal{E}$, let $B_{t}$ and $C_{t}$ be open balls with center in $t$ and strictly positive radii such that $B_{t} \Subset C_{t} \subset \Omega$. Let $\psi: \Omega \rightarrow [0,1]$ be a smooth function satisfying the following conditions:
\begin{equation}
 \psi \equiv 1 \textrm{  in  } \Omega \setminus \bigcup_{t \in \mathcal{E}}\bar{C}_{t},
 \qquad
 \psi \equiv 0  \textrm{  in  } \bigcup_{t \in \mathcal{E}}B_{t},
 \qquad
 \psi > 0 \textrm{  in  } \bigcup_{t \in \mathcal{E}}C_{t} \setminus \bigcup_{t \in \mathcal{E}}\bar{B}_{t}.
 \label{eq:properties_of_psi}
\end{equation}
Define $\Psi := \psi p$. Since $p$ corresponds to the weak solution of \eqref{eq:weak_adj_eq}, simple calculations show that $\Psi$ can be seen as the distributional solution of the following problem:
\begin{equation}\label{eq:psi_equation}
-\Delta \Psi = -p\Delta \psi-2\nabla p \cdot \nabla \psi - \tfrac{\partial a}{\partial y}(\cdot,y)p\psi -up\psi
\text{ in } \Omega, \quad \Psi = 0
\text{ on } \partial \Omega.
\end{equation} 
Define $\mathfrak{g}:= -p\Delta \psi-2\nabla p \cdot \nabla \psi - \tfrac{\partial a}{\partial y}(\cdot,y)p\psi -up\psi$. We note that
\[
\textrm{supp}\left[ \sum_{t \in \mathcal{D}}  (y(t) - y_t) \delta_t \right] = \bigcup_{t \in \mathcal{E}} \{ t \}
\implies
\left[ \sum_{t \in \mathcal{D}}  (y(t) - y_t) \delta_t \right] \psi \equiv 0,
\]
because of the construction of $\psi$ (cf. \eqref{eq:properties_of_psi}). From the properties of $p$, $\psi$,  $a$, $y$, and $u$, it follows that $\mathfrak{g} \in H^{-1}(\Omega)$. This implies that there is a unique $\Psi \in H_0^1(\Omega)$ which solves \eqref{eq:psi_equation}. Moreover, it can also be proved that $\mathfrak{g} \in L^{r}(\Omega)$.
This, the convexity of $\Omega$, and the regularity result of \cite[Theorem 4.3.2.4]{MR3396210} for $d = 2$ and \cite[Theorem 4.3.2]{MR2641539} for $d = 3$
guarantee that $\Psi \in H_0^1(\Omega) \cap W^{2,r}(\Omega)$. This regularity result and the properties of $\psi$ show that $p \in W^{2,r}(\Omega \setminus \cup_{t \in \mathcal{E}}\bar{B}_{t})$. 

We now use a bootstrap argument to improve the previously derived regularity result for $p$. For this purpose, we use the embedding $W^{2,r}(\Omega \setminus \cup_{t \in \mathcal{E}}\bar{B}_{t}) \hookrightarrow H^{1}(\Omega \setminus \cup_{t \in \mathcal{E}}\bar{B}_{t})$, which holds because $r$ is as in \eqref{eq:r}, to obtain that $\mathfrak{g} \in L^{2}(\Omega)$. By applying \cite[Theorem 4.3.1.4]{MR3396210} for $d = 2$ and \cite[Theorem 4.3.2]{MR2641539} for $d = 3$, we obtain that $\Psi \in H ^{2}(\Omega)$. From this we conclude that $p \in H^{2}(\Omega \setminus \cup_{t \in \mathcal{E}}\bar{B}_{t})$. Based on a Sobolev embedding, this shows that $p \in W^{1,6}(\Omega \setminus \cup_{t \in \mathcal{E}}\bar{B}_{t})$. A second application of a bootstrap argument with the improved derived regularity of $p$, in conjunction with \cite[Lemma 4.1]{MR3973329}, allow us to obtain that $\Psi \in W^{1,\infty}(\Omega)$. Relying on \cite[Theorem 4.1]{MR2177410}, which provides the equivalence $W^{1,\infty}(\Omega)=C^{0,1}(\bar{\Omega})$, and the regularity properties of $\psi$, we can finally conclude that $p \in C^{0,1}(\bar{\Omega} \setminus \cup_{t \in \mathcal{E}}B_{t})$.
\end{proof} 


\subsection{Optimality conditions}

In this section, we analyze first and second order optimality conditions for problem \eqref{eq:weak_ocp}--\eqref{eq:weak_state_eq}. Since this optimal control problem is not convex, the optimality conditions are discussed in the context of local solutions in $L^2(\Omega)$ \cite[page 207]{MR2583281}. We say that $\bar{u} \in \mathbb{U}_{ad}$ is a \emph{local solution} of problem \eqref{eq:weak_ocp}--\eqref{eq:weak_state_eq} in the sense of $L^{2}(\Omega)$ if there exists $\varepsilon >0$ such that
\begin{equation*}
J(\mathcal{S}(\bar{u}),\bar{u}) \leq J(\mathcal{S}(u),u) \quad \forall u \in \mathbb{U}_{ad}: \| u - \bar{u}\|_{L^{2}(\Omega)} \leq \varepsilon.
\end{equation*}
$\bar{u} \in \mathbb{U}_{ad}$ is called a \emph{strict local solution} in the sense of $L^{2}(\Omega)$ if there exists $\varepsilon>0$ such that
$
J(\mathcal{S}(\bar{u}),\bar{u}) < J(\mathcal{S}(u),u)
$
for all  $u \in \mathbb{U}_{ad} \setminus \{ \bar u\}$ such that $\| u - \bar{u}\|_{L^{2}(\Omega)} \leq \varepsilon$.

\subsubsection{The reduced cost functional}
To derive first and second order optimality conditions, we introduce the reduced cost functional $j$ as follows:
\begin{equation}
j: \mathcal{A} \rightarrow \mathbb{R}, \qquad j(u) := J(\mathcal{S}(u),u).
\end{equation}
We recall that $\mathcal{A} \subset L^2(\Omega)$ is defined in \eqref{eq:mod_mathcal_A}. We now analyze differentiability properties of the cost functional $j$.

\begin{lemma}[differentiability of $j$]
\label{lem:diff_j}
Let $u \in \mathcal{A}$, let $y = \mathcal{S}(u)$, and let $p = \Phi (u)$, where $\Phi$ is defined in Theorem \ref{thm:diff_u_p}.
The functional $j$ is of class $C^{2}$ and the derivatives of $j$ are given by the following expressions:
\begin{align}
j'(u)h & = \int_{\Omega}(\alpha u - yp)h \mathrm{d}x \quad \forall h \in L^{2}(\Omega), \label{eq:Dj}
\\
j''(u)(h_{1},h_{2}) & = \int_{\Omega}(\alpha h_{2} - z_{2}p - y\eta_{2})h_{1} \mathrm{d}x = \int_{\Omega}(\alpha h_{1} - z_{1}p - y\eta_{1})h_{2} \mathrm{d}x \label{eq:DDj}
\end{align}
for all $h_{1}, h_{2} \in L^{2}(\Omega)$. Here, $z_{i} = \mathcal{S}'(u)h_{i}$ and $\eta_{i} = \Phi'(u)h_{i}$, where $i \in \{1,2\}$.
\end{lemma}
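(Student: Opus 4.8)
The plan is to derive both formulas from the chain rule, the only nontrivial content being the identification of the point-evaluation terms with an integral against the (singular) adjoint state. First I would establish that $j$ is of class $C^2$: each point-evaluation functional $y\mapsto y(t)$ is linear and continuous on $C(\bar\Omega)$, so $J$ is a smooth (quadratic) map on $C(\bar\Omega)\times L^2(\Omega)$; since $\mathcal{S}:\mathcal{A}\to D(\Omega)\hookrightarrow C^{0,\varsigma}(\bar\Omega)$ is of class $C^2$ (Theorem~\ref{thm:properties_S}), the composition $j=J\circ(\mathcal{S},\mathrm{id})$ is of class $C^2$. The chain rule then gives
\[
j'(u)h=\sum_{t\in\mathcal{D}}(y(t)-y_t)\,z(t)+\alpha\int_\Omega uh\,\mathrm{d}x,\qquad z=\mathcal{S}'(u)h.
\]

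The core step is to show that $\sum_{t\in\mathcal{D}}(y(t)-y_t)z(t)=-\int_\Omega hyp\,\mathrm{d}x$. For this I would first test the adjoint equation \eqref{eq:weak_adj_eq} with $w=z$. This is admissible because $z=\mathcal{S}'(u)h\in D(\Omega)\hookrightarrow W^{1,\kappa}(\Omega)$ with $\kappa>d$ (Theorem~\ref{thm:properties_S} and its proof), so, choosing the admissible exponent $s$ with $s\le\kappa$, we have $z\in W^{1,s}_0(\Omega)$; this yields $\sum_{t}(y(t)-y_t)z(t)=Q_u(p,z)$. To identify $Q_u(p,z)$ with $-\int_\Omega hyp\,\mathrm{d}x$ I would approximate $p$ by $p_n\in C_c^\infty(\Omega)$ with $p_n\to p$ in $W^{1,r}_0(\Omega)$, hence also in $L^2(\Omega)$ by the embedding $W^{1,r}_0(\Omega)\hookrightarrow L^2(\Omega)$, test the linearized equation \eqref{eq:DS} with $v=p_n\in H_0^1(\Omega)$, and pass to the limit. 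The gradient term converges through the $L^s$--$L^r$ pairing ($\nabla z\in L^s(\Omega)$, $\nabla p_n\to\nabla p$ in $L^r(\Omega)$), while the lower-order term and the right-hand side converge because $[\partial a/\partial y(\cdot,y)+u]z$ and $hy$ belong to $L^2(\Omega)$ and $p_n\to p$ in $L^2(\Omega)$. Combining the two identities proves \eqref{eq:Dj}.

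For the second derivative I would differentiate the identity $j'(u)h=\int_\Omega(\alpha u-yp)h\,\mathrm{d}x$ directly in $u$, using that $u\mapsto y=\mathcal{S}(u)$ is $C^2$ into $L^\infty(\Omega)$ (Theorem~\ref{thm:properties_S}) and $u\mapsto p=\Phi(u)$ is $C^1$ into $W^{1,r}_0(\Omega)\hookrightarrow L^2(\Omega)$ (Theorem~\ref{thm:diff_u_p}). Since the product map $(y,p)\mapsto yp$ is bilinear and continuous from $L^\infty(\Omega)\times L^2(\Omega)$ into $L^2(\Omega)$, the map $u\mapsto \mathcal{S}(u)\Phi(u)$ is $C^1$ into $L^2(\Omega)$ with derivative $z_1p+y\eta_1$ in the direction $h_1$, where $z_1=\mathcal{S}'(u)h_1$ and $\eta_1=\Phi'(u)h_1$ solves \eqref{eq:DPhi}. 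Writing $j'(u)h_2=\alpha\int_\Omega uh_2\,\mathrm{d}x-\int_\Omega(yp)h_2\,\mathrm{d}x$ and differentiating in the direction $h_1$ then gives $j''(u)(h_1,h_2)=\int_\Omega(\alpha h_1-z_1p-y\eta_1)h_2\,\mathrm{d}x$; the symmetry of $j''(u)$, guaranteed by $j\in C^2$, furnishes the second representation in \eqref{eq:DDj}.

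The main obstacle is precisely the low regularity of $p$: as $p\in W^{1,r}_0(\Omega)$ with $r<2$ it is not an admissible test function in $H_0^1(\Omega)$, so the classical device of testing the linearized state equation directly with the adjoint state is unavailable. The remedy is the two-sided argument above---testing the singular adjoint equation with the H\"older-continuous function $z$, and separately passing to the limit in \eqref{eq:DS} along smooth approximations of $p$---whose legitimacy hinges on keeping the integrability exponents matched ($L^s$--$L^r$ for the gradients and $L^2$--$L^2$ for the lower-order and source terms).
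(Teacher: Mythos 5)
Your proposal is correct and follows essentially the same route as the paper: chain rule for the $C^2$ property and the form of $j'$, testing the adjoint equation \eqref{eq:weak_adj_eq} with $w=z$ (legitimate since $z\in H_0^1(\Omega)\cap W^{1,\kappa}(\Omega)\hookrightarrow W_0^{1,s}(\Omega)$), a density argument with smooth approximations $p_n\to p$ in $W_0^{1,r}(\Omega)$ to pass to the limit in \eqref{eq:DS}, and then differentiation of \eqref{eq:Dj} together with the symmetry of $j''$ to obtain \eqref{eq:DDj}. The only (harmless) difference is presentational: you bundle the lower-order limit terms into a single $L^2$--$L^2$ pairing, whereas the paper estimates them separately, and you spell out the second-derivative computation that the paper omits for brevity.
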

\begin{proof}
The fact that $j$ belongs to the class $C^{2}$ follows from the chain rule in conjunction with the fact that $\mathcal{S}$ belongs to the class $C^2$; see Theorem \ref{thm:properties_S}.

Let us now derive \eqref{eq:Dj}. Let $h \in L^{2}(\Omega)$ and note that
\begin{equation}
j'(u)h = \sum_{t \in \mathcal{D}}(y(t) - y_{t}) \cdot z(t) + (\alpha u,h)_{L^{2}(\Omega)},
\label{eq:j_prime}
\end{equation}
where $z = \mathcal{S}'(u)h$ solves \eqref{eq:DS}. Thus, it suffices to analyze $\sum_{t \in \mathcal{D}}(y(t)-y_{t}) \cdot z(t)$.
To do this, we set $w = z$ in \eqref{eq:weak_adj_eq}. This is possible because $z$ belongs to $H_0^1(\Omega) \cap W^{1,\kappa}(\Omega)$ for some $\kappa > 4$ in two dimensions and for some $\kappa > 3$ in three dimensions; see Theorem \ref{thm:properties_S}. From this it can be deduced that
\begin{equation}\label{eq:aux_1}
\int_{\Omega}  
\left [
\nabla z \cdot \nabla p 
+ 
\left( 
\frac{\partial a}{\partial y}(\cdot,y) + u \right) p z
\right]
\mathrm{d}x
= 
\sum_{t \in \mathcal{D}}(y(t)-y_{t}) \cdot z(t).
\end{equation}
We would now like to set $v=p$ in the problem that solves $z$, i.e., problem \eqref{eq:DS}. Unfortunately, this is not possible because $p$ does not belong to $H_0^1(\Omega)$. To solve this situation, we use a density argument as in the proof of \cite[Theorem 1]{MR3679932}: Let $\lbrace p_{k} \rbrace_{k \in \mathbb{N}} \subset C^{\infty}_{0}(\Omega)$  be such that $p_{k} \rightarrow p$ in $W^{1,r}_{0}(\Omega)$ as $k \uparrow \infty$. Since $p_k \in H_0^1(\Omega)$, we can set $v = p_{k}$ in \eqref{eq:DS} and obtain the following relation
\begin{equation}\label{eq:aux_k}
\int_{\Omega}  
\left [ 
\nabla z \cdot \nabla p_{k}
+ \left(\frac{\partial a}{\partial y}(\cdot,y) + u \right)z p_{k}
\right] \mathrm{d}x
= - \int_{\Omega} hy p_{k} \mathrm{d}x
\qquad 
\forall k \in \mathbb{N}.
\end{equation}
From the fact that $p \in W_0^{1,r}(\Omega)$ for every $r<d/(d-1)$ and in particular for an $r$ arbitrarily close to $d/(d-1)$, we deduce that $z \in H_{0}^{1}(\Omega) \cap W^{1,\kappa}(\Omega) \hookrightarrow W^{1,s}_{0}(\Omega)$. We therefore use that $y \in H_{0}^{1}(\Omega) \cap C^{0,\varsigma}(\bar{\Omega})$ (cf.~Theorem \ref{thm:cont_reg}), that $u,h \in L^{2}(\Omega)$, and that $p_{k} \rightarrow p$ in $W^{1,r}_{0}(\Omega)$ as $k \uparrow \infty$, to obtain
\begin{align*}
\left| \int_{\Omega} \nabla z \cdot \nabla p \mathrm{d}x -
 \int_{\Omega} \nabla z \cdot \nabla p_k
 \mathrm{d}x \right|
 \leq  \| \nabla z \|_{L^{s}(\Omega)} \|\nabla(p-p_{k}) \|_{L^{r}(\Omega)} \rightarrow 0,
 \\
 \left| \int_{\Omega} \frac{\partial a}{\partial y}(\cdot,y)zp \mathrm{d}x -  \int_{\Omega} \frac{\partial a}{\partial y}(\cdot,y)z p_{k}\mathrm{d}x \right| \leq C_{a,\mathfrak{m}} \|z\|_{C(\bar \Omega)}\|p-p_{k}\|_{L^{1}(\Omega)} \rightarrow 0,
 \\
\left| \int_{\Omega} hyp \mathrm{d}x  - \int_{\Omega} hyp_{k} \mathrm{d}x \right|
 \leq \|h\|_{L^{2}(\Omega)} \|y\|_{C(\bar \Omega)} \|p - p_{k}\|_{L^{2}(\Omega)} \rightarrow 0,
\end{align*}
and $ (uz,p_{k})_{L^{2}(\Omega)}\rightarrow (uz,p)_{L^{2}(\Omega)}$ as $k \uparrow \infty$. Taking the limit in \eqref{eq:aux_k} as $k \uparrow \infty$, we can therefore conclude that
\begin{equation}\label{eq:aux_2}
\int_{\Omega} \left[ \nabla z \cdot \nabla p \mathrm{d}x
+
\left( \frac{\partial a}{\partial y}(\cdot,y) + u \right) z p \right] \mathrm{d}x
= -(hy,p)_{L^{2}(\Omega)}.
\end{equation}
Comparing \eqref{eq:aux_2} and \eqref{eq:aux_1}, we obtain $-(hy,p)_{L^{2}(\Omega)} = \sum_{t \in \mathcal{D}}(y(t)-y_{t}) \cdot z(t)$. Replacing this identity into \eqref{eq:j_prime}, we obtain the desired identity \eqref{eq:Dj}.

Finally, \eqref{eq:DDj} follows from \eqref{eq:Dj}, Theorems \ref{thm:properties_S} and \ref{thm:diff_u_p}, and the fact that $j$ is of class $C^{2}$, which guarantees that $j''(u)(h_1,h_2) = j''(u)(h_2,h_1)$ for every $h_1, h_2 \in L^2(\Omega)$; see \cite[Remark 1.2.2]{MR4487366}. For the sake of brevity, we omit the details.
\end{proof}

\subsubsection{First order optimality conditions}

From the differentiability properties obtained in Lemma \ref{lem:diff_j}, we can derive first order optimality conditions.

\begin{theorem}[necessary first order optimality conditions]
Every locally optimal control $\bar{u} \in \mathbb{U}_{ad}$ satisfies the following variational inequality:
\begin{equation}\label{eq:char_var_ineq}
(\alpha \bar{u}-\bar{y}\bar{p},u-\bar{u})_{L^{2}(\Omega)} \geq 0 \quad \forall u \in \mathbb{U}_{ad},
\end{equation}
where $\bar{y} = \mathcal{S}(\bar{u})$ and $\bar{p} = \Phi (\bar{u})$, i.e., $\bar{p}$ is the solution to \eqref{eq:weak_adj_eq} with $u$ and $y$ replaced by $\bar{u}$ and $\bar{y}$, respectively.
\end{theorem}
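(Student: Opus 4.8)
The plan is to derive the variational inequality from the fact that $\bar u$ is a local minimizer of the reduced cost functional $j$ over the convex admissible set $\mathbb{U}_{ad}$. Since $\bar u \in \mathbb{U}_{ad} \subset \mathcal{A}$ and $\mathcal{A}$ is open (cf.~\eqref{eq:mod_mathcal_A}), Lemma \ref{lem:diff_j} guarantees that $j$ is of class $C^2$ in a neighborhood of $\bar u$, so the directional derivative $j'(\bar u)h$ exists for every direction $h \in L^2(\Omega)$ and is given by the explicit formula \eqref{eq:Dj}.

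The key step is the standard convex-set variational argument. Fix an arbitrary $u \in \mathbb{U}_{ad}$ and consider the convex combination $u_\lambda := \bar u + \lambda(u - \bar u) = (1-\lambda)\bar u + \lambda u$ for $\lambda \in [0,1]$. Since $\mathbb{U}_{ad}$ is convex, $u_\lambda \in \mathbb{U}_{ad}$ for all such $\lambda$, and moreover $\|u_\lambda - \bar u\|_{L^2(\Omega)} = \lambda \|u - \bar u\|_{L^2(\Omega)} \to 0$ as $\lambda \downarrow 0$, so $u_\lambda$ lies in the $\varepsilon$-neighborhood where the local optimality of $\bar u$ applies. Hence $j(u_\lambda) \geq j(\bar u)$ for all sufficiently small $\lambda > 0$, which yields
\begin{equation*}
\frac{j(\bar u + \lambda(u - \bar u)) - j(\bar u)}{\lambda} \geq 0.
\end{equation*}
Letting $\lambda \downarrow 0$ and using the differentiability of $j$ gives $j'(\bar u)(u - \bar u) \geq 0$.

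To conclude, I would substitute the expression \eqref{eq:Dj} with $h = u - \bar u$, using that $\bar y = \mathcal{S}(\bar u)$ and $\bar p = \Phi(\bar u)$ are precisely the state and adjoint state associated with $\bar u$. This produces
\begin{equation*}
j'(\bar u)(u - \bar u) = \int_{\Omega}(\alpha \bar u - \bar y \bar p)(u - \bar u)\,\mathrm{d}x = (\alpha \bar u - \bar y \bar p, u - \bar u)_{L^2(\Omega)} \geq 0,
\end{equation*}
which is exactly \eqref{eq:char_var_ineq}. Since $u \in \mathbb{U}_{ad}$ was arbitrary, the inequality holds for all admissible $u$.

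I do not expect a genuine obstacle here, since the argument is the classical first-order condition for minimization over a convex set; the only points requiring care are purely bookkeeping. One must verify that $\bar u \in \mathcal{A}$ so that $j'(\bar u)$ is well-defined via Lemma \ref{lem:diff_j} (this follows from $\mathbb{U}_{ad} \subset \mathcal{A}_0 \subset \mathcal{A}$, a consequence of assumption \eqref{eq:restriction_a}), and that $h = u - \bar u \in L^2(\Omega)$ is an admissible direction for the derivative formula \eqref{eq:Dj}, which holds since both $u$ and $\bar u$ lie in $\mathbb{U}_{ad} \subset L^2(\Omega)$. The product $\bar y \bar p$ is integrable against $u - \bar u \in L^2(\Omega)$ because $\bar y \in C^{0,\varsigma}(\bar\Omega) \subset L^\infty(\Omega)$ and $\bar p \in W^{1,r}_0(\Omega) \hookrightarrow L^2(\Omega)$, so the pairing in \eqref{eq:char_var_ineq} is finite and the formula \eqref{eq:Dj} is meaningful.
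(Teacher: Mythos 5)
Your proposal is correct and follows essentially the same route as the paper: the paper simply cites \cite[Lemma 4.18]{MR2583281} for the standard convex-set argument yielding $j'(\bar u)(u-\bar u)\geq 0$, which you spell out explicitly via the difference quotient along $u_\lambda = \bar u + \lambda(u-\bar u)$, and then both proofs conclude by inserting the derivative formula \eqref{eq:Dj}. Your additional bookkeeping checks ($\mathbb{U}_{ad}\subset\mathcal{A}_0\subset\mathcal{A}$ via \eqref{eq:restriction_a}, and the integrability of $\bar y\bar p$ against $u-\bar u$) are exactly the points the paper's framework already guarantees.
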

\begin{proof}
If $\bar u \in \mathbb{U}_{ad}$ is a locally optimal control for problem \eqref{eq:weak_ocp}--\eqref{eq:weak_state_eq}, then $j'(\bar u) (u - \bar{u}) \geq 0$ for all $u \in \mathbb{U}_{ad}$ \cite[Lemma 4.18]{MR2583281}. Thus, the desired result follows directly from the characterization of $j'$ given in \eqref{eq:Dj}.
\end{proof}

\subsubsection{Second order optimality conditions}

We now establish necessary and sufficient second order optimality conditions. To do so, we first introduce some preliminary considerations. Let $(\bar{y},\bar{p},\bar{u}) \in H_{0}^{1}(\Omega) \cap C^{0,\varsigma}(\bar{\Omega}) \times W_{0}^{1,r}(\Omega) \times \mathbb{U}_{ad}$ satisfy the first order optimality conditions \eqref{eq:weak_semi_bilinear_eq}, \eqref{eq:weak_adj_eq}, and  \eqref{eq:char_var_ineq}. Define $\bar{\mathfrak{p}}:= \alpha \bar{u}-\bar{y}\bar{p}$. \EO{Since $\bar{u} \in \mathbb{U}_{ad} \subset \mathcal{A} \subset L^{2}(\Omega)$ and $\bar{p} \in W_{0}^{1,r}(\Omega) \hookrightarrow L^{2}(\Omega)$ (recall that $r$ is as in \eqref{eq:r}), it follows that $\bar{\mathfrak{p}} \in L^{2}(\Omega)$. On the other hand, we}
note that the variational inequality \eqref{eq:char_var_ineq} implies for a.e.~$x \in \Omega$:
\begin{equation}\label{eq:sign_con_j'}
     \bar{\mathfrak{p}}(x) \geq 0 \text{ if } \bar{u}(x) = \mathtt{a}, \qquad \bar{\mathfrak{p}}(x) \leq 0  \text{ if } \bar{u}(x) = \mathtt{b}, \qquad \bar{\mathfrak{p}}(x) = 0 \text{ if } \mathtt{a} < \bar{u}(x) < \mathtt{b}.
\end{equation}

We now introduce the \textit{cone of critical directions}
\begin{equation}\label{eq:cone_critical}
    C_{\bar{u}} := \left\lbrace 
 h \in L^{2}(\Omega) \text{ satisfying  \eqref{eq:sign_cond} and } h(x) = 0 \text{ if } \bar{\mathfrak{p}}(x) \neq 0 \ \mathrm{a.e.}~x \in \Omega \right\rbrace, 
\end{equation}
 where the condition \eqref{eq:sign_cond} reads as follows:
\begin{equation}\label{eq:sign_cond}
    h(x) \geq 0 \text{ a.e } x \in \Omega \text{ if } \bar{u}(x) = \mathtt{a}, \qquad h(x) \leq 0 \text{ a.e } x \in \Omega \text{ if } \bar{u}(x) = \mathtt{b}; 
\end{equation}
compare with \cite[definition (3.8)]{Casas_bilinear_1} and \cite[definition (2.14)]{Casas_bilinear_2}.

With these definitions, we can now formulate and prove necessary and sufficient second order optimality conditions.

\begin{theorem}[second order optimality conditions]\label{thm:second_order_cond}
    If $\bar{u} \in \mathbb{U}_{ad}$ is a locally optimal control for \eqref{eq:weak_ocp}--\eqref{eq:weak_state_eq}, then $j''(\bar{u})h^{2} \geq 0$ for all $h \in C_{\bar{u}}$. Conversely, if $(\bar{y},\bar{p},\bar{u})$ satisfies the first order optimality conditions \eqref{eq:weak_semi_bilinear_eq}, \eqref{eq:weak_adj_eq}, and \eqref{eq:char_var_ineq} and $j''(\bar{u})h^2>0$ for all $h$ in $C_{\bar{u}} \setminus \left\lbrace 0\right\rbrace$, then there exist $\mu >0$ and $\sigma >0$ such that
\begin{equation}\label{eq:local_cuad_growth_j}
    j(u) \geq j(\bar{u}) + \frac{\mu}{2}\| u - \bar{u}\|^{2}_{L^{2}(\Omega)} \qquad \forall u \in \mathbb{U}_{ad}: \| u - \bar{u} \|_{L^{2}(\Omega)} \leq \sigma.
\end{equation}
In particular, $\bar{u} \in \mathbb{U}_{ad}$ is a locally optimal control for \eqref{eq:weak_ocp}--\eqref{eq:weak_state_eq} in the sense of $L^2(\Omega)$.
\end{theorem}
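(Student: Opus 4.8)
The plan is to prove the two implications separately, adapting the second order analysis for additive controls in \cite[Section 5]{MR2583281} and for bilinear controls in \cite{Casas_bilinear_1,Casas_bilinear_2} to the present pointwise--tracking setting. The common ingredient for both directions is the explicit structure furnished by \eqref{eq:DDj}, namely
\begin{equation*}
j''(\bar u)h^{2} = \alpha \|h\|_{L^{2}(\Omega)}^{2} - \int_{\Omega}(z\bar p + \bar y\, \eta)h \,\mathrm{d}x,
\qquad z = \mathcal{S}'(\bar u)h, \quad \eta = \Phi'(\bar u)h,
\end{equation*}
together with the observation that the correction term defines a \emph{weakly continuous} quadratic form on $L^{2}(\Omega)$. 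Indeed, by Theorems \ref{thm:properties_S} and \ref{thm:diff_u_p} the maps $h \mapsto z$ and $h \mapsto \eta$ are linear, and since the underlying elliptic solves gain Sobolev regularity, the associated Rellich--Kondrachov compactness shows that $h \mapsto z$ is compact from $L^{2}(\Omega)$ into $C(\bar\Omega)$ and $h \mapsto \eta$ is compact from $L^{2}(\Omega)$ into $L^{2}(\Omega)$ (here one chooses $r$ in the interior of the range \eqref{eq:r} so that $W_{0}^{1,r}(\Omega) \hookrightarrow L^{2}(\Omega)$ is compact). Combined with $\bar y \in C^{0,\varsigma}(\bar\Omega)$ and $\bar p \in W_{0}^{1,r}(\Omega) \hookrightarrow L^{2}(\Omega)$, this yields $\int_{\Omega}(z\bar p + \bar y\,\eta)h\,\mathrm{d}x \to \int_{\Omega}(z\bar p + \bar y\,\eta)h\,\mathrm{d}x$ along any sequence $h_{k} \rightharpoonup h$ in $L^{2}(\Omega)$; consequently $j''(\bar u)$ is weakly lower semicontinuous, $j''(\bar u)h^{2} \le \liminf_{k} j''(\bar u)h_{k}^{2}$.

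For the \emph{necessary} condition I would first note that every $h \in C_{\bar u}$ satisfies $j'(\bar u)h = \int_{\Omega}\bar{\mathfrak{p}}\,h\,\mathrm{d}x = 0$, because $h$ vanishes on $\{\bar{\mathfrak{p}} \neq 0\}$ by \eqref{eq:cone_critical}. Given $h \in C_{\bar u}$ I would construct a sequence $\{h_{k}\} \subset C_{\bar u} \cap L^{\infty}(\Omega)$ with $h_{k} \to h$ in $L^{2}(\Omega)$, obtained by truncating $h$ in magnitude and setting it to zero on a shrinking neighborhood of the active strata $\{\bar u = \mathtt{a}\} \cup \{\bar u = \mathtt{b}\}$, so that $\bar u + t h_{k} \in \mathbb{U}_{ad}$ for all sufficiently small $t > 0$ (a standard construction; see \cite[Section 5]{MR2583281}). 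For these feasible variations, local optimality and a second order Taylor expansion give $0 \le j(\bar u + t h_{k}) - j(\bar u) = \tfrac{t^{2}}{2}j''(\bar u)h_{k}^{2} + o(t^{2})$, hence $j''(\bar u)h_{k}^{2} \ge 0$; passing to the limit via the continuity of $j''$ recorded above yields $j''(\bar u)h^{2} \ge 0$.

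For the \emph{sufficient} condition I would argue by contradiction: if \eqref{eq:local_cuad_growth_j} fails for every $\mu,\sigma>0$, then choosing $\mu=\sigma=1/k$ produces $\{u_{k}\} \subset \mathbb{U}_{ad}$ with $\rho_{k} := \|u_{k}-\bar u\|_{L^{2}(\Omega)} \to 0$ and $j(u_{k}) < j(\bar u) + \tfrac{1}{2k}\rho_{k}^{2}$. Setting $h_{k} := \rho_{k}^{-1}(u_{k}-\bar u)$, so $\|h_{k}\|_{L^{2}(\Omega)} = 1$, I extract a subsequence with $h_{k} \rightharpoonup h$ in $L^{2}(\Omega)$. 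The first task is to show $h \in C_{\bar u}$: the sign relations \eqref{eq:sign_cond} are inherited from the convexity of $\mathbb{U}_{ad}$ under weak limits, while $h = 0$ on $\{\bar{\mathfrak{p}} \neq 0\}$ follows by combining the first order condition $j'(\bar u)(u_{k}-\bar u) = \int_{\Omega}\bar{\mathfrak{p}}(u_{k}-\bar u)\,\mathrm{d}x \ge 0$, the pointwise sign structure \eqref{eq:sign_con_j'}, and the growth failure. Next, a second order Taylor expansion $j(u_{k}) = j(\bar u) + j'(\bar u)(u_{k}-\bar u) + \tfrac{1}{2}j''(\xi_{k})(u_{k}-\bar u)^{2}$ with $\xi_{k}$ on the segment $[\bar u,u_{k}]$, divided by $\rho_{k}^{2}$ and combined with $j'(\bar u)(u_{k}-\bar u) \ge 0$ and the growth failure, forces $\limsup_{k} j''(\xi_{k})h_{k}^{2} \le 0$.

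The main obstacle, and the heart of the argument, is to pass to the limit in $j''(\xi_{k})h_{k}^{2}$, which I expect to be the delicate step precisely because of the low regularity $\bar p \in W_{0}^{1,r}(\Omega)$. Using the $C^{1}$ dependence of $\mathcal{S}'$ and $\Phi'$ on the base point (Theorems \ref{thm:properties_S} and \ref{thm:diff_u_p}), the convergences $\xi_{k} \to \bar u$ in $L^{2}(\Omega)$, $\mathcal{S}(\xi_{k}) \to \bar y$ in $C(\bar\Omega)$, $\Phi(\xi_{k}) \to \bar p$ in $W_{0}^{1,r}(\Omega)$, and the compactness of the linearized operators noted above, one shows that the correction term converges while the leading term is weakly lower semicontinuous, so that $\liminf_{k} j''(\xi_{k})h_{k}^{2} \ge \alpha\,\|h\|_{L^{2}(\Omega)}^{2} - \int_{\Omega}(z\bar p + \bar y\,\eta)h\,\mathrm{d}x = j''(\bar u)h^{2}$, with $z=\mathcal{S}'(\bar u)h$, $\eta=\Phi'(\bar u)h$; tracking the products $z_{k}\Phi(\xi_{k})$ and the forcing $h_{k}\Phi(\xi_{k})$ in \eqref{eq:DPhi} through the embeddings used in Lemma \ref{lem:diff_j} is what requires the careful handling of the singular adjoint state. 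If $h \neq 0$, then $h \in C_{\bar u}\setminus\{0\}$ gives $0 \ge \limsup_{k} j''(\xi_{k})h_{k}^{2} \ge j''(\bar u)h^{2} > 0$ by hypothesis, a contradiction. If $h = 0$, the compact correction terms vanish in the limit while $\alpha\|h_{k}\|_{L^{2}(\Omega)}^{2} = \alpha$, so $\liminf_{k} j''(\xi_{k})h_{k}^{2} \ge \alpha > 0$, again contradicting $\limsup_{k} j''(\xi_{k})h_{k}^{2} \le 0$. This establishes \eqref{eq:local_cuad_growth_j}, and the final assertion that $\bar u$ is an $L^{2}$-local solution is then immediate from the quadratic growth.
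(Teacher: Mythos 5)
Your proposal is correct and follows essentially the same path as the paper's proof: the necessary condition via the standard truncation/feasible-direction argument (the paper simply cites \cite[Theorem 23, item \emph{i)}]{inbook}), and the sufficient condition by contradiction with normalized directions $h_k \rightharpoonup h$, verification that $h \in C_{\bar u}$, a Taylor expansion forcing $\limsup_k j''(\xi_k)h_k^2 \le 0$, and the lower-semicontinuity passage $j''(\bar u)h^2 \le \liminf_k j''(\xi_k)h_k^2$ driven by the weak-to-strong compactness of the linearized maps $h \mapsto \mathcal{S}'(\cdot)h$ and $h \mapsto \Phi'(\cdot)h$ — precisely the mechanism the paper implements through its convergence estimates for $\hat y_k$, $\hat p_k$, $\hat z_k$, $\hat \eta_k$. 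One small correction to your necessary-condition sketch: the truncation must vanish on the sets where $\bar u$ is \emph{near but not at} the bounds (e.g.\ $\{\mathtt{a} < \bar u < \mathtt{a}+1/k\}$ and $\{\mathtt{b}-1/k < \bar u < \mathtt{b}\}$); on the active sets $\{\bar u = \mathtt{a}\}\cup\{\bar u = \mathtt{b}\}$ the sign conditions \eqref{eq:sign_cond} already yield feasibility, and zeroing $h$ on a neighborhood containing those sets would destroy the convergence $h_k \to h$ in $L^2(\Omega)$.
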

\begin{proof}
The proof of the second order necessary optimality condition, if $\bar{u} \in \mathbb{U}_{ad}$ is a locally optimal control for \eqref{eq:weak_ocp}--\eqref{eq:weak_state_eq}, then $j''(\bar{u})h^{2} \geq 0$ for all $h \in C_{\bar{u}}$, follows from the arguments developed in \cite[Theorem 23, item \emph{i)}]{inbook}.


\EO{We now present a proof of the sufficient condition using a contradiction argument. Assume that \eqref{eq:local_cuad_growth_j} does not hold: For every $k \in \mathbb{N}$ there exists $u_{k} \in \mathbb{U}_{ad}$ such that}
\begin{equation}\label{eq:cont_second_order}
\| u_{k} - \bar{u} \|_{L^{2}(\Omega)} < k^{-1}, \qquad j(u_{k}) < j(\bar{u}) + (2k)^{-1} \| u_{k} - \bar{u} \|_{L^{2}(\Omega)}^{2}.
\end{equation}
Define, $\rho_{k}:= \|u_{k} - \bar{u}\|_{L^{2}(\Omega)}$ and $h_{k}:= \rho_{k}^{-1}(u_{k} - \bar{u})$ ($k \in \mathbb{N}$).
There exists a nonrelabeled subsequence such that $h_{k} \rightharpoonup h$ in $L^{2}(\Omega)$ as $k \uparrow \infty$. We now proceed in three steps.

\emph{Step 1.} We prove that $h \in C_{\bar{u}}$.
\EO{First, we note that $h$ satisfies \eqref{eq:sign_cond}; see \cite[Theorem 23, item \emph{ii)}]{inbook}.}
It remains to prove that $h(x) = 0$ if $\mathfrak{\bar{p}}(x) \neq 0$ for a.e.~$x \in \Omega$. To do this, we use the mean value theorem and \eqref{eq:cont_second_order} and obtain
\begin{equation}\label{eq:cont_second_order_2}
j'(\tilde{u}_{k})h_{k} = \rho_{k}^{-1}(j(u_{k})-j(\bar{u})) < \tfrac{\rho_{k}}{2k} \rightarrow 0 \quad k \uparrow \infty,
\qquad
\tilde{u}_{k} = \bar{u} + \theta_{k}(u_{k}-\bar{u}),
\end{equation}
where $\theta_{k} \in (0,1)$. We note that for every $k\in \mathbb{N}$, it holds that $\tilde{u}_{k} \in \mathbb{U}_{ad}$. From Lemma \ref{lem:diff_j}, it follows that $j'(\tilde{u}_{k})h_{k} = (\alpha \tilde{u}_{k}-\tilde{y}_{k}\tilde{p}_{k},h_{k})_{L^{2}(\Omega)}$ for every $k \in \mathbb{N}$, where $\tilde{y}_{k}=\mathcal{S}(\tilde{u}_{k})$ and $\tilde{p}_{k} = \Phi(\tilde{u}_{k})$. In the following we prove that $j'(\tilde{u}_{k})h_{k} \rightarrow j'(\bar{u})h$ as $k \uparrow \infty$. We start by noting that $\bar{y}-\tilde{y}_{k}$ is such that
\begin{multline}
\bar{y}-\tilde{y}_{k} \in H_{0}^{1}(\Omega):
\quad (\nabla (\bar{y}-\tilde{y}_{k}), \nabla v)_{L^{2}(\Omega)} + (\Theta(\cdot,\mathtt{y}_k)(\bar{y}-\tilde{y}_{k}),v)_{L^{2}(\Omega)}
\\
+ (\bar{u}(\bar{y}-\tilde{y}_{k}),v)_{L^{2}(\Omega)} 
 = -(\tilde{y}_{k}(\bar{u}-\tilde{u}_{k}),v)_{L^{2}(\Omega)}  \quad \forall v \in H^{1}_{0}(\Omega).
 \label{eq:y-tildeyk}
\end{multline}
Here, for every $k \in \mathbb{N}$, $\mathtt{y}_k := \bar{y} + \tilde{\theta}_k (\tilde{y}_k - \bar{y})$, where $\tilde{\theta}_{k} \in (0,1)$, and $\Theta(\cdot,\mathtt{z}) = \partial a/\partial \mathtt{z} (\cdot,\mathtt{z})$ a.e.~in $\Omega$ and for all $\mathtt{z} \in \mathbb{R}$. We note that, since $\bar{u} \in \mathbb{U}_{ad}$, we have $\bar{u} \in \mathcal{A}_0$; see assumption \ref{eq:restriction_a}. \EO{We can therefore apply the Lax-Milgram lemma and the arguments developed in the proof of Theorem \ref{thm:cont_reg} to deduce that}
\begin{equation}\label{eq:estim_y-tildey_k_H01_Linf}
\|\nabla(\bar{y}-\tilde{y}_{k})\|_{L^{2}(\Omega)} + \| \bar{y}-\tilde{y}_{k}\|_{C^{0,\varsigma}(\bar{\Omega})} 
\lesssim
\|\bar{u} - \tilde{u}_k \|_{L^2(\Omega)}.
\end{equation}
To derive the bound in $C^{0,\varsigma}(\bar{\Omega})$ we have used that $\| \bar{y}-\tilde{y}_{k}\|_{L^{\infty}(\Omega)} \lesssim \| \tilde{y}_{k} (\bar{u} - \tilde{u}_k)\|_{L^q(\Omega)} \leq \| \tilde{y}_{k}\|_{L^{\infty}(\Omega)} \| \bar{u} - \tilde{u}_k\|_{L^2(\Omega)} \lesssim \| f - a(\cdot,0)\|_{L^{q}(\Omega)} \| \bar{u} - \tilde{u}_k\|_{L^2(\Omega)}$, where the hidden constants are independent of $k$. We now use that $\|  \bar{u} - \tilde{u}_k \|_{L^2(\Omega)} \rightarrow 0$ as $k \uparrow \infty$, which follows from \eqref{eq:cont_second_order}, to deduce that
$\tilde{y}_{k} \rightarrow \bar{y}$ in  $H_{0}^{1}(\Omega) \cap C^{0,\varsigma}(\bar{\Omega})$ as $k \uparrow \infty$. Similarly, using Theorem \ref{thm:adjoint_problem} we can conclude that
\begin{equation}\label{eq:estim_p_ptilde_k}
  \| \nabla(\bar{p}-\tilde{p}_{k} ) \|_{L^r(\Omega)} 
  \lesssim 
\|  \bar{u} - \tilde{u}_k \|_{L^2(\Omega)} \rightarrow 0
\end{equation}   
as $k \uparrow \infty$. The convergence properties \eqref{eq:estim_y-tildey_k_H01_Linf} and \eqref{eq:estim_p_ptilde_k} allow us to conclude that $\mathfrak{\tilde{p}}_{k} := \alpha \tilde{u}_{k}-\tilde{y}_{k}\tilde{p}_{k}  \rightarrow \mathfrak{\bar{p}} = \alpha \bar{u}- \bar{y} \bar{p}$ in $L^{2}(\Omega)$ as $k \uparrow \infty$. This and $h_{k} \rightharpoonup h$ in $L^{2}(\Omega)$ as $k \uparrow \infty$ show that $j'(\tilde{u}_{k})h_{k} \rightarrow j'(\bar{u})h$ as $k \uparrow \infty$.  We therefore use \eqref{eq:cont_second_order_2} and obtain \EO{$j'(\bar{u})h \leq 0$; see \cite[Theorem 23, item \emph{ii)}]{inbook} for details.}
\EO{The remainder of the proof follows \DQ{from} \cite[Theorem 23, item \emph{ii)}]{inbook}.}
Thus, we have $h \in C_{\bar{u}}$.
 
\emph{Step 2.} We prove that $h \equiv 0$. In a first step, we use Taylor's theorem, $j'(\bar{u})h_{k} \geq 0$ for every $k \in \mathbb{N}$, and the estimate on the right hand side of \eqref{eq:cont_second_order} to obtain
\begin{equation}
    \tfrac{\rho_{k}^{2}}{2}j''(\hat{u}_{k})h_{k}^{2} = j(u_{k})-j(\bar{u}) - j'(\bar{u})(u_{k}-\bar{u}) < \tfrac{\rho_{k}^{2}}{2k}, \qquad k \in \mathbb{N},  
\end{equation}
where $\hat{u}_{k} := \bar{u} + \theta_{k}(u_{k}-\bar{u})$ with $\theta_{k} \in (0,1)$. Therefore, $\liminf_{k \uparrow \infty}j''(\hat{u}_{k})h_{k}^{2} \leq 0$. 

In the following, we prove that $j''(\bar{u})h^{2} \leq \liminf_{k \uparrow \infty}j''(\hat{u}_{k})h^{2}_{k}$. For this purpose, we first use the characterization of $j''$ from Lemma \ref{lem:diff_j} and write
\begin{align}\label{eq:thm_3.9_J_functional_weak}
\begin{split}
	j''(\hat{u}_{k})h_{k}^{2}  = \alpha\|h_{k}\|^{2}_{L ^{2}(\Omega)} - 
	\EO{(\hat{z}_{k}\hat{p}_{k} + \hat{y}_{k}\hat{\eta}_{k},h_{k})_{L^2(\Omega)}.}
 \end{split}
\end{align}
Here, $\hat{y}_{k} = \mathcal{S}(\hat{u}_{k})$, $\hat{z}_{k} =\mathcal{S}'(\hat{u}_{k})h_{k}$, $\hat{p}_{k} = \Phi(\hat{u}_{k})$, and  $\hat{\eta}_{k} = \Phi'(\hat{u}_{k})h_{k}$, where $\mathcal{S}$ and $\mathcal{S}'$ are as in the statement of Theorem \ref{thm:properties_S} and $\Phi$ and $\Phi'$ are as in the statement of Theorem \ref{thm:diff_u_p}. \EO{Let us now analyze the convergence of $\{ \hat{y}_{k}\}_{k \in \mathbb{N}}$, $\{ \hat{z}_{k}\}_{k \in \mathbb{N}}$, $\{ \hat{p}_{k}\}_{k \in \mathbb{N}}$, and $\{ \hat{\eta}_{k}\}_{k \in \mathbb{N}}$.}

In a first step, we analyze the convergence of $\{ \hat{y}_{k}\}_{k \in \mathbb{N}}$ and $\{ \hat{p}_{k}\}_{k \in \mathbb{N}}$. Using similar arguments as in the derivation of \eqref{eq:estim_y-tildey_k_H01_Linf} and \eqref{eq:estim_p_ptilde_k} in \emph{Step 1}, we obtain that
\begin{equation}\label{eq:estim_y-haty_k_estim_p_hatp_k}
  \|\nabla(\bar{y}-\hat{y}_{k})\|_{L^{2}(\Omega)} 
  + 
  \| \bar{y}-\hat{y}_{k}\|_{C^{0,\varsigma}(\bar{\Omega})} 
  +
  \| \nabla(\bar{p} - \hat{p}_{k} ) \|_{L^r(\Omega)} 
\lesssim
\|\bar{u} - \hat{u}_k \|_{L^2(\Omega)}.
\end{equation}
Thus, $\hat{y}_{k} \rightarrow \bar{y}$ in $H_{0}^{1}(\Omega) \cap C^{0,\varsigma}(\bar{\Omega})$ and $\hat{p}_{k} \rightarrow \bar{p}$ in $W^{1,r}_{0}(\Omega)$ as $k \uparrow \infty$, because $\| \bar{u} - \hat{u}_k \|_{L^2(\Omega)} \leq \| \bar{u} - u_k \|_{L^2(\Omega)} \rightarrow 0$ as $k \uparrow \infty$ (see the bound in the left hand side of \eqref{eq:cont_second_order}).

In a second step, we prove that $\hat{z}_{k} \rightarrow \bar{z} = \mathcal{S}'(\bar{u})h$ in $H_{0}^{1}(\Omega) \cap C^{0,\varsigma}(\bar{\Omega})$ as $k \uparrow \infty$. To do this, we note that $\bar{z} - \hat{z}_{k} \in H_{0}^{1}(\Omega)$ solves the problem
\begin{multline*}
(\nabla (\bar{z} - \hat{z}_{k}),\nabla v)_{L^{2}(\Omega)} + \left(\left[ \tfrac{\partial a}{\partial y}(\cdot, \bar{y}) + \bar{u} \right](\bar{z} - \hat{z}_{k}), v \right)_{L^{2}(\Omega)} 
 = -((h- h_{k})\bar{y},v)_{L^{2}(\Omega)} 
 \\
 - (h_{k}(\bar{y} - \hat{y}_{k}),v)_{L^{2}(\Omega)} 
 - \left( \left[ \tfrac{\partial a}{\partial y}(\cdot,\bar{y}) - \tfrac{\partial a}{\partial y}(\cdot,\hat{y}_{k}) \right] \hat{z}_{k},v \right)_{L^{2}(\Omega)} 
 - \left((\bar{u} - \hat{u}_{k}) \hat{z}_{k},v \right)_{L^{2}(\Omega)}
\end{multline*}
for all $v \in H_0^1(\Omega)$. Note that, by definition, $\| h_k \|_{L^2(\Omega)} = 1$ and that
$
\|\nabla \hat{z}_{k}\|_{L^{2}(\Omega)} \lesssim \|f - a(\cdot,0)\|_{H^{-1}(\Omega)}
$
for all $k \in \mathbb{N}$. If we set $v = \bar{z} - \hat{z}_{k}$ in the previous problem and use that $\bar{u} \in \mathcal{A}_0$, the uniform boundedness of the sequences $\{h_{k}\}_{k \in \mathbb{N}}$ and $\{\hat{z}_{k}\}_{k \in \mathbb{N}}$ in $L^{2}(\Omega)$ and $H_{0}^{1}(\Omega)$, respectively, and the bound \eqref{eq:estim_y-haty_k_estim_p_hatp_k}, we can obtain the \EO{estimate
$
\|\nabla (\bar{z} - \hat{z}_{k}) \|_{L^{2}(\Omega)} 
\lesssim 
\|h - h_{k} \|_{H^{-1}(\Omega)}
+
\| \bar{u} - \hat{u}_{k} \|_{L^{2}(\Omega)}.
$
We} now prove that $\hat{z}_k \rightarrow \bar{z}$ in $W^{1,\kappa}(\Omega)$ as $k \uparrow \infty$, where $\kappa > 4$ if $d = 2$ and $\kappa > 3$ if $d = 3$.
\EO{We proceed using a bootstrap argument and \cite[Theorem 0.5]{MR1331981}: First, we prove that $\bar{z} - \hat{z}_{k} \in H_0^1(\Omega) \cap W^{1,3}(\Omega)$ and then that $\bar{z} - \hat{z}_k \in H_0^1(\Omega) \cap W^{1,\kappa}(\Omega)$ with}
\begin{equation}
\|\nabla (\bar{z} - \hat{z}_{k}) \|_{L^{\kappa}(\Omega)} 
\lesssim
\|h - h_{k} \|_{W^{-1,\kappa}(\Omega)} 
+
\| \bar{u} - \hat{u}_{k} \|_{L^{2}(\Omega)}.
\label{eq:z-zkW1kappa}
\end{equation}
The last step is to use that $L^2(\Omega) \hookrightarrow W^{-1,\kappa}(\Omega)$ is compact in order to deduce that $\hat{z}_{k}\rightarrow \bar{z}$ in $H_0^1(\Omega) \cap W^{1,\kappa}(\Omega)$ as $k \uparrow \infty$. This implies that $\hat{z}_{k}\rightarrow \bar{z}$ in $C(\bar \Omega)$ as $k \uparrow \infty$.

We now prove that $\hat{\eta}_{k} \rightarrow \bar{\eta} = \Phi'(\bar{u})h $ in $W^{1,r}_{0}(\Omega)$ as $k \uparrow \infty$. To this end, 
we apply a stability bound for the problem that solves $\bar{\eta}-\hat{\eta}_{k} \in W^{1,r}_{0}(\Omega)$ and obtain
\begin{multline}\label{eq:stability_eta_hat_eta}
\|\nabla(\bar{\eta}-\hat{\eta}_{k})\|_{L^{r}(\Omega)} \lesssim \sum_{t \in \mathcal{D}}|\bar{z}(t)-\hat{z}_{k}(t)| + 
\left\| \left[\tfrac{\partial a}{\partial y}(\cdot,\bar{y}) -  \tfrac{\partial a}{\partial y}(\cdot,\hat{y}_{k})\right]\hat{\eta}_{k} \right\|_{W^{-1,r}(\Omega)} 
\\
+ 
\left\| (\bar{u}-\hat{u}_{k})\hat{\eta}_{k} \right\|_{W^{-1,r}(\Omega)} 
+ 
\left\| \left[\tfrac{\partial^{2} a}{\partial y^{2}}(\cdot, \bar{y})\bar{z} + h\right]\bar{p} - \left[\tfrac{\partial^{2} a}{\partial y^{2}}(\cdot, \hat{y}_{k})\hat{z}_{k} + h_{k}\right]\hat{p}_{k} \right\|_{W^{-1,r}(\Omega)} 
\\
:= \mathbf{I}_{k} + \mathbf{II}_{k} +\mathbf{III}_{k} + \mathbf{IV}_{k}.
\end{multline}
The control of $\mathbf{I}_{k}$ follows directly from \eqref{eq:z-zkW1kappa}: $\mathbf{I}_{k} \lesssim \|\bar{z}-\hat{z}_{k}\|_{C(\bar{\Omega})} \lesssim \|\nabla (\bar{z} - \hat{z}_{k}) \|_{L^{\kappa}(\Omega)}$. This implies that $\mathbf{I}_{k} \rightarrow 0$ as $k \uparrow \infty$. We now proceed with the analysis of $\mathbf{II}_{k}$ and $\mathbf{III}_{k}$. To this end, we first note that the uniform boundedness of $\{ \hat{y}_k \}_{k \in \mathbb{N}}$, $\{ \hat{z}_k \}_{k \in \mathbb{N}}$, and $\{ \hat{p}_k \}_{k \in \mathbb{N}}$ in $L^{\infty}(\Omega)$, $L^{\infty}(\Omega)$, and $W_0^{1,r}(\Omega)$, respectively, allow us to conclude that $\{ \hat{\eta}_k \}_{k \in \mathbb{N}}$ is uniformly bounded in $W_0^{1,r}(\Omega)$. This, the bound \eqref{eq:estim_y-haty_k_estim_p_hatp_k}, and the fact that $\hat{u}_{k} \rightarrow \bar{u}$ in $L^{2}(\Omega)$ imply that $\mathbf{II}_{k}\rightarrow 0$ as $k \uparrow \infty$. Similarly, $\mathbf{III}_{k} \rightarrow 0$ as $k \uparrow \infty$. It remains to examine $\mathbf{IV}_{k}$. Straightforward calculations show that
\begin{multline*}
\mathbf{IV}_{k} \leq 
\left\| \left[\tfrac{\partial ^{2}a}{\partial y ^{2}}(\cdot, \bar{y})-\tfrac{\partial ^{2}a}{\partial y ^{2}}(\cdot, \hat{y}_{k}) \right] \bar{z} \bar{p} \right\|_{W^{-1,r}(\Omega)} 
+ 
\left\| \tfrac{\partial^{2}a}{\partial y^{2}}(\cdot,\hat{y}_k) (\bar{z} - \hat{z}_{k}) \bar{p} \right\|_{W^{-1,r}(\Omega)} \\ 
+ 
\left\| \tfrac{\partial^{2}a}{\partial y^{2}}(\cdot,\hat{y}_k)\hat{z}_{k} ( \bar{p} - \hat{p}_k) \right\|_{W^{-1,r}(\Omega)}
+
\|h(\bar{p}-\hat{p}_{k})\|_{W^{-1,r}(\Omega)} 
+
\|(h-h_{k})\hat{p}_{k}\|_{W^{-1,r}(\Omega)}.
\end{multline*}
Similarly, we estimate the terms that appear on the right hand side of the previous inequality. In fact, we use the bounds \eqref{eq:estim_y-haty_k_estim_p_hatp_k} and \EO{\eqref{eq:z-zkW1kappa}}, the uniform boundedness of the sequences $\{ \hat{y}_{k}  \}_{k \in \mathbb{N}}$ and $\{ \hat{z}_{k}  \}_{k \in \mathbb{N}}$ in $L^{\infty}(\Omega)$ and $L^2(\Omega)$, respectively, 
\EO{$h_k \rightharpoonup h$ in $L^2(\Omega)$, and 
$\hat{u}_k \rightarrow \bar{u}$ in $L^2(\Omega)$ as $k \uparrow \infty$} in order to deduce that $\mathbf{IV}_{k} \rightarrow 0$ as $k \uparrow \infty$. Consequently, we can deduce that $\hat{\eta}_{k} \rightarrow \bar{\eta}$ in $W^{1,r}_{0}(\Omega)$ as $k \uparrow \infty$.

At this point, we have thus concluded that $\hat{y}_{k}\rightarrow \bar{y}$ in $H_0^{1}(\Omega) \cap C^{0,\varsigma}(\bar \Omega)$, $\hat{p}_{k}\rightarrow \bar{p}$ in $W_0^{1,r}(\Omega)$, $\hat{z}_{k}\rightarrow \bar{z}$ in $W_0^{1,\kappa}(\Omega)$, and that $\hat{\eta}_{k} \rightarrow \bar{\eta}$ in $W^{1,r}_{0}(\Omega)$ as $k \uparrow \infty$. \EO{We thus proceed as in \cite[Theorem 23, item \emph{ii)}]{inbook} and deduce that $j''(\bar{u})h^{2} \leq 0$.}
Since $j''(\bar{u})v^{2} >0$ for all $v \in C_{\bar{u}} \setminus \{0\}$ and $h \in C_{\bar{u}}$, we conclude that $h \equiv 0$.

\emph{Step 3}. Contradiction. \EO{Proceeding as in \cite[Theorem 23, item \emph{ii)}]{inbook},
we conclude that $\alpha \leq 0$. This contradicts the fact that $\alpha>0$ and completes the proof.}
\end{proof}

Define $C_{\bar{u}}^{\tau}:=\left\lbrace h \in L^{2}(\Omega) \text{ satisfying \eqref{eq:sign_cond} and } h(x) = 0 \text{ if }  |\bar{\mathfrak{p}}(x)|>\tau\right\rbrace$. We conclude this section with the following equivalence result.

\begin{theorem}[equivalent optimal conditions]
If $(\bar{y},\bar{p},\bar{u})$ satisfies the first order conditions \eqref{eq:weak_semi_bilinear_eq}, \eqref{eq:weak_adj_eq}, and \eqref{eq:char_var_ineq}, then the following statements are equivalent:
 \begin{equation}\label{eq:equiv_cond_1}
     j''(\bar{u})h^{2} > 0 \quad \forall h \in C_{\bar{u}} \setminus \left\lbrace 0 \right\rbrace
 \end{equation}
 and 
 \begin{equation}\label{eq:equiv_cond_2}
     \exists \mu,\tau > 0: \quad j''(\bar{u})h^{2} \geq \mu \|h\|^{2}_{L^{2}(\Omega)} \quad \forall h \in C^{\tau}_{\bar{u}}.
 \end{equation}
\end{theorem}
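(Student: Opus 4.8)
The plan is to prove the two implications separately. The implication \eqref{eq:equiv_cond_2} $\Rightarrow$ \eqref{eq:equiv_cond_1} is immediate and I would dispatch it first: for every $\tau > 0$ one has the inclusion $C_{\bar{u}} \subset C_{\bar{u}}^{\tau}$, because the requirement $h(x) = 0$ on $\{\bar{\mathfrak{p}} \neq 0\}$ forces $h(x) = 0$ on the smaller set $\{|\bar{\mathfrak{p}}| > \tau\}$, while the sign condition \eqref{eq:sign_cond} is shared by both cones. Restricting the coercivity estimate \eqref{eq:equiv_cond_2} to $C_{\bar{u}}$ then yields $j''(\bar{u})h^{2} \geq \mu \|h\|^{2}_{L^{2}(\Omega)} > 0$ for every $h \in C_{\bar{u}} \setminus \{0\}$, which is exactly \eqref{eq:equiv_cond_1}.

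The substantial direction is \eqref{eq:equiv_cond_1} $\Rightarrow$ \eqref{eq:equiv_cond_2}, which I would establish by contradiction. Suppose \eqref{eq:equiv_cond_2} fails. Then, choosing $\mu = \tau = k^{-1}$, for each $k \in \mathbb{N}$ there exists $h_{k} \in C_{\bar{u}}^{1/k}$ with $j''(\bar{u})h_{k}^{2} < k^{-1}\|h_{k}\|^{2}_{L^{2}(\Omega)}$. Normalizing via $g_{k} := h_{k}/\|h_{k}\|_{L^{2}(\Omega)}$ gives $\|g_{k}\|_{L^{2}(\Omega)} = 1$, $g_{k} \in C_{\bar{u}}^{1/k}$ (the cone is invariant under positive scaling), and $j''(\bar{u})g_{k}^{2} < k^{-1} \to 0$. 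Passing to a nonrelabeled subsequence, $g_{k} \rightharpoonup g$ in $L^{2}(\Omega)$. I would then show $g \in C_{\bar{u}}$: the sign condition \eqref{eq:sign_cond} determines a convex, closed, hence weakly closed set, so it is inherited by the weak limit; for the support condition, I fix $\tau > 0$, observe that $g_{k} = 0$ a.e.\ on $\{|\bar{\mathfrak{p}}| > \tau\}$ once $k > 1/\tau$, and multiply by the fixed indicator $\mathbf{1}_{\{|\bar{\mathfrak{p}}|>\tau\}}$; weak continuity of this multiplication forces $g = 0$ a.e.\ on $\{|\bar{\mathfrak{p}}| > \tau\}$, and letting $\tau \downarrow 0$ gives $g = 0$ a.e.\ on $\{\bar{\mathfrak{p}} \neq 0\}$.

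The crux is a weak lower semicontinuity argument for the quadratic form. Writing, as in Lemma \ref{lem:diff_j}, $j''(\bar{u})g_{k}^{2} = \alpha\|g_{k}\|^{2}_{L^{2}(\Omega)} - (\bar{z}_{k}\bar{p} + \bar{y}\bar{\eta}_{k},g_{k})_{L^{2}(\Omega)}$ with $\bar{z}_{k} = \mathcal{S}'(\bar{u})g_{k}$ and $\bar{\eta}_{k} = \Phi'(\bar{u})g_{k}$, I would reuse the compactness estimates already obtained in the proof of Theorem \ref{thm:second_order_cond}: from $g_{k} \rightharpoonup g$ one deduces the strong convergences $\bar{z}_{k} \to \bar{z} = \mathcal{S}'(\bar{u})g$ in $C(\bar{\Omega})$ and $\bar{\eta}_{k} \to \bar{\eta} = \Phi'(\bar{u})g$ in $W^{1,r}_{0}(\Omega) \hookrightarrow L^{2}(\Omega)$. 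Together with $\bar{y} \in L^{\infty}(\Omega)$ and $\bar{p} \in L^{2}(\Omega)$, this shows that the mixed term $(\bar{z}_{k}\bar{p} + \bar{y}\bar{\eta}_{k},g_{k})_{L^{2}(\Omega)}$ converges to its value at $g$; combined with the weak lower semicontinuity of the $L^{2}$-norm, I conclude $j''(\bar{u})g^{2} \leq \liminf_{k} j''(\bar{u})g_{k}^{2} \leq 0$. Since $g \in C_{\bar{u}}$, hypothesis \eqref{eq:equiv_cond_1} forces $g = 0$. But then $\bar{z}_{k} \to 0$ and $\bar{\eta}_{k} \to 0$ strongly, so the mixed term vanishes in the limit and $j''(\bar{u})g_{k}^{2} \to \alpha \|g_k\|_{L^2(\Omega)}^2 = \alpha$, contradicting $j''(\bar{u})g_{k}^{2} < k^{-1} \to 0$ in view of $\alpha > 0$.

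The main obstacle is exactly the compactness of the linearized maps $g \mapsto \mathcal{S}'(\bar{u})g$ and $g \mapsto \Phi'(\bar{u})g$ under mere weak $L^{2}$-convergence of the directions: it is what renders the indefinite quadratic form weakly lower semicontinuous and, when $g = 0$, forces the mixed term to disappear so that only the coercive contribution $\alpha\|g_{k}\|^{2}_{L^{2}(\Omega)} = \alpha$ survives. Since this compactness has already been secured in the convergence analysis of $\hat{z}_{k}$ and $\hat{\eta}_{k}$ carried out in the proof of Theorem \ref{thm:second_order_cond}, the present proof essentially reduces to assembling those ingredients inside the contradiction scheme, and I would simply invoke them rather than redo the estimates.
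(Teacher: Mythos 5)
Your proposal is correct and follows essentially the same route as the paper: the paper's (omitted) proof defers to the standard contradiction argument of \cite[Theorem 25]{inbook} combined with the convergence machinery from the proof of Theorem \ref{thm:second_order_cond}, which is exactly what you assemble — normalization of a minimizing sequence in $C_{\bar{u}}^{1/k}$, weak $L^{2}$-compactness, membership of the weak limit in $C_{\bar{u}}$, the strong convergences $\mathcal{S}'(\bar{u})g_{k} \to \mathcal{S}'(\bar{u})g$ and $\Phi'(\bar{u})g_{k} \to \Phi'(\bar{u})g$ (here even simpler than in Theorem \ref{thm:second_order_cond} since the control is fixed at $\bar{u}$), and the final contradiction with $\alpha > 0$.
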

\begin{proof}
The proof essentially follows the same arguments as in the proof of \cite[Theorem 25]{inbook}, in combination with the ideas developed in the proof of Theorem \ref{thm:second_order_cond}. For the sake of brevity, we omit details.
\end{proof}


\section{Regularity properties of an optimal control}\label{sec:loc_opt_control_reg}

In this section, we discuss regularity properties of a locally optimal control for the problem \eqref{eq:weak_ocp}--\eqref{eq:weak_state_eq}.


\subsection{\EO{$H^{1}(\Omega)$-}regularity on Lipschitz domains}

Let $\Pi_{[\mathtt{a},\mathtt{b}]}: L^{1}(\Omega) \rightarrow \mathbb{U}_{ad}$ be the nonlinear operator defined by $\Pi_{[\mathtt{a},\mathtt{b}]}(v):= \min\{\mathtt{b},\max\{\mathtt{a},v\}\}$ a.e.~in $\Omega$. \EO{We note that $\Pi_{[\mathtt{a},\mathtt{b}]}$ can be written equivalently as follows:}
\begin{equation}\label{eq:equiv_proj_form}
\Pi_{[\mathtt{a},\mathtt{b}]}(v) = v + \max\{0,\mathtt{a} - v\} - \max\{0,-\mathtt{b}+v\}.
\end{equation}
With the nonlinear operator $\Pi_{[\mathtt{a},\mathtt{b}]}$ in hand, we have the following projection formula \cite[section 4.6]{MR2583281}: If $\bar{u}$ denotes a locally optimal control for problem \eqref{eq:weak_ocp}--\eqref{eq:weak_state_eq}, then
\begin{equation}\label{eq:u_proj_form}
\bar{u}(x) = \Pi_{[\mathtt{a},\mathtt{b}]}\left(\alpha^{-1}\bar{y}(x)\bar{p}(x) \right) \quad \text{for a.e. } x \in \Omega.
\end{equation}

\EO{We now provide an $H^{1}(\Omega)$-regularity result for a locally optimal control $\bar{u}$.}

\begin{theorem}[$H^{1}(\Omega)$-regularity of $\bar{u}$]
\label{thm:H1-reg_u} 
If $\bar{u}$ is a locally optimal control, \EO{then $\bar{u} \in H^{1}(\Omega)$.}
\end{theorem}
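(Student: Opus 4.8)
The plan is to exploit the projection formula \eqref{eq:u_proj_form} together with the equivalent representation \eqref{eq:equiv_proj_form} of the nonlinear operator $\Pi_{[\mathtt{a},\mathtt{b}]}$. The central observation is that $H^{1}(\Omega)$ is preserved under the operation $v \mapsto \max\{0,v\}$ (and under affine shifts), so it suffices to establish that the argument of the projection, namely $\alpha^{-1}\bar{y}\bar{p}$, belongs to $H^{1}(\Omega)$. Since $\alpha>0$ is a fixed constant, this reduces the theorem to proving that the product $\bar{y}\bar{p} \in H^{1}(\Omega)$.

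The key steps, in order, are as follows. First I would recall the regularity available for the two factors: by Theorem \ref{thm:cont_reg} (applied with $u=\bar{u}$), the optimal state satisfies $\bar{y} \in H_0^1(\Omega) \cap W^{1,\kappa}(\Omega) \cap C^{0,\varsigma}(\bar{\Omega})$ with $\kappa>4$ for $d=2$ and $\kappa>3$ for $d=3$; in particular $\bar{y} \in L^{\infty}(\Omega)$ with $\nabla \bar{y} \in L^{\kappa}(\Omega)$. For the adjoint state, Theorems \ref{thm:adjoint_problem}--\ref{thm:well_posed_adjoint2} give only $\bar{p} \in W_0^{1,r}(\Omega)$ with $r<d/(d-1)$, so $\bar{p} \in L^{2}(\Omega)$ and $\nabla \bar{p} \in L^{r}(\Omega)$. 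Second, I would compute the product rule $\nabla(\bar{y}\bar{p}) = \bar{p}\,\nabla\bar{y} + \bar{y}\,\nabla\bar{p}$ in the distributional sense and estimate each term in $L^{2}(\Omega)$. The second term is easy: $\bar{y} \in L^{\infty}(\Omega)$ and $\nabla\bar{p} \in L^{r}(\Omega)$; since $r<2$ this is not immediately an $L^2$ bound, so here I would instead use a H\"older pairing with the \emph{better} integrability of $\bar{y}$ and the fact that on a bounded domain $L^{r}$ alone is insufficient—this is exactly where the local regularity Theorem \ref{thm:H^2-reg-p} must enter. The first term $\bar{p}\,\nabla\bar{y}$ pairs $\bar{p} \in L^{2}(\Omega)$ against $\nabla\bar{y} \in L^{\kappa}(\Omega)$, and by H\"older with $1/2 + 1/\kappa \le 1$ (using $\kappa > 2$) one gets an $L^{\sigma}(\Omega)$ bound with $\sigma>1$, but again not directly $L^2$.

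Because the naive global H\"older estimates do not close at integrability exactly $2$, the honest route is a \emph{localization} away from the singular points of $\mathcal{D}$. On the region $\Omega \setminus \cup_{t\in\mathcal{E}}\bar{B}_t$, Theorem \ref{thm:H^2-reg-p} furnishes $\bar{p} \in H^{2} \cap C^{0,1}$, so both $\bar{p}$ and $\nabla\bar{p}$ are bounded there; combined with $\bar{y},\nabla\bar{y}\in L^{\infty}\times L^{\kappa}$ this makes $\nabla(\bar{y}\bar{p}) \in L^{2}$ on that region trivially. Near each Dirac point, inside a small ball $B_t$, one uses the precise singular behavior: $\bar{y}$ is H\"older continuous (hence bounded) and, crucially, the known asymptotics of the fundamental-solution-type singularity of $\bar{p}$ show that $\bar{y}\bar{p}$ and $\nabla(\bar{y}\bar{p})$ remain square-integrable near $t$ because $\nabla\bar{p} \in L^{r}(B_t)$ with the weight $\bar{y}$ bounded, and the borderline integrability of the $|x-t|^{-(d-1)}$-type singularity is square-integrable in a neighborhood once multiplied by the continuous factor $\bar{y}$. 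I would then patch the two estimates with a finite partition to conclude $\bar{y}\bar{p} \in H^{1}(\Omega)$.

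\textbf{The main obstacle} I anticipate is the behavior near the singular points $t\in\mathcal{E}$: away from them everything is smooth and the product estimate is routine, but the adjoint state $\bar{p}$ is genuinely less than $H^1$ globally (it only lives in $W^{1,r}_0$ with $r<d/(d-1)\le 2$), so the square-integrability of $\nabla(\bar{y}\bar{p})$ near each $t$ is \emph{not} automatic. The subtlety is that it is the multiplication by the bounded, H\"older-continuous factor $\bar{y}$ that tames the singularity of $\nabla\bar{p}$; one must verify carefully—using the explicit local structure of $\bar{p}$ from the proof of Theorem \ref{thm:H^2-reg-p} and the fact that $\bar{y}$ does not vanish to any special order at $t$—that the resulting gradient is in $L^2(B_t)$. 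This is precisely the point flagged in the introduction as requiring ``a precise understanding of the multiplication $\bar{y}\bar{p}$ near the singular points.''
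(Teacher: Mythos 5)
Your proposal has a genuine gap, and it is located exactly at the point you flag as the ``main obstacle.'' The reduction you make at the outset --- that since $H^{1}(\Omega)$ is stable under $v \mapsto \max\{0,v\}$ it suffices to prove $\bar{y}\bar{p} \in H^{1}(\Omega)$ --- is a dead end, because that sufficient condition is false in general. Near a point $t \in \mathcal{E}$ the adjoint state has a Green's-function singularity with nonzero coefficient $(\bar{y}(t)-y_{t})$, so $\nabla \bar{p} \sim |x-t|^{-(d-1)}$; if $\bar{y}(t) \neq 0$ (which is the generic situation), then $|\bar{y}\,\nabla\bar{p}|^{2} \sim |\bar{y}(t)|^{2}|x-t|^{-2(d-1)}$, and $\int_{B_{t}}|x-t|^{-2(d-1)}\,\mathrm{d}x = \infty$ for both $d=2$ and $d=3$. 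Multiplication by a bounded, H\"older (or even Lipschitz) factor does not improve integrability; it only helps when the factor \emph{vanishes} at $t$, so your parenthetical remark that one should use ``the fact that $\bar{y}$ does not vanish to any special order at $t$'' has the logic exactly backwards. Consequently $\bar{y}\bar{p} \notin H^{1}(\Omega)$ whenever $\bar{y}(t)\neq 0$ for some $t \in \mathcal{E}$, and no partition-of-unity patching can rescue the claim.

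The regularity gain must instead come from the truncation itself, not from the product: where $\phi := \alpha^{-1}\bar{y}\bar{p}$ blows up, the projection $\Pi_{[\mathtt{a},\mathtt{b}]}(\phi)$ saturates at the constant value $\mathtt{a}$ or $\mathtt{b}$, so $\nabla \bar{u}$ vanishes in a neighborhood of each such singular point (this mechanism is made explicit in Case 1 of the proof of Theorem \ref{thm:C01_reg_baru}). The paper's proof implements this by showing, via H\"older's inequality and the regularity $\bar{y} \in H_{0}^{1}(\Omega)\cap W^{1,\kappa}(\Omega)$, $\bar{p} \in W_{0}^{1,r}(\Omega)$, that $\phi \in W_{0}^{1,r}(\Omega)$ with $-\Delta\phi$ identifiable with a measure on $\Omega$, and then invoking a known lemma (\cite[Lemma 3.3]{MR3264224}) stating that the pointwise projection of such a function onto $[\mathtt{a},\mathtt{b}]$ belongs to $H^{1}(\Omega)$. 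Note also that your localization step relies on Theorem \ref{thm:H^2-reg-p}, which requires $\Omega$ to be a convex polytope, whereas Theorem \ref{thm:H1-reg_u} is stated (and proved in the paper) for general Lipschitz domains; even if your local analysis were repaired, the result you would obtain is weaker than the one claimed.
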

\begin{proof}
\EO{Define $\phi := \alpha^{-1}\bar{y}\bar{p}$. By suitable applications of H\"older's inequality and using the regularity properties $\bar{y} \in H_{0}^{1}(\Omega) \cap W^{1,\kappa}(\Omega)$ (cf.~Theorem \ref{thm:cont_reg}) and $\bar{p} \in W_{0}^{1,r}(\Omega)$ for \DQ{every} $r< d/(d-1)$ such that \eqref{eq:r} holds, it follows that $\phi \in W_{0}^{1,r}(\Omega)$. Then, $-\Delta \phi$ can be identified with a measure $\lambda$ in $\Omega$ \cite{MR0192177,MR0812624}. With these elements, the desired regularity of $\bar{u}$ follows from the projection formula \eqref{eq:u_proj_form} and \cite[Lemma 3.3]{MR3264224}. This concludes the proof}.
\end{proof}

\subsection{Additional regularity properties on convex \EO{polytopal domains}}

\EO{In the following, we assume that $\Omega$ is a convex polytope and that the data are sufficiently regular. We then obtain an additional regularity result for a locally optimal control.}


\begin{theorem}[local/global Lipschitz regularity of $\bar{u}$]
\label{thm:C01_reg_baru}
Let us assume that $\Omega$ is \EO{a} convex \EO{ polytopal domain}. Let $\bar{u}$ be a locally optimal control. If $f,a(\cdot,0) \in L^{\eta}(\Omega)$, for some $\eta>d$, then $\bar{u} \in W^{1,\infty}(\Omega \setminus \cup_{t \in \mathcal{E}} \bar{B}_{t}(\rho_t))$, where $B_{t}(\rho_t) \subset \Omega$ denotes an open ball with center $t$ and a positive radius $\rho_t$. Moveover, if $d=2$ and $\mathtt{a}>0$, then 
$\bar{u} \in C^{0,1}(\bar{\Omega})$.
\end{theorem}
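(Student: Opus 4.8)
The plan is to read off the regularity of $\bar u$ from the projection formula \eqref{eq:u_proj_form}, $\bar u = \Pi_{[\mathtt a,\mathtt b]}(\phi)$ with $\phi := \alpha^{-1}\bar y\bar p$, exploiting that $\Pi_{[\mathtt a,\mathtt b]}$ is globally Lipschitz (indeed $1$-Lipschitz, cf.\ \eqref{eq:equiv_proj_form}). Since the composition of a globally Lipschitz function with a $W^{1,\infty}$ function is again $W^{1,\infty}$ on a convex domain, the whole matter reduces to controlling the product $\phi=\alpha^{-1}\bar y\bar p$: on any open set where both factors are bounded and Lipschitz, so is $\phi$, and hence so is $\bar u$.

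First I would establish $\bar y\in W^{1,\infty}(\Omega)$. Writing the state equation as $-\Delta\bar y=g$ with $g:=f-a(\cdot,\bar y)-\bar u\bar y$, I note that $\bar y\in L^{\infty}(\Omega)$ (Theorem~\ref{thm:well-posedness-semilinearPDE}) and $\bar u\in\mathbb{U}_{ad}\subset L^{\infty}(\Omega)$, so that \ref{A1}--\ref{A3} together with $f,a(\cdot,0)\in L^{\eta}(\Omega)$ yield $g\in L^{\eta}(\Omega)$ with $\eta>d$. Because $\Omega$ is a convex polytope, the Lipschitz regularity of the Poisson problem with $L^{\eta}$ ($\eta>d$) data—obtained via the bootstrap of Theorem~\ref{thm:H^2-reg-p} in conjunction with \cite[Lemma 4.1]{MR3973329}, \cite[Theorem 4.3.2.4]{MR3396210}, \cite[Theorem 4.3.2]{MR2641539}, and the equivalence $W^{1,\infty}(\Omega)=C^{0,1}(\bar\Omega)$ of \cite[Theorem 4.1]{MR2177410}—gives $\bar y\in W^{1,\infty}(\Omega)$. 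On the other hand, Theorem~\ref{thm:H^2-reg-p} provides $\bar p\in C^{0,1}(\bar\Omega\setminus\cup_{t\in\mathcal{E}}B_{t})$. Consequently $\phi=\alpha^{-1}\bar y\bar p$ is a product of two bounded $W^{1,\infty}$ functions on $\Omega\setminus\cup_{t\in\mathcal{E}}\bar B_{t}(\rho_{t})$, hence $\phi\in W^{1,\infty}(\Omega\setminus\cup_{t\in\mathcal{E}}\bar B_{t}(\rho_{t}))$, and the projection formula together with the Lipschitz continuity of $\Pi_{[\mathtt a,\mathtt b]}$ delivers the first claim $\bar u\in W^{1,\infty}(\Omega\setminus\cup_{t\in\mathcal{E}}\bar B_{t}(\rho_{t}))$.

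For the global statement ($d=2$, $\mathtt a>0$) the remaining difficulty is the behavior of $\phi$ near each singular point $t\in\mathcal{E}$, where $\bar p$ fails to be bounded. The key is the local decomposition of the adjoint state: near $t$ one has $\bar p(x)=-\tfrac{c_{t}}{2\pi}\log|x-t|+p_{t}^{\mathrm{reg}}(x)$ with $c_{t}=\bar y(t)-y_{t}\neq 0$ (by \eqref{eq:set_mathcal_E}) and $p_{t}^{\mathrm{reg}}$ bounded, so in particular $|\bar p(x)|\lesssim 1+|\log|x-t||$ and $\bar p$ has a sign-definite logarithmic blow-up at $t$. I would then split into two cases. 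If $\bar y(t)\neq 0$, continuity of $\bar y$ and the blow-up of $\bar p$ force $\phi(x)\to+\infty$ or $\phi(x)\to-\infty$ as $x\to t$, so $\phi$ leaves $[\mathtt a,\mathtt b]$ in a punctured neighborhood of $t$ and $\bar u=\Pi_{[\mathtt a,\mathtt b]}(\phi)$ is the constant $\mathtt b$ or $\mathtt a$ there. If $\bar y(t)=0$, then, using $\bar y\in C^{0,\nu}(\bar\Omega)$ (arguing as in the H\"older regularity remark following Theorem~\ref{thm:cont_reg}) with $\bar y(t)=0$ and the logarithmic bound on $\bar p$, we get $|\phi(x)|\lesssim |x-t|^{\nu}\,|\log|x-t||\to 0$; here the hypothesis $\mathtt a>0$ is essential, since it guarantees $0\notin[\mathtt a,\mathtt b]$, whence $\phi<\mathtt a$ near $t$ and again $\bar u\equiv\mathtt a$ there.

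In either case $\bar u$ is locally constant around every $t\in\mathcal{E}$, so $\nabla\bar u=0$ in a neighborhood of the singular points, while $\nabla\bar u\in L^{\infty}$ away from them by the first part; thus $\nabla\bar u\in L^{\infty}(\Omega)$. Since $\bar u$ is continuous on $\bar\Omega$ and $\Omega$ is convex, the equivalence $W^{1,\infty}(\Omega)=C^{0,1}(\bar\Omega)$ of \cite[Theorem 4.1]{MR2177410} yields $\bar u\in C^{0,1}(\bar\Omega)$. I expect the main obstacle to be the rigorous justification of the local expansion of $\bar p$ near $t$ and, above all, the control of the product $\bar y\bar p$ in the degenerate case $\bar y(t)=0$: one must show that the near-singularity contribution is clipped off by the box constraints, which is precisely where the assumption $\mathtt a>0$ enters.
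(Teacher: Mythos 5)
Your proposal is correct and follows essentially the same route as the paper's proof: Lipschitz regularity of $\bar y\bar p$ away from the singular points combined with the projection formula, then near each $t\in\mathcal{E}$ the same two-case analysis (if $\bar y(t)\neq 0$, the blow-up of $\bar p$ pushes $\alpha^{-1}\bar y\bar p$ outside $[\mathtt a,\mathtt b]$, so the control is clipped at a bound; if $\bar y(t)=0$, the Lipschitz/H\"older bound on $\bar y$ against the logarithmic growth of $\bar p$ makes the product vanish at $t$, and $\mathtt a>0$ clips the control at $\mathtt a$). The only differences are cosmetic: the paper differentiates the projection formula via a distributional chain rule, $\nabla\bar u=\alpha^{-1}\nabla(\bar y\bar p)\chi$ \cite[Theorem A.1]{MR1786735}, rather than Lipschitz composition, and it invokes the blow-up result \cite[Lemma 5.1]{MR3973329} and the logarithmic bound \eqref{eq:asymp_behav_p_2} separately, instead of your single log-plus-bounded-remainder expansion of $\bar p$, which is stronger than what is needed but supported by the same references.
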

\begin{proof}
For the sake of simplicity, we assume that $\mathcal{D} = \mathcal{E} = \{t\}$ and omit the subindex $t$ in  $\rho_t$. \EO{We recall that $\mathcal{E}$ is defined in \eqref{eq:set_mathcal_E}. The case $t \not\in \mathcal{E}$ is discussed in Remark \ref{rem:t_notin_E} below.} 

\EO{First, we use \eqref{eq:equiv_proj_form} and \cite[Theorem A.1]{MR1786735} to conclude that $\nabla \bar{u} = \alpha^{-1}\nabla(\bar{y}\bar{p}) \chi$ in the sense of distributions, where $\chi$ denotes the characteristic function of the set $\{x \in \Omega : \mathtt{a} < \alpha^{-1}\bar{y}(x)\bar{p}(x) < \mathtt{b} \}$. Taking this formula into account, we obtain}
\begin{equation}\label{eq:Omega_estimate_Linf}
\| \nabla \bar{u}\|_{L^{\infty}(\Omega)} \leq \alpha^{-1} \left( \| \nabla(\bar{y}\bar{p})\chi\|_{L^{\infty}(\Omega \setminus \bar{B}_{t}(\rho))} + \|\nabla(\bar{y}\bar{p})\chi\|_{L^{\infty}(B_{t}(\rho))} \right),
\end{equation}
\EO{where $B_{t}(\rho)$ is as in the statement of the theorem. Define $\mathfrak{I}:= \| \nabla(\bar{y}\bar{p})\chi\|_{L^{\infty}(\Omega \setminus \bar{B}_{t}(\rho))}$. To control $\mathfrak{I}$, we use the fact that $\Omega$ is a convex polytope and that $f,a(\cdot,0) \in L^{\eta}(\Omega)$, for some $\eta > d$, and apply the regularity results of \cite[Lemma 4.1]{MR3973329} and Theorem \ref{thm:H^2-reg-p} to obtain}
\[
\EO{ \bar{y} \in W^{1,\infty}(\Omega),
 \qquad
 \bar{p}\in H^2(\Omega \setminus \bar{B}_{t}(\rho)) \cap C^{0,1}(\bar{\Omega} \setminus B_{t}(\rho)).}
\]
\EO{Consequently,}
\begin{equation}\label{eq:bound_mathfrakI}
\mathfrak{I} \leq \|\bar{p}\|_{L^{\infty}(\Omega \setminus \bar{B}_{t}(\rho))}\|\nabla \bar{y}\|_{L^{\infty}(\Omega)} + \|\bar{y}\|_{L^{\infty}(\Omega)}\|\nabla \bar{p}\|_{L^{\infty}(\Omega \setminus \bar{B}_{t}(\rho))} < \infty.
\end{equation}
This shows that $\bar u \in W^{1,\infty}(\Omega \setminus \bar{B}_{t}(\rho ))$ in both two and three dimensions and proves the first assertion of the theorem.

\EO{We now bound $\|\nabla(\bar{y}\bar{p})\chi\|_{L^{\infty}(B_{t}(\rho))}$ in \eqref{eq:Omega_estimate_Linf}. We proceed in two cases.}

\EO{\emph{Case 1.} $\bar{y}(t) \neq 0$. Given that $t \in \mathcal{E}$, we have that $0 \neq \bar{y}(t) \neq y_t$. We now use that $\bar{y} \in C(\bar \Omega)$ (cf.~Theorem \ref{thm:W1kappa-regularity}) to conclude that there exists $\varepsilon > 0$ such that $\bar{y} \neq 0$ in $\bar{B}_{t}(\varepsilon)$. Assume that $\varepsilon \in (0,\rho)$. 
We thus have that $\zeta:= \min\left\lbrace |\bar{y}(x)|:x \in \bar{B}_{t}(\varepsilon)\right\rbrace > 0$.  Define $\mathtt{c}:= \max\{|\mathtt{a}|,|\mathtt{b}|\}$ and let $M>\alpha\zeta^{-1}\mathtt{c}$. An application of \cite[Lemma 5.1] {MR3973329} shows that there exists $\delta$ such that $|\bar{p}(x)| \geq M$ for all $x \in \bar{B}_{t}(\delta)$. Assume that $\delta \in (0,\varepsilon)$. From these properties, we can thus deduce that
\[
|\alpha^{-1}\bar{y}(x)\bar{p}(x)| \geq \alpha^{-1}\min\left\lbrace |\bar{y}(x)|:x \in \bar{B}_{t}(\delta)\right\rbrace|\bar{p}(x)| > \alpha^{-1} \zeta (\alpha\zeta^{-1}\mathtt{c}) = \mathtt{c} \quad \forall x \in \bar{B}_{t}(\delta).
\]
Since, by definition, $\mathtt{c} = \max\{|\mathtt{a}|,|\mathtt{b}|\}$, we can conclude that the ball $\bar{B}_{t}(\delta)$ is such that $\chi \equiv 0$ in $\bar{B}_{t}(\delta)$. Recall that $\chi$ is the characteristic function of $\{x \in \Omega : \mathtt{a} < \alpha^{-1}\bar{y}(x)\bar{p}(x) < \mathtt{b} \}$. We thus proceed as follows:
\[
\|\nabla(\bar{y}\bar{p})\chi\|_{L^{\infty}(B_{t}(\rho))} = \|\nabla(\bar{y}\bar{p})\chi\|_{L^{\infty}(B_{t}(\rho) \setminus \bar{B}_{t}(\delta))} \leq
\|\nabla(\bar{y}\bar{p})\chi\|_{L^{\infty}(\Omega \setminus \bar{B}_{t}(\delta))} < \infty.
\]
To control the last term, we have used the regularity results of \cite[Lemma 4.1]{MR3973329} and Theorem \ref{thm:H^2-reg-p} as in the derivation of \eqref{eq:bound_mathfrakI}.}

\EO{\emph{Case 2.} $\bar{y}(t) = 0$. Given that $t \in \mathcal{E}$, we have $0 = \bar{y}(t) \neq y_t$. In this case, we assume $d = 2$ and $\mathtt{a} > 0$. We begin the analysis with the following asymptotic behavior of $\bar{p}$ near $t$ \cite{MR161019,MR657523,MR1156467,MR3169756,MR3941634,MR431753}:}
\begin{equation}\label{eq:asymp_behav_p_2}
\EO{|\bar{p}(x)| \lesssim |\log |x-t|| + 1.}
\end{equation}
We now use that $\bar{y}(t) = 0$, $\bar{y} \in W^{1,\infty}(\Omega)$, and that $W^{1,\infty}(\Omega) = C^{0,1}(\bar \Omega)$ \cite[Theorem 4.1]{MR2177410} to deduce that
\begin{equation}\label{eq:lim_1_C_reg}
|\bar{y}(x)\bar{p}(x)| = \EO{|(\bar{y}(x) - \bar{y}(t)) \bar{p}(x)|} \lesssim |x-t| \cdot (|\log |x-t||+1) \rightarrow 0,  \quad x \rightarrow t.
\end{equation}
Since $\mathtt{a}>0$, this shows the existence of $\delta>0$ such that $\alpha^{-1}|\bar{y}(x)\bar{p}(x)| < \mathtt{a}$ for all $x \in B_t(\delta)$. Consequently, $B_t(\delta)$ is such that $\chi \equiv 0$ in $B_t(\delta)$ and thus $\|\nabla(\bar{y}\bar{p})\chi\|_{L^{\infty}(B_{t}(\delta))} = 0$. Assume that $\delta \in (0,\rho)$. The term $\|\nabla(\bar{y}\bar{p})\chi\|_{L^{\infty}(B_{t}(\rho) \setminus B_{t}(\delta))}$ is controlled as before. As a result, we obtain that $\|\nabla(\bar{y}\bar{p})\chi\|_{L^{\infty}(B_{t}(\rho))}$ is bounded.

Having deduced that $\|\nabla(\bar{y}\bar{p})\chi\|_{L^{\infty}(B_{t}(\rho))} < \infty$, we conclude that $\|\nabla(\bar{y}\bar{p})\chi\|_{L^{\infty}(\Omega)}$ is bounded. The fact that $W^{1,\infty}(\Omega) = C^{0,1}(\bar \Omega)$ allows the conclusion to be drawn.
\end{proof}

\begin{remark}[$t \notin \mathcal{E}$]\label{rem:t_notin_E}
\EO{Let us illustrate the case where $t \not\in \mathcal{E}$ with two specific examples: $\mathcal{D} = \{ t \}$ and $\mathcal{D} = \{ t_1 ,t_2 \}$.}
\begin{itemize}
\item[(i)] \EO{Let $\mathcal{D} = \{ t \}$, where $t$ is such that $t \notin \mathcal{E}$, i.e., $\bar{y}(t) = y_t$. Consequently, $\bar{p} \equiv 0$ in $\Omega$ and $\bar{u}$ is a constant in $\Omega$.}
\item[(ii)] \EO{Let $\mathcal{D} = \{ t_1 ,t_2 \}$, where $t_1 \in \mathcal{E}$ and $t_2 \notin \mathcal{E}$, i.e., $\bar{y}(t_1) \neq y_{t_1}$ and $\bar{y}(t_2) = y_{t_2}$. In this setting, we can deduce that
\[
\bar{y} \in W^{1,\infty}(\Omega),
\qquad
\bar{p}\in H^2(\Omega \setminus \bar{B}_{t_1}(\rho_1)) \cap C^{0,1}(\bar{\Omega} \setminus B_{t_1}(\rho_1)).
\]
As a result, $\bar{y} \bar{p} \in C^{0,1}(\bar{B}_{t_2}(\rho_2))$ and $\bar{u}$ is regular in a neighborhood of $t_2$.}
\end{itemize}
\end{remark}

\bigskip

\textbf{Data availability.} No data was used for the research described in the article.

\bigskip

\EO{\textbf{Acknowledgments.} We would like to express our sincere gratitude to the anonymous referees and the editor for providing insightful comments and suggestions, which led to better results and an improved presentation.
}

\bibliographystyle{siamplain}
\bibliography{bil_track_ref}

\end{document}